\documentclass[12pt]{article}
\usepackage{eurosym}
\usepackage{amssymb}
\usepackage{amsfonts}
\usepackage{amsmath}
\usepackage{graphicx,epsf}
\usepackage{enumerate}
\usepackage{url}
\usepackage{bbm}
\usepackage{xcolor}
\usepackage{comment}
\usepackage[utf8]{inputenc}
\usepackage[T1]{fontenc}

\usepackage{arydshln}

\RequirePackage[colorlinks,citecolor=blue,urlcolor=blue]{hyperref}
\newcommand{\bomega}{\boldsymbol{\omega}}

\newcommand{\bgamma}{\boldsymbol{\gamma}}

\newcommand{\bepsilon}{\boldsymbol{\epsilon}}

\newcommand{\blambda}{\boldsymbol{\lambda}}

\newcommand{\btheta}{\boldsymbol{\theta}}

\newcommand{\bTheta}{\boldsymbol{\Theta}}
\newcommand{\bphi}{\boldsymbol{\phi}}
\newcommand{\bPhi}{\boldsymbol{\Phi}}
\newcommand{\bPi}{\boldsymbol{\Pi}}
\newcommand{\bpsi}{\boldsymbol{\psi}}

\newcommand{\bPsi}{\boldsymbol{\Psi}}

\newcommand{\brho}{\boldsymbol{\rho}}
\newcommand{\bzero}{\boldsymbol{0}}

\newcommand{\bmu}{\boldsymbol{\mu}}
\newcommand{\bxi}{\boldsymbol{\xi}}
\newcommand{\bDelta}{\boldsymbol{\Delta}}
\newcommand{\bSigma}{\boldsymbol{\Sigma}}

\newcommand{\bd}{\boldsymbol{d}}

\newcommand{\bE}{\boldsymbol{E}}

\newcommand{\bA}{\boldsymbol{A}}
\newcommand{\bB}{\boldsymbol{B}}
\newcommand{\bC}{\boldsymbol{C}}

\newcommand{\bD}{\boldsymbol{D}}

\newcommand{\bI}{\boldsymbol{I}}
\newcommand{\bJ}{\boldsymbol{J}}
\newcommand{\bP}{\boldsymbol{P}}

\newcommand{\bs}{\boldsymbol{s}}
\newcommand{\bS}{\boldsymbol{S}}

\newcommand{\bV}{\boldsymbol{V}}

\newcommand{\bv}{\boldsymbol{v}}

\numberwithin{equation}{section}
\usepackage{amsthm}
\theoremstyle{definition}

\newtheorem{thm}{Theorem}[section]
\newtheorem{example}{Example}
\newtheorem{ass}{Assumption}
\newtheorem{rem}{Remark}

\theoremstyle{plain} 
\newtheorem{prop}{Proposition}[section]
\newtheorem{lem}{Lemma}[section]

\usepackage{bm}

\providecommand{\skakko}[1]{\left(#1\right)}
\providecommand{\mkakko}[1]{\left\{#1\right\}}

\usepackage{natbib}
\allowdisplaybreaks

\begin{document}
%\vspace*{-1cm}
\title{Mixed difference integer-valued GARCH model \\for $ \mathbb{Z}$-valued time series}
\author{Abdelhakim~Aknouche\thanks{%
			Qassim University, e-mail: aknouche\_ab@yahoo.com}
            , \ Christian~Francq\thanks{%
			CREST and University of Lille, e-mail: christian.francq@ensae.fr} \ and \ Yuichi~Goto\thanks{%
			  Kyushu University, e-mail: yuichi.goto@math.kyushu-u.ac.jp}
              \thanks{The three authors contributed equally to the paper and are listed alphabetically}}
\date{}
              
\maketitle
\vspace*{-1cm}
\begin{abstract}
In this paper, we introduce flexible observation-driven $\mathbb{Z}$-valued time series models constructed from mixtures of negative and non-negative components. Compared to models based on the standard Skellam distribution or on a difference of two integer-valued variables, our specification offers greater versatility. For example, it easily allows for skewness and bimodality. Furthermore, the observation of one component of the mixture makes interpretation and statistical analysis easier.
We establish conditions for stationarity and mixing, and develop a mixed Poisson quasi-maximum likelihood estimator with proven asymptotic properties. A portmanteau test is proposed to diagnose residual serial dependence. The finite-sample performance of the methodology is assessed via simulation, and an empirical application on tick prices demonstrates its practical usefulness.
\end{abstract}
	\noindent  {\it  JEL Classification:}  C22, C12 and C13

\nopagebreak[4]\noindent {\it Keywords}:   Discrete difference distribution; GARCH for tick-by-tick data, Mixed difference;  Mixed Poisson QMLE;   Random-weighting bootstrap; $\mathbb{Z}$-valued time series.\\
\section{Introduction}

Signed integer-valued ($\mathbb{Z}$-valued) time series are common in applications such as finance (price changes), macroeconomics (interest rates in discrete steps), credit rating (agencies rate entities on a discrete scale), ecology (temperatures),  sports (score differences) and sometimes result from differenced count series.

Existing modeling approaches for $\mathbb{Z}$-valued time series fall into three categories.
%	, each with drawbacks.
The first category relies on stochastic models involving signed thinning operators, based on modifications of the INAR model \citep{kp08, kt11, ao14} or integer-valued random coefficient autoregressions \citep{ags23}.
However, these models lead to complex likelihoods involving intractable convolutions, making estimation more difficult.
The second approach is based on parameter-driven (or state-space)
formulations \citep{rs03,lnp06, s11, sm14, kll17,bk18}. Although these formulations are flexible, latent dynamics render the likelihood intricate,  which makes estimation computationally demanding and complicates prediction.
The third approach is based on observation-driven GARCH-like representations,  
specifically, by multiplying non-negative INGARCH models by a sign selector, which is an independent and identically distributed (i.i.d.)\ Bernoulli or i.i.d.\ three-point random variable that takes values in $\{-1,0,1\}$ (see \cite{ha21, xz22, lcz24}). The laws of the innovations of these time series models are difference distributions, such as the standard Skellam distribution  \citep{i37,s46} or the difference of two more general independent integer-valued variables. 
Unlike parameter-driven models, this approach offers mathematical tractability. In particular, the conditional likelihood is available, and prediction is straightforward. Therefore, this paper focuses on this approach, but we adopt a flexible semi-parametric approach and we relax the independence assumption on the two components of the difference distribution (goals in a football match are not independent) as well as the restrictive assumption of an independent sign selector.

In this paper, we propose a highly flexible framework for $\mathbb{Z}$-valued time series: the dynamic Mixed Difference INGARCH (MD-INGARCH) model. Our approach generates a series as a mixed difference between two non-negative components, with a Bernoulli selector determining which component is active at each time. The core innovation of our model is that the Bernoulli selector sequence itself can be non-i.i.d.\  (e.g., following its own INGARCH process). This crucial feature enables the conditional transition probability of the series' sign to evolve over time, a level of realism unattainable by existing models based on an i.i.d.\ sign selector. Furthermore, unlike usual existing models that imply symmetry or unimodality, our framework naturally accommodates asymmetric and multimodal conditional distributions. Finally, we adopt a semi-parametric approach. This means that we do not assume that the conditional distributions of the two INGARCHs in the mixed difference are Poisson-distributed or follow any other specific distribution. Thanks to estimation by a quasi-maximum likelihood approach in particular, our results remain valid for a wide range of unspecified conditional distributions.

In financial econometrics, modeling high-frequency price changes is complicated by their discrete integer-valued nature and the well-documented asymmetry in volatility dynamics--often termed the ``leverage effect''. Existing models either ignore the discreteness (continuous-valued GARCH models) or impose restrictive independence assumptions on price movement directions and do not directly model the volatility\footnote{Standard count time series models are called ``INGARCH'' but this terminology is misleading because, unlike GARCH models, these models are for the conditional mean rather than the conditional variance (which is determined by the conditional distribution and the conditional mean)}. We address this issue by introducing a model that captures both magnitude dynamics and the state-dependent probability of price increases (i.e., tick-by-tick price variations), providing a unified framework for analyzing market microstructure and volatility clustering.

The paper is organized as follows.
	Section~\ref{sec:2} formally introduces the MD-INGARCH model, highlighting its advantages over existing approaches.
	Section~\ref{sec:3} establishes conditions for the existence of stationary and ergodic solutions, and for the beta-mixing property.
	Section~\ref{sec:4} proposes a mixed Poisson quasi-maximum likelihood estimator and shows its strong consistency and asymptotic normality.
	A portmanteau test  with bootstrap improvements  for model diagnostics is also developed.
Finite-sample performance is investigated via simulation in Section~\ref{sec:6}.
The model's practical utility is demonstrated through a real-data application in Section~\ref{sec:7}. Section~\ref{Conclusion} concludes by focusing on applications in financial econometrics.
	All proofs are collected  in 
    %Section~\ref{sec:8} of the 
    a supplementary file, and any reference to this file begins with the letter ‘S’.

\section{A mixed difference INGARCH model}\label{sec:2}
In response to the increased demand for modeling dependent discrete processes, the study of non-negative integer-valued time series has become widespread. Popular models include integer-valued autoregressive (INAR) models  (see for example \citealp{m85,aa87}) and integer-valued generalised autoregressive conditional heteroscedasticity (INGARCH) models (see for example \citealp{flo06}). More recently, hierarchical models have been proposed (\citealp{cm21}), with further developments detailed in \cite{ak23}, \cite{af23}, 
\cite{ag24},
\cite{gf25}, among others.
In this section, we introduce a new class of $\mathbb{Z}$-valued time series models based on the concept of mixed difference.

\subsection{Model}
Let $F_{\lambda}^1$ and $F_{\lambda^*}^2$ denote distribution functions supported on $\mathbb{N}_{0}=\left\{ 0,1,...\right\} $ and $\mathbb{N} =\left\{
1,2,...\right\}$, respectively, with means $\lambda>0$ and $\lambda^*>1$, respectively. Let $\left\{ B_{t},t\in \mathbb{Z} \right\} $
be a $\{0,1\}$-valued sequence. We will define a $\mathbb{Z} $-valued
process $\left\{ Y_{t},t\in \mathbb{Z}\right\} $ whose sign is positive if $%
\{B_t=1\}$ and negative if $\{B_t=0\}$. Let $\mathcal{F}_{t}$ denote the
sigma field generated by $\left\{Y_{u},u\leq t\right\}$. Let $%
\pi_t=P(B_t=1\mid \mathcal{F}_{t-1})$.
The process $\left\{ Y_{t},t\in \mathbb{Z}\right\} $ is said to follow a 
\textit{dynamic mixed difference} INGARCH (MD-INGARCH) model if for each $%
t\in \mathbb{Z}$%
\begin{equation}
Y_{t}=B_{t}X_{1t}-\left( 1-B_{t}\right) X_{2t},\quad X_{st}\mid 
\mathcal{F}_{t-1}\sim F_{\lambda _{st}}^{s},\quad s=1,2,  \label{3.2a}
\end{equation}%
where%
\begin{equation}
\lambda _{st}=\omega _{s}+\sum_{i=1}^{q}\alpha _{si}\left\vert
Y_{t-i}\right\vert +\sum_{j=1}^{p}\beta _{sj}\lambda _{s,t-j},\quad s=1,2,
\label{3.2b}
\end{equation}%
and $\omega _{1}>0,\ \alpha _{si}\geq 0,$ $\beta _{sj}\geq 0
$. 
The condition $\omega_2>1$ suffices to ensure $\lambda _{2t}>1$ a.s.\ for any $t$. 
We however impose the weaker condition $
0<1-\sum_{j=1}^{p}\beta_{2j}<\omega_{2}$ since
\begin{align*}
\lambda _{2t}
=&\ \omega_{2}
+\sum_{i=1}^{q}\alpha _{2i}\left\vert
Y_{t-i}\right\vert +\sum_{j=1}^{p}\beta _{2j}\lambda _{2,t-j}
\geq
\omega_{2}
+\sum_{j=1}^{p}\beta _{2j}\lambda _{2,t-j},
\end{align*}
and then, denoting by $B$ the backshift operator
\begin{align*}
(1-\sum_{j=1}^{p}\beta_{2j}B^j)
\lambda _{2t}
\geq
\omega_{2}
\quad
\Rightarrow
\quad
\lambda _{2t}
\geq
\frac{\omega_{2}}{1-\sum_{j=1}^{p}\beta_{2j}},
\end{align*}
which entails
$\lambda _{2t}>1$ a.s.
In the case $p=q=1$, we will use the simplified notation $\alpha_s=\alpha_{s1}$ and  $\beta_s=\beta_{s1}$.
It is assumed that $B_{t}$ and $X_{st}$ ($s=1,2$) are conditionally
independent given $\mathcal{F}_{t-1}$.
The conditional mean and variance are then
\begin{align*}
E\left( Y_{t}|\mathcal{F}_{t-1}\right)
=&\pi _{t}\lambda _{1t}-\left(
1-\pi _{t}\right) \lambda _{2t},\\
{\rm Var}\left( Y_{t}|\mathcal{F}_{t-1}\right)
=&\pi _{t}{\rm Var}\left( X_{1t}|\mathcal{F}_{t-1}\right)
+\left(
1-\pi _{t}\right){\rm Var}\left( X_{2t}|\mathcal{F}_{t-1}\right)
+\pi_{t}\left( 1-\pi
_{t}\right)\left(\lambda _{1t}+ \lambda
_{2t}\right) ^{2}.
\end{align*}
Let us give some examples of $F_{\lambda}^1$, $F_{\lambda}^2$, and $\{B_t\}$.
\begin{example}[Poisson MD-INGARCH model]\label{ex:Pois}
Prominent examples of $F_{\lambda}^1$ and $F_{\lambda}^2$ are the cdfs of the Poisson distribution $\mathcal{P}\left(
\lambda _{1t}\right) $ and the\textbf{\ }(right) shifted Poisson
distribution $\mathcal{SP}\left( \lambda _{2t}\right) $, respectively. 
The conditional distribution of $Y_{t}$ is given by%
\begin{equation}
Y_{t}|\mathcal{F}_{t-1}\sim \pi _{t}\mathcal{P}\left( \lambda _{1t}\right)
+\left( 1-\pi _{t}\right) _{-}\mathcal{SP}\left( \lambda _{2t}\right) ,
\label{$3.4$}
\end{equation}%
where $_{-}\mathcal{SP}\left( \lambda \right) $\ stands for the cdf of a
negative shifted Poisson variable $Z:=-X-1$ with mean $-\lambda $  and
probability mass function (pmf)
\begin{eqnarray*}
f_{Z}\left( z\right)  
&=&e^{-\lambda +1}\frac{(\lambda -1)^{-z-1}}{\left( -z-1\right) !},\quad
z\in \left\{ ...,-2,-1\right\} \text{,}
\end{eqnarray*}%
$X=-Z-1\sim \mathcal{P}\left( \lambda -1\right) $ being Poisson distributed
with $\lambda >1$.\qed
\end{example}

\begin{example}[mixed Poisson MD-INGARCH]\label{ex:mixedPois}
Other examples of $F_\lambda^1$ and $F_{\lambda}^2$ are, for given a mixing cumulative distribution function $G$ on $[0,\infty)$, the associated mixed Poisson distribution defined by the masses
$$p_k=\int_0^{\infty}\frac{(\lambda x)^k}{k!}e^{-\lambda x}dG(x),\quad k=0,1,\dots,$$ and its shifted distribution, respectively. 
Particularly, if we choose $G$ as a c.d.f.\ of Gamma distribution with shape $r$ and scale $p/(1-p)$, the mixed Poisson distribution is a negative binomial distribution $\mathcal{NB}(r,p)$, where
$r$ is the number of successes and $p$ is the probability of success on each trial.
Mixed Poisson distributions are often used in insurance \citep[see][]{willmot2001mixed} and have already been used for count time series models \citep[see][]{christou2015estimation, barreto2016general}.
\qed
\end{example}

\begin{example}[i.i.d.\ Bernoulli sequences]\label{ex:iid}
Obvious example of $\{B_t\}$ is i.i.d.\ Bernoulli sequences.\qed
\end{example}

\begin{example}[Bernoulli INGARCH model]\label{ex:BINGARCH}
An important example of $\{B_t\}$ is the Bernoulli INGARCH$\left(1,1\right)$ model
\begin{align}\label{demai}
P\left( B_{t}=1\mid \mathcal{F}_{t-1}^{B}\right)
=\pi _{t}, \quad
\pi _{t}
=c+aB_{t-1}+b\pi _{t-1},\quad t\in \mathbb{Z},
\end{align}
where $\mathcal{F}_{t}^{B}=\sigma \left\{ B_{t-u},u\geq 0\right\} $ denotes
the $\sigma $-algebra generated by $\left\{ B_{t-u},u\geq 0\right\} $ and $c,
$ $a$ and $b$ are constants satisfying $c>0$, $a\geq 0$, $b\geq 0$ with $a+b+c<1$ (see, e.g., \citealp{dk17}).
Note that $\mathcal{F}_{t-1}^B\subset \mathcal{F}_{t-1}$, and that $B_{t}$ and $\mathcal{F}_{t-1}$ are conditionally independent
given $\mathcal{F}_{t-1}^{B}$ in the sense that
$$P\left( B_{t}=1\mid \mathcal{F}_{t-1}\right) =P\left( B_{t}=1\mid \mathcal{F}_{t-1}^{B},\mathcal{F}_{t-1}\right) 
=P\left( B_{t}=1\mid \mathcal{F}_{t-1}^{B}\right) .$$ Since the Bernoulli
distribution belongs to the one-parameter exponential family, Theorems 3.1--3.2 in \cite{af21} ensure that $\{B_{t}\}$ is strictly
stationary, ergodic and beta-mixing. % under the condition $a+b<1$, which is already satisfied by model's definition. 
Note that the INGARCH sequence 
$\{B_{t}\}$ is just i.i.d.\ when $a=b=0$ with $\pi _{t}=c$ and reduces to a Markov chain when $b=0$ with $\pi _{t}=P\left( B_{t}=1\mid B_{t-1}\right)
=c+aB_{t-1}$. \qed
\end{example}

\begin{rem}[Link with the threshold ARCH model]\label{mele}
The MD-INGARCH model shares similarities with asymmetric GARCH models, particularly the threshold ARCH (TARCH) model proposed by \cite{zakoian1994threshold} and  expanded upon by \cite{pan2008estimation}. The TARCH model breaks down the symmetric effect of positive and negative shocks on the current volatility of the standard GARCH model introduced by \cite{bollerslev1986generalized}.
For simplicity assume that $B_t$ follows a Bernoulli ARCH(1), {\em i.e.} $b=0$ in \eqref{demai}. With an appropriate choice of the parameters, the  MD-INGARCH conditional mean 
$E(Y_t\mid {\cal F}_{t-1})=\lambda_{1t}\pi_t-\lambda_{2t}(1-\pi_t)$ can be (nearly) zero, as is the case for the TARCH.  Its volatility can then be measured by
$$E(|Y_t|\mid {\cal F}_{t-1})=c\lambda_{1t}+(1-c)\lambda_{2t}+B_{t-1} a\left(\lambda_{1t}-\lambda_{2t}\right).$$  When applying the model to price increments, it is often observed that volatility increases more after a price drop  ({\em i.e.} $B_{t-1}=0$) than after an increase ({\em i.e.} $B_{t-1}=1$) of the same magnitude. This stylized fact of financial series can be accounted for by setting $\lambda_{2t}>\lambda_{1t}$ (see the empirical application of Section~\ref{sec:7}). An important difference with TARCH models, however, is that the MD-INGARCH is applied to discretized data.

This connection to the TARCH model has important implications for financial econometrics. First, it allows us to quantify the asymmetric impact of past returns on future volatility in a setting where price changes are constrained to integer values—a common feature of tick-by-tick data that continuous-valued models ignore. Second, by modeling $E(|Y_t|\mid\mathcal{F}_{t-1})$ explicitly, we can test hypotheses about leverage effects (e.g.,  $\lambda_{2t} > \lambda_{1t}$) using standard Wald or likelihood ratio tests. Third, the Bernoulli INGARCH structure for $\pi_t$ provides a direct estimate of the time-varying probability of a price increase, which is of independent interest in market microstructure research (e.g., predicting order flow imbalance).

\end{rem}

Figure \ref{fig:tsplot} illustrates time series plots generated from the following Poisson MD-INGARCH model with a Bernoulli INGARCH structure:
\begin{align*}
Y_{t}|\mathcal{F}_{t-1}\sim \pi _{t}\mathcal{P}\left( \lambda _{1t}\right)
+\left( 1-\pi _{t}\right) _{-}\mathcal{SP}\left( \lambda _{2t}\right)
\text{ and }
B_{t}|\mathcal{F}_{t-1}^B\sim{\rm Ber}(\pi_t)
\end{align*}
where
\begin{align*}
\lambda _{st}=\omega _{s}+0.3\left\vert
Y_{t-1}\right\vert +0.3\lambda _{s,t-1}
\text{ for $s=1,2$ and }
\pi _{t} =c+aB_{t-1}+b\pi _{t-1}
\end{align*}
with $\omega_1=1$ and $\omega_2=4$.
When $a=b=0$, $B_{t}$ reduces to a Bernoulli random variable with a constant success probability $c$ (Example \ref{ex:iid}). 
In this case, the sign transitions of the time series are random, as shown in the left panels.
Conversely, the right panels depict the case of $(a,b)\neq(0,0)$ (Example \ref{ex:BINGARCH}).
In the bottom-right panel, when $B_{t-1}=1$ (resp. $0$), the conditional sign transition probability is  $0.9$ (resp. $0.1$), demonstrating a strong dependence on past values. 
The top-right panel illustrates a more complex scenario where the transition probability depends not only on $B_{t-1}$ but also on $\pi_{t-1}$.
\begin{figure}[htbp]
\begin{center}
\includegraphics[width=\linewidth]{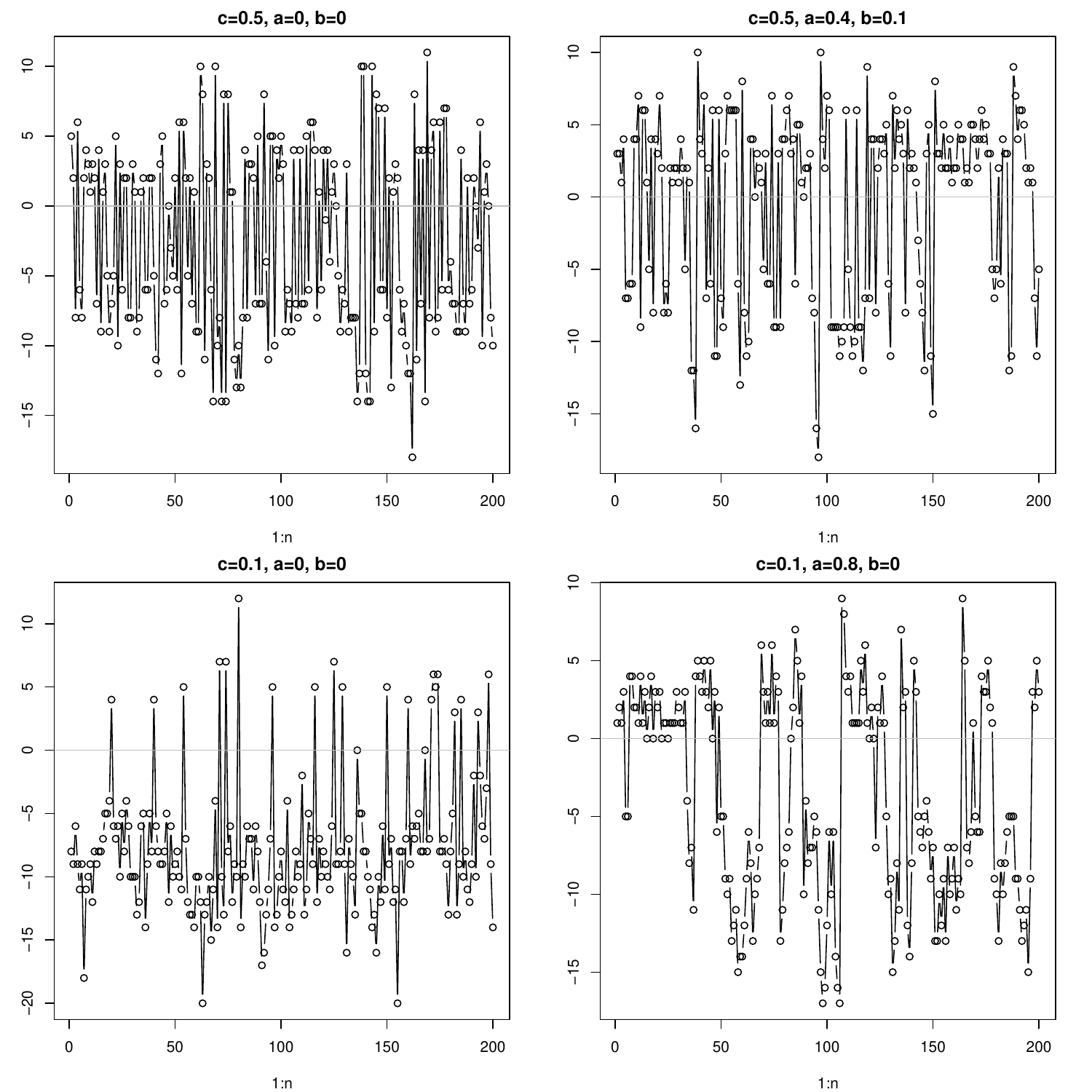}
\caption{Plots of time series generated by the MD-INGARCH model with $n=200$.
The left panels correspond to the case where $(a,b)=(0,0)$, indicating random transitions of the sign of the time series.
The right panels correspond to the case where $(a,b)\neq(0,0)$, where the conditional transition probability of the sign depends on past values, demonstrating more structured dynamics.
}\label{fig:tsplot}
\end{center}
\end{figure}

Figure \ref{fig:density} displays pmfs for several densities. 
Top-left panel corresponds to the pmf for the Skellam distribution with parameters $\lambda_1$ and $\lambda_2$, reflecting the modeling approach in \cite{aao18} and \cite{clz21}.
For $\lambda_1=\lambda_2$, the distribution is symmetric.
When $\lambda_1<\lambda_2$, the distribution is right-skewed, and when $\lambda_1>\lambda_2$, it is left-skewed. 
The Skellam family is relatively simple and lacks flexibility in representing complex shapes.
The other panels correspond to mixtures of non-negative and negative distributions, illustrating our proposed modeling approach.
Specifically, we use Poisson and negative binomial distributions for non-negative values, and negatively shifted Poisson and negative binomial distributions for negative values, combined with various mixing ratios.
Our proposed modeling framework offers extensive flexibility, enabling the representation of distributions with features such as skewness and bimodality.
\begin{figure}[htbp]
\begin{center}
\includegraphics[width=\linewidth]{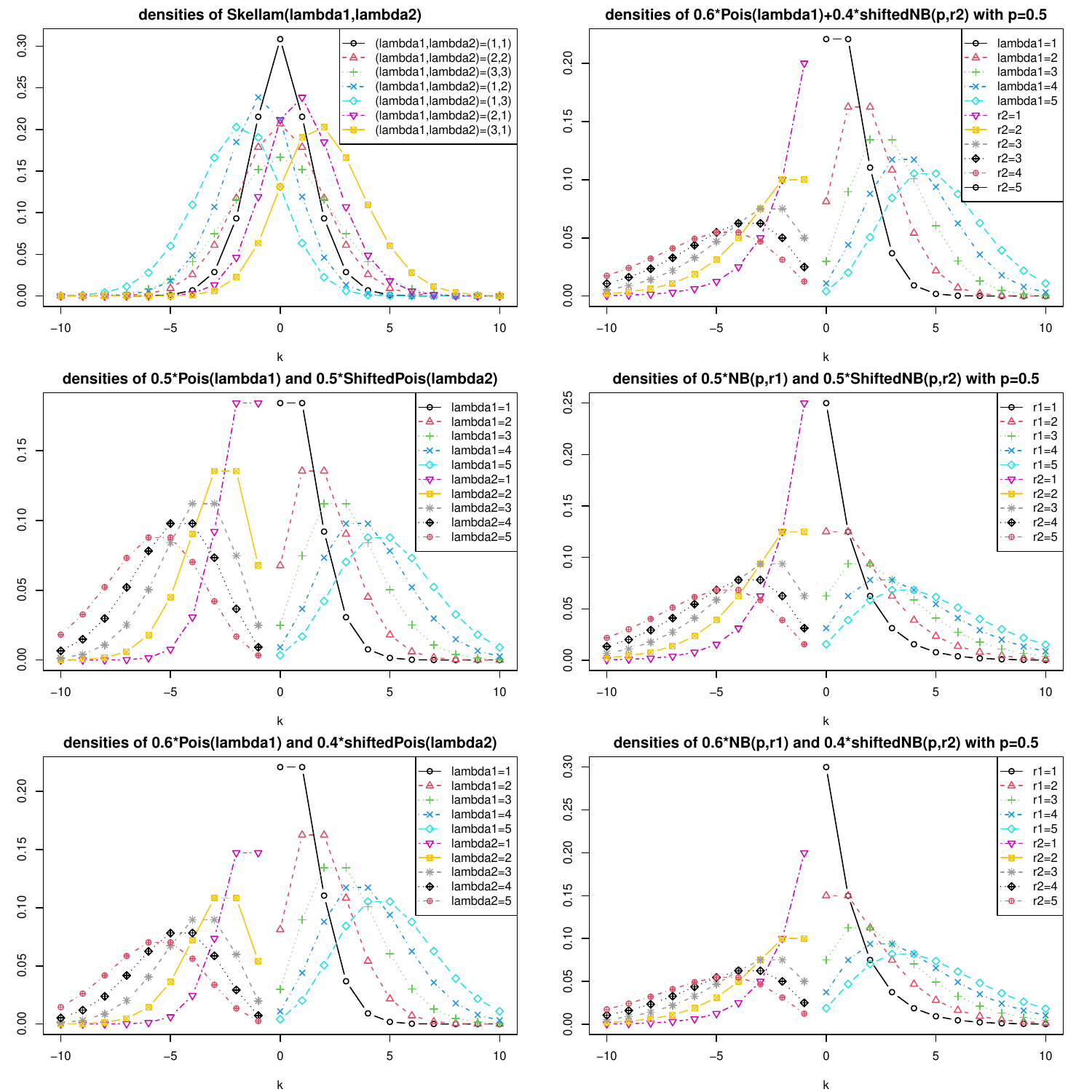}
\caption{Probability mass functions for various distributions. 
The top-left panel shows the Skellam distribution, corresponding to the approach in  \cite{aao18,clz21}, while the other panels show mixtures of positive and negative distributions with different mixing ratios, corresponding to our approach.
}\label{fig:density}
\end{center}
\end{figure}
%%%%%%%%%%%%%%%%%%%%%%%%%%%%%%%%%%%%%%%%%%%%%%%%%%%%%%%%%%%%%%%%%%%%%%%%%

\subsection{Mixed difference versus difference}
%This section explains our choice of a randomization approach over superposition.
This section explains our choice of a mixture-based approach over a direct difference approach.
Let $X_{1}$ and $X_{2}$ be two integer-valued random variables not
necessarily independent and valued in 
$\mathbb{N}_{0}$ and $\mathbb{N}$, respectively, with pmfs 
$f_{X_{i}}(\cdot)$. Let also $B$ a
Bernoulli random variable independent of $X_{1}$ and $X_{2}$ with parameter $\pi \in \left( 0,1\right) $.
Define the $\mathbb{Z%
}$-valued random variable $Y$ by
$Y := BX_{1}-\left( 1-B\right) X_{2}$.
Then, the pmf is given, for each $z\in \mathbb{Z}$, by
\begin{align}\label{eq:densityY}
f_{Y}\left( z\right)
=
{\rm P}\left(
Y=z\right) 
=
\left\{ \pi f_{X_{1}}\left( z\right) \right\}^{\mathbbm{1}_{z\geq 0
}}\left\{ \left( 1-\pi \right) f_{X_{2}}\left( -z\right) \right\}^{\mathbbm{1}_{z<0}}.
\end{align}
In other words, $Y\geq 0$ if and only if $B=1$ (i.e. $Y<0\ $ if and only if $B=0$), so $B$ is observable (and indicates the sign of $Y$).
Thus $Y$ is a mixture of $X_{1}$ and $-X_{2}$, which we call ``\textit{mixed difference}'' between $X_{1}$ and $X_{2}$. 

The mixed difference can be considered as an alternative to the standard difference $X_{1}-X_{2}$ 
which superposes $X_{1}$ and $-X_{2}$. 
Compared to the convolution $X_{1}-X_{2}$, the mixed difference $BX_{1}-\left( 1-B\right) X_{2}$ has a
much simpler distribution given by \eqref{eq:densityY} without the need to assume the
independence between $X_{1}$ and $X_{2}$. In particular, many distributions
for $X_{1}$ and $X_{2}$ are allowed, such as Poisson, negative binomial,
generalized Poisson, and even bounded-valued discrete distributions such as
Binomial and beta-Binomial.
%, and also multivariate distributions such as bivariate and multivariate Poisson distributions.
Note that the fact that $B$ is observable when $Y$ is observed greatly simplifies the estimation of the distribution of $B$ from data 
and thus the parameters of the distributions of $X_{1}$ and $X_{2}$.
From the definition of $Y$, $X_{1}$ and $X_{2}$ are not allowed to be equal because they do not assume the same support.
This assumption is made to make the supports of $X_{1}$ and $X_{2}$
forming a partition of $\mathbb{Z}$, which in turn will imply $B$
observable when $Y$ is. 
Otherwise, that is, when $X_{1}$ and $X_{2}$ are
both $\mathbb{N}_{0}$-valued then $B$ is not observable when $Y=0$ because such a value can come from both $X_{1}$ and $-X_{2}$. 
Note finally that $Y=\left(2B-1\right) X$ has a symmetric distribution on $\mathbb{Z}$ when $\pi=0.5$.
For the above reasons, we propose a mixture-based approach instead of a superposition of two opposite discrete independent random variables.

\section{Properties of the MD-INGARCH model}\label{sec:3}
This section presents the theoretical properties of the MD-INGARCH model.
Let $F_{\lambda }$ be a discrete cumulative distribution function (cdf) with
mean $\lambda =\int_{0}^{\infty }xdF_{\lambda }\left( x\right)$. 
In \cite{af21}, $F_{\lambda }$ is said to satisfy a
``stochastic-equal-mean order property'' if 
\begin{equation}  \label{3.1}
\lambda \leq \lambda ^{\ast }\quad \Rightarrow \quad F_{\lambda }^{-}(u)\leq
F_{\lambda ^{\ast }}^{-}(u),\;\forall u\in (0,1),
\end{equation}%
where $F_{\lambda }^{-}$ is the generalized inverse of $F_{\lambda }$.
Denote by $\mathbb{F}$ the class of such cdfs. 
Not only Poisson in Example \ref{ex:Pois} and mixed Poisson distributions in Example \ref{ex:mixedPois} satisfy \eqref{3.1}, but also all members of the one-parameter exponential family (e.g., Poisson, negative binomial, ...), the double Poisson, the negative binomial distribution with time-varying number of failures, the zero-inflated Poisson distribution, the zero-inflated negative binomial distribution, and mixed distributions of any distribution satisfying \eqref{3.1} enjoy the property \eqref{3.1}.
See \cite{af21} for the details.

In contrast to fully parametric $\mathbb{Z}$-valued time series models---such as the INGARCH model of \cite{ha21} and the GZG model of \cite{xz22}---our approach is semi-parametric. We model the conditional means $\lambda_{st}$ parametrically, but make only minimal assumptions ({\em i.e.} $F_{\lambda}^s \in \mathbb{F}$) about the form of the conditional distributions.

Assume there exist constants $\pi_1^+\in (0,1)$ and $\pi_0^+\in (1-\pi_1^+,1)
$ such that 
\begin{equation}  \label{fille}
\pi_t\leq \pi_1^+\qquad\mbox{ and }\qquad 1-\pi_t\leq \pi_0^+\quad\mbox{ almost surely (a.s.).}
\end{equation}
Set $r=\max \left( p,q\right) $ and consider the matrix $\boldsymbol{A}%
^{\left( l\right) } $ given by%
\begin{equation*}
\boldsymbol{A}^{\left( l\right) }=\left( 
\begin{array}{cc}
\alpha_{1l}\pi_1^++\beta_{1l} & \alpha_{1l}\pi_0^+ \\ 
\alpha_{2l}\pi_1^+ & \alpha_{2l}\pi_0^++\beta_{2l}%
\end{array}%
\right) ,\quad 1\leq l\leq r\text{.}
\end{equation*}%
Let $\boldsymbol{A} $ be defined as%
\begin{equation*}
\boldsymbol{A} =\left( 
\begin{array}{ccccc}
\boldsymbol{A}^{\left( 1\right) } & \boldsymbol{A}^{\left( 2\right) } & 
\cdots & \boldsymbol{A}^{\left( r-1\right) } & \boldsymbol{A}^{\left(
r\right) } \\ 
\bI_{2} & 0_{2\times 2} & \cdots & 0_{2\times 2} & 0_{2\times 2} \\ 
0_{2\times 2} & \bI_{2} & \cdots & 0_{2\times 2} & 0_{2\times 2} \\ 
\vdots & \vdots & \ddots & \vdots & \vdots \\ 
0_{2\times 2} & 0_{2\times 2} & \cdots & \bI_{2} & 0_{2\times 2}%
\end{array}%
\right)
\end{equation*}%
and denote by $\rho \left( \boldsymbol{A}\right) $ the spectral radius of $%
\boldsymbol{A}$, i.e., the maximum absolute eigenvalues of $\boldsymbol{A}$.

\subsection{Stationarity conditions}
The following result gives a sufficient condition for the existence of a stationary and ergodic solution to the MD-INGARCH model.

\begin{prop}\label{Pro3.1} 
Assume that
$F_{\lambda}^1$ and $F_{\lambda}^2$ satisfy \eqref{3.1} and have respective
supports $\mathbb{N}_{0}=\left\{ 0,1,...\right\} $ and $\mathbb{N} =\left\{
1,2,...\right\}$. Assume that
$\left\{ B_{t},t\in \mathbb{Z} \right\} $ is stationary and ergodic with \eqref{fille}. 
There exists a stationary and ergodic process $%
\{Y_t\}$ satisfying $\eqref{3.2a}$-$\eqref{3.2b}$ if 
\begin{equation}
\rho \left( \boldsymbol{A} \right) <1\text{.}  \label{3.7}
\end{equation}
This solution satisfies  
\begin{equation}\label{moment1}
E|Y_t|<\infty\qquad\mbox{ and }\qquad E\lambda_{st}<\infty\quad\mbox{ for }s=1,2.
\end{equation}
If there exists a stationary process $
\{Y_t\}$ satisfying $\eqref{3.2a}$-$\eqref{3.2b}$ and \eqref{moment1} then 
\begin{equation}
\sum_{j=1}^{p}\beta _{sj}<1\quad\mbox{ for }s=1,2.  \label{3.8a}
\end{equation}%
\end{prop}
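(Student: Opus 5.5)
We build the solution explicitly by an iterative (Picard-type) construction driven by i.i.d.\ uniform variables, in the spirit of \cite{af21}. Take two independent i.i.d.\ sequences $\{U_{1t}\}$, $\{U_{2t}\}$ of uniform $(0,1)$ variables, independent of the stationary ergodic sequence $\{B_t\}$. Write $X_{st}=F^{s,-}_{\lambda_{st}}(U_{st})$, so that the model becomes a recursion for the vector $\underline{\lambda}_t=(\lambda_{1t},\lambda_{2t})'$:
\begin{equation*}
\lambda_{st}=\omega_s+\sum_{i=1}^q\alpha_{si}\bigl\{B_{t-i}F^{1,-}_{\lambda_{1,t-i}}(U_{1,t-i})+(1-B_{t-i})F^{2,-}_{\lambda_{2,t-i}}(U_{2,t-i})\bigr\}+\sum_{j=1}^p\beta_{sj}\lambda_{s,t-j}.
\end{equation*}
Define iterates $\underline{\lambda}_t^{(0)}=(\omega_1,\omega_2)'$ and $\underline{\lambda}_t^{(k+1)}$ by plugging $\underline{\lambda}^{(k)}$ into the right-hand side (with $\lambda^{(k)}_{s,t-j}$ for the $\beta$ terms). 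Each iterate is a measurable function of $\{(B_u,U_{1u},U_{2u}),u\le t\}$, hence stationary and ergodic.

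The key step is monotonicity plus an $L^1$ contraction. By \eqref{3.1}, $\lambda\mapsto F^{s,-}_\lambda(u)$ is nondecreasing, so by induction $\lambda^{(k)}_{st}$ is nondecreasing in $k$ and the limit $\lambda_{st}=\lim_k\lambda^{(k)}_{st}$ exists in $[0,\infty]$. To show it is finite, bound $\Delta^{(k)}_{st}=E(\lambda^{(k+1)}_{st}-\lambda^{(k)}_{st})\ge0$. Conditioning on $B_{t-i}$ and the past, $E\{F^{s,-}_{\lambda'}(U)-F^{s,-}_{\lambda}(U)\}=\lambda'-\lambda$ (the means), and with \eqref{fille} this gives $E[B_{t-i}(X^{(k+1)}_{1,t-i}-X^{(k)}_{1,t-i})]\le\pi_1^+\Delta^{(k-1)}_{1,t-i}$ and similarly with $\pi_0^+$ for the second component. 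Here one uses that $B_{t-i}$ is independent of $U_{\cdot,t-i}$ and the bound on $\pi_t$, being careful that $\lambda^{(k)}_{t-i}$ depends on $B$ only through times before $t-i$; this conditioning argument is the step I expect to need the most care. Stacking the differences over lags $0,\dots,r-1$ yields componentwise $\underline{\Delta}^{(k)}\le\bA\,\underline{\Delta}^{(k-1)}$, so $\underline{\Delta}^{(k)}\le\bA^k\underline{\Delta}^{(0)}$, which is summable by \eqref{3.7}. Monotone convergence then gives $E\lambda_{st}<\infty$, hence $\lambda_{st}<\infty$ a.s.

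Next we pass to the limit in the recursion. Generalized inverses are left-continuous and nondecreasing, so $F^{s,-}_{\lambda^{(k)}}(U)\uparrow F^{s,-}_{\lambda}(U)$ for the relevant $\lambda$; one may need continuity of $\lambda\mapsto F_\lambda^-$ from below, otherwise argue via monotone convergence of expectations and the equal-mean identity to conclude that the limits coincide a.s. Then $(Y_t,\underline\lambda_t)$ solves \eqref{3.2a}-\eqref{3.2b}, is a measurable function of a stationary ergodic input and therefore stationary and ergodic, and satisfies $E|Y_t|=E\{\pi_t\lambda_{1t}+(1-\pi_t)\lambda_{2t}\}<\infty$, which is \eqref{moment1}. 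Conditional laws hold because $U_{st}$ is independent of $\mathcal F_{t-1}$ and $B_t$.

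For the necessity of \eqref{3.8a}, take expectations in \eqref{3.2b} under stationarity and \eqref{moment1}: $E\lambda_{s}(1-\sum_j\beta_{sj})=\omega_s+\sum_i\alpha_{si}E|Y|$. For $s=1$ the right-hand side is at least $\omega_1>0$, and for $s=2$ it is at least $\omega_2>0$; since $E\lambda_s<\infty$, this forces $1-\sum_j\beta_{sj}>0$.
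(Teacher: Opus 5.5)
Your proposal is correct and follows the paper's proof essentially step by step: quantile coupling with uniforms independent of $\{B_t\}$, monotone iterates via \eqref{3.1}, the $L^1$ contraction from the equal-mean property and \eqref{fille} (the paper's $E\boldsymbol{\chi}_t^{(k)}\leq \bA E\boldsymbol{\chi}_t^{(k-1)}$), and the same expectation argument for \eqref{3.8a}. For the limit step, keep your fallback (monotone convergence plus equal means forcing $\lim_k F^{s,-}_{\lambda^{(k)}}(U)=F^{s,-}_{\lambda}(U)$ a.s.), since left-continuity in $u$ says nothing about continuity in $\lambda$, and note that it makes explicit a point the paper glosses over; for general $(p,q)$ there is one small fix: with your synchronous indexing, stationarity gives $\Delta^{(k)}\leq \bigl(\sum_{l=1}^{r}\bA^{(l)}\bigr)\Delta^{(k-1)}$ for $\Delta^{(k)}=(\Delta^{(k)}_{1t},\Delta^{(k)}_{2t})^\top$ rather than a stacked companion inequality (whose lower blocks would need $\Delta^{(k)}\leq\Delta^{(k-1)}$), so either use the paper's staggered iterates $\lambda^{(k-j)}_{s,t-j}$, $Y^{(k-i)}_{t-i}$, or invoke the Perron--Frobenius fact that for nonnegative blocks $\rho(\bA)<1$ implies $\rho\bigl(\sum_{l}\bA^{(l)}\bigr)<1$.
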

Obviously, the sufficient stationarity condition \eqref{3.7} implies the necessary stationarity condition \eqref{3.8a}, but the former is generally much more restrictive. 
The following proposition shows that this sufficient condition is optimal in the case of Example \ref{ex:iid}.
\begin{prop}\label{Pro3.1CNS}

If $\{B_{t}\}$ is i.i.d.\ (Example \ref{ex:iid}), with $P(B_{t}=1)=\pi \in (0,1)$, then Proposition \ref{Pro3.1} holds with 
\begin{equation*}
\boldsymbol{A}^{\left( l\right) }=\left( 
\begin{array}{cc}
\alpha _{1l}\pi +\beta _{1l} & \alpha _{1l}(1-\pi ) \\ 
\alpha _{2l}\pi  & \alpha _{2l}(1-\pi )+\beta _{2l}%
\end{array}%
\right) ,\quad 1\leq l\leq r.
\end{equation*}
Moreover, the condition \eqref{3.7} is necessary and sufficient for the existence of a stationary solution such that $E\lambda_{st}<\infty$ for $s=1,2$.
\end{prop}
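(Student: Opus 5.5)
The proof has two parts: a sufficiency part that reuses Proposition~\ref{Pro3.1}, and a necessity part that takes expectations in \eqref{3.2b}.

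\emph{Sufficiency.} When $\{B_t\}$ is i.i.d., $\pi_t=\pi$ is constant. Hence \eqref{fille} holds with $\pi_1^+=\pi$ and $\pi_0^+=1-\pi$. Formally the requirement $\pi_0^+\in(1-\pi_1^+,1)$ is strict, so I will look inside the proof of Proposition~\ref{Pro3.1} rather than apply it verbatim. That proof presumably bounds $E(|Y_t|\mid\mathcal{F}_{t-1})=\pi_t\lambda_{1t}+(1-\pi_t)\lambda_{2t}$ by $\pi_1^+\lambda_{1t}+\pi_0^+\lambda_{2t}$. It then uses the stochastic-equal-mean order \eqref{3.1} to build the solution by iterating a monotone coupling $X_{st}=F^{s,-}_{\lambda_{st}}(U_{st})$, and controls the increments through the matrix $\bA$. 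With constant $\pi$ this bound becomes the equality $E(|Y_t|\mid\mathcal{F}_{t-1})=\pi\lambda_{1t}+(1-\pi)\lambda_{2t}$. The same contraction argument therefore goes through with $\pi_1^+=\pi$ and $\pi_0^+=1-\pi$, i.e.\ with the stated $\bA^{(l)}$. This gives existence, ergodicity and \eqref{moment1} under $\rho(\bA)<1$.

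\emph{Necessity.} Suppose a stationary solution exists with $E\lambda_{st}<\infty$. Then $E|Y_t|=\pi E\lambda_{1t}+(1-\pi)E\lambda_{2t}<\infty$, so taking expectations in \eqref{3.2b} is legitimate. The vector $\bmu=(E\lambda_{1t},E\lambda_{2t})'$, which has positive entries, satisfies
\begin{align*}
\bmu=\bomega+\sum_{l=1}^r\bA^{(l)}\bmu,\qquad \bomega=(\omega_1,\omega_2)'.
\end{align*}
Since $B_t$ is independent of $\mathcal{F}_{t-1}$, we have $E|Y_{t-i}|=\pi\mu_1+(1-\pi)\mu_2$, which is exactly what produces the entries of $\bA^{(l)}$. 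I then stack $\bmu$ into $\underline{\bmu}=(\bmu',\dots,\bmu')'\in\mathbb{R}^{2r}$. This vector satisfies $\underline{\bmu}=\underline{\bomega}+\bA\underline{\bmu}$, where $\underline{\bomega}=(\bomega',0,\dots,0)'$ is nonnegative and nonzero.

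\emph{Main obstacle.} The delicate step is concluding $\rho(\bA)<1$ from this fixed-point relation. The matrix $\bA$ is nonnegative, so by Perron--Frobenius it has a left eigenvector $\bv\ge0$, $\bv\neq0$, with $\bv'\bA=\rho(\bA)\bv'$. Multiplying the relation on the left by $\bv'$ gives $(1-\rho(\bA))\bv'\underline{\bmu}=\bv'\underline{\bomega}$.

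Two positivity facts finish the argument.
\begin{itemize}
\item $\bv'\underline{\bmu}>0$, because $\underline{\bmu}$ has strictly positive entries.
\item $\bv'\underline{\bomega}>0$. Since $\underline{\bomega}$ is nonzero only in its first block, I need $\bv$ to be nonzero on the first block. This follows from the companion structure: if $\bv=(\bv_1',\dots,\bv_r')'$, the eigen-equation gives $\bv_{l}'=\bv_1'\bA^{(l)}+\bv_{l+1}'$ up to a factor $\rho$. If $\bv_1=0$, the recursion forces $\bv=0$, provided $\rho(\bA)>0$. The degenerate case $\rho(\bA)=0$ is trivially fine.
\end{itemize}
Hence $1-\rho(\bA)>0$. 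Iterating instead $\underline{\bmu}=\sum_{k<K}\bA^k\underline{\bomega}+\bA^K\underline{\bmu}$ gives an alternative route to the same conclusion.
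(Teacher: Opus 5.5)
Your proof is correct. The sufficiency half is what the paper dismisses as obvious: with $\pi_t\equiv\pi$, the matrix $\bA_t$ in the proof of Proposition~\ref{Pro3.1} is deterministic and equals $\bA$, so the bound $E(\bA_t\boldsymbol{\chi}_t^{(k-1)})\le \bA E\boldsymbol{\chi}_t^{(k-1)}$ becomes an equality. You are also right that $\pi_0^+=1-\pi$ is formally excluded by the strict requirement $\pi_0^+>1-\pi_1^+$ attached to \eqref{fille}, a point the paper passes over.

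The necessity half takes a genuinely different route. The paper writes out only $p=q=1$. There $\bA=\bA^{(1)}$ is $2\times 2$, and taking expectations gives $\blambda=\bomega+\bA\blambda$ with $\bomega>0$. Hence $\bA\blambda<\blambda$ holds in every coordinate for the positive vector $\blambda=E\blambda_t$, and Corollary~8.1.29 of \cite{horn2012matrix} gives $\rho(\bA)<1$ in one line.

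You work instead with the $2r\times 2r$ companion matrix and a left Perron vector, and this matters for general orders. For $r>1$ the stacked vector satisfies $\bA\underline{\bmu}\le\underline{\bmu}$ with strict inequality only in the first block; the lower blocks are equalities. The subinvariance corollary alone would therefore only give $\rho(\bA)\le 1$. Your observation that the companion structure forces the first block of the left Perron vector to be nonzero when $\rho(\bA)>0$ is exactly what upgrades this to $\rho(\bA)<1$.

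So the paper's argument is shorter but tied to $p=q=1$. Extending it would need an extra reduction, e.g.\ to $\sum_l\bA^{(l)}$ together with the equivalence $\rho(\bA)<1\Leftrightarrow\rho(\sum_l\bA^{(l)})<1$ for nonnegative block companion matrices. Yours covers all $(p,q)$ in a self-contained way.

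Your closing remark about iterating $\underline{\bmu}=\sum_{k<K}\bA^k\underline{\bomega}+\bA^K\underline{\bmu}$ is not yet a proof. It needs one more step, for instance noting that $\sum_{j<r}\bA^j\underline{\bomega}$ has all entries positive, so summability of $\bA^k$ applied to it forces $\bA^k\to 0$. Since your main argument is already complete, this is optional.
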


If $\{B_{t}\}$ is a Markov chain, with transition
probabilities $p(i,j)=P(B_{t}=j\mid B_{t-1}=i)\in (0,1)$ for $i,j\in \{0,1\}$%
, then Proposition \ref{Pro3.1} holds with $\pi _{1}^{+}=\max
\{p(0,1),p(1,1)\}$ and $\pi _{0}^{+}=\max \{p(1,0),p(0,0)\}$.
For the Bernoulli INGARCH model (Example \ref{ex:BINGARCH}), Proposition \ref{Pro3.1} holds with $\pi_{1}^{+}=a+b+c$ and $\pi_{0}^{+}=1-c$.

 A necessary condition, sharper than \eqref{3.8a}, for the existence of a stationary solution with a finite mean is given by the following result.

\begin{prop}\label{prop:parameter}
Consider the case where $\{B_t\}$ is an i.i.d.\ Bernoulli %${\cal B}(\pi)$ 
sequence with parameter $\pi\in(0,1)$.
If there exists a stationary process $
\{Y_t\}$ satisfying $\eqref{3.2a}$-$\eqref{3.2b}$  and \eqref{moment1} then \eqref{3.8a} and
\begin{equation}
\pi \left( 1-\sum\limits_{j=1}^{p}\beta _{1j}\right)
^{-1}\sum\limits_{i=1}^{q}\alpha _{1i}+\left( 1-\pi \right) \left(
1-\sum\limits_{j=1}^{p}\beta _{2j}\right) ^{-1}\sum\limits_{i=1}^{q}\alpha
_{2i}<1  \label{3.8b}
\end{equation}
hold.
\end{prop}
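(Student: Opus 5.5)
Taking expectations in the stationary regime is the plan. Suppose $\{Y_t\}$ is stationary, satisfies \eqref{3.2a}--\eqref{3.2b}, and has the finite moments \eqref{moment1}. Put $\mu_s=E\lambda_{st}<\infty$ and $\mu=E|Y_t|<\infty$; these do not depend on $t$ by stationarity. The necessity of \eqref{3.8a} is already contained in Proposition \ref{Pro3.1}, so only \eqref{3.8b} needs a new argument. For completeness I would still recover \eqref{3.8a} from the same computation.

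\textbf{Step 1: the first-moment equations.} Take expectations in \eqref{3.2b}. This gives
$$\mu_s\Big(1-\sum_{j=1}^p\beta_{sj}\Big)=\omega_s+\mu\sum_{i=1}^q\alpha_{si},\qquad s=1,2.$$
The right-hand side is strictly positive, because $\omega_1>0$ and $\omega_2>1-\sum_j\beta_{2j}>0$. Since $\mu_s$ is finite and positive, the left-hand side can only be positive if $1-\sum_j\beta_{sj}>0$, which is \eqref{3.8a}.

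\textbf{Step 2: the equation for $\mu$.} Because $B_t$ is i.i.d.\ and independent of $\mathcal F_{t-1}$, we have $\pi_t=\pi$. Conditional independence of $B_t$ and $X_{st}$ given $\mathcal F_{t-1}$ then yields
$$E(|Y_t|\mid\mathcal F_{t-1})=\pi\lambda_{1t}+(1-\pi)\lambda_{2t},\qquad\text{so}\qquad \mu=\pi\mu_1+(1-\pi)\mu_2 .$$
Now substitute $\mu_s=(\omega_s+\mu\sum_i\alpha_{si})/(1-\sum_j\beta_{sj})$ from Step 1. Writing $\kappa$ for the left-hand side of \eqref{3.8b}, this gives
$$\mu(1-\kappa)=\frac{\pi\omega_1}{1-\sum_j\beta_{1j}}+\frac{(1-\pi)\omega_2}{1-\sum_j\beta_{2j}}>0 .$$
Since $0\le\mu<\infty$, this forces $1-\kappa>0$, which is \eqref{3.8b}.

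\textbf{Main obstacle.} The only delicate point is making sure that taking expectations is legitimate and that no $\infty-\infty$ cancellation occurs. This is exactly what \eqref{moment1} supplies, since all terms involved are nonnegative and integrable. The division in Step 2 is justified because Step 1 has already established \eqref{3.8a}.
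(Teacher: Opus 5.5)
Your proof is correct and follows essentially the same route as the paper. Take expectations in \eqref{3.2b} under stationarity to get $(1-\sum_j\beta_{sj})E\lambda_{st}=\omega_s+\sum_i\alpha_{si}E|Y_t|$, then combine this with $E|Y_t|=\pi E\lambda_{1t}+(1-\pi)E\lambda_{2t}$ to obtain the linear equation for $E|Y_t|$, and conclude \eqref{3.8a} and \eqref{3.8b} from positivity; you also make explicit two things the paper uses implicitly, namely $\pi_t=\pi$ and the conditional-independence computation of $E(|Y_t|\mid\mathcal F_{t-1})$.
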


Under the conditions of the previous proposition, we have
\begin{equation*}
E|Y_{t}|=\left( 1-\pi \frac{\sum\limits_{i=1}^{q}\alpha _{1i}}{%
1-\sum\limits_{j=1}^{p}\beta _{1j}}-\left( 1-\pi \right) \frac{%
\sum\limits_{i=1}^{q}\alpha _{2i}}{1-\sum\limits_{j=1}^{p}\beta _{2j}}%
\right) ^{-1}
\skakko{\pi \frac{\omega _{1}}{1-\sum\limits_{j=1}^{p}\beta _{1j}}+(1-\pi )\frac{\omega _{2}}{1-\sum\limits_{j=1}^{p}\beta _{2j}}}.
\end{equation*}%
For example, when $p=q=1$, the means of $Y_{t}$ and $\left\vert
Y_{t}\right\vert $ are given by%
\begin{align*}
E\left( \left\vert Y_{t}\right\vert \right) & =\frac{\pi \omega
_{1}(1-\beta _{2})+\left( 1-\pi \right) \omega _{2}(1-\beta _{1})}{(1-\beta
_{1})(1-\beta _{2})-\pi \alpha _{1}(1-\beta _{2})-\left( 1-\pi \right)
\alpha _{2}(1-\beta _{1})}, \\
E\left( Y_{t}\right) & =\pi \frac{\omega _{1}+\alpha _{1}E\left( \left\vert
Y_{t}\right\vert \right) }{1-\beta _{1}}-\left( 1-\pi \right) \left( \frac{%
\omega _{2}+\alpha _{2}E\left( \left\vert Y_{t}\right\vert \right) }{1-\beta
_{2}}\right) \text{.}
\end{align*}

The next proposition shows that
the explicit necessary conditions  \eqref{3.8a} and \eqref{3.8b} can be necessary and sufficient.
\begin{prop}\label{prop:NSC}
Consider a process $\{Y_t\}$ satisfying \eqref{3.2a}–\eqref{3.2b} and \eqref{moment1}.
Suppose $p=q=1$ and $\{B_t\}$ is an i.i.d.\ Bernoulli sequence with parameter $\pi\in(0,1)$.
Then, the conditions
\eqref{3.8a} and \eqref{3.8b} are equivalent to \eqref{3.7}.
\end{prop}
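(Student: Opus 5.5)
With $p=q=1$ we have $r=1$, so $\boldsymbol{A}=\boldsymbol{A}^{(1)}$ is the $2\times 2$ matrix of Proposition~\ref{Pro3.1CNS}:
\begin{equation*}
\boldsymbol{A}=\left(\begin{array}{cc}\alpha_1\pi+\beta_1 & \alpha_1(1-\pi)\\ \alpha_2\pi & \alpha_2(1-\pi)+\beta_2\end{array}\right).
\end{equation*}
The plan is to prove the equivalence by pure linear algebra on this nonnegative matrix. The stochastic content is already supplied by Propositions~\ref{Pro3.1CNS} and~\ref{prop:parameter}, so we never need the process itself. Write $a=\alpha_1\pi$, $b=\alpha_2(1-\pi)$ and $f(x)=\det(x\bI_2-\boldsymbol{A})$. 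Expanding the determinant gives
\begin{equation*}
f(x)=(x-\beta_1)(x-\beta_2)-a(x-\beta_2)-b(x-\beta_1).
\end{equation*}
The $\alpha_1\alpha_2\pi(1-\pi)$ cross terms cancel, and this cancellation is what makes the computation tractable.

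\emph{Step 1: \eqref{3.7} implies \eqref{3.8a} and \eqref{3.8b}.} Since $\boldsymbol{A}$ has nonnegative entries, Perron--Frobenius gives $\rho(\boldsymbol{A})\ge$ each diagonal entry, so $\beta_s\le \boldsymbol{A}_{ss}\le\rho(\boldsymbol{A})<1$, which is \eqref{3.8a}. Since $\rho(\boldsymbol{A})<1$ is the largest real root of $f$ and $f\to+\infty$, we have $f(1)>0$:
\begin{equation*}
(1-\beta_1)(1-\beta_2)-a(1-\beta_2)-b(1-\beta_1)>0.
\end{equation*}
Dividing by $(1-\beta_1)(1-\beta_2)>0$ yields exactly \eqref{3.8b}.

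\emph{Step 2: \eqref{3.8a} and \eqref{3.8b} imply \eqref{3.7}.} Reversing the computation, \eqref{3.8a} together with \eqref{3.8b} gives $f(1)>0$. This alone does not rule out both roots of $f$ lying above $1$. To exclude that, I use the vertex of $f$. Its trace is $\beta_1+\beta_2+a+b$, so the root midpoint is $m=(\beta_1+\beta_2+a+b)/2$. From \eqref{3.8b} we get $a<1-\beta_1$ and $b<1-\beta_2$, since each term in \eqref{3.8b} is nonnegative and their sum is below $1$. Hence $\beta_1+a<1$ and $\beta_2+b<1$, which give $m<1$. So $1$ lies to the right of the vertex and $f(1)>0$, which forces both real roots to be below $1$. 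The roots are real because a nonnegative $2\times2$ matrix has discriminant $(\boldsymbol{A}_{11}-\boldsymbol{A}_{22})^2+4\boldsymbol{A}_{12}\boldsymbol{A}_{21}\ge0$. Finally, the Perron root is nonnegative and is the largest root, so $\rho(\boldsymbol{A})<1$.

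The main obstacle is Step 2, because $f(1)>0$ by itself is not enough. The needed extra control comes from the termwise bounds $a<1-\beta_1$ and $b<1-\beta_2$ that follow from \eqref{3.8b}. I would also remark that, since the statement assumes a solution with \eqref{moment1}, Proposition~\ref{prop:parameter} already gives \eqref{3.8a} and \eqref{3.8b}. Read that way, the substance of the result is that $\rho(\boldsymbol{A})<1$ holds as well, which also recovers consistency with the necessity part of Proposition~\ref{Pro3.1CNS}.
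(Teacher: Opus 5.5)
Your proof is correct and follows essentially the same route as the paper. Both arguments reduce $\rho(\boldsymbol{A})<1$ to the two conditions $f(1)>0$ and trace $<2$ (your vertex condition $m<1$), use the nonnegative-matrix bound $\beta_s\le\rho(\boldsymbol{A})$ to get \eqref{3.8a}, and use the termwise bounds $\alpha_1\pi<1-\beta_1$, $\alpha_2(1-\pi)<1-\beta_2$ from \eqref{3.8b} for the converse; the only difference is that the paper writes out the explicit larger root and squares $\sqrt{\cdot}<2-\mathrm{tr}(\boldsymbol{A})$, whereas you argue from the shape of the parabola.
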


\subsection{Mixing conditions}

For a real process $\{X_t, t\in \mathbb{Z}\}$ and $-\infty\leq t\leq s\leq
\infty$ we use the notation $X_{t:s}=(X_t,X_{t+1},\dots,X_s)$ and $%
X_{s:t}=(X_s,X_{s-1},\dots,X_t)$.  Let $%
\{Y_t,t\in\mathbb{Z}\}$ be the stationary process defined in the proof of Proposition \ref{Pro3.1}. Denote by $%
\mathcal{B}$ the Borel sigma-algebra of $\mathbb{R}^{\infty}$. For $h\geq
0$,  let the $\beta$-mixing coefficient 
\begin{equation*}
\beta_Y(h)=E\sup_{A\in \mathcal{B}}
\left|P\left(Y_{t+h:\infty}\in A\mid
Y_{-\infty:t}\right)-P\left(Y_{t+h:\infty}\in A\right)\right|.
\end{equation*}
We can think of $\beta_Y(h)$ as a kind of measure of the
dependence between the past and future of the process $\{Y_t, t\in \mathbb{Z}\}$, if the past and future are separated by  $h$ dates (see \cite{bradley2005basic} for a survey on mixing). The geometric decrease of the $\beta$-mixing
coefficients shown in the next proposition is a desirable probabilistic property which, for instance, entails the existence of a central limit theorem on general transformations of the process 
\citep[see {\em e.g.}][]{herrndorf1984functional}.

\begin{prop}
\label{mixingNL} 
Suppose $\{B_t\}$ follows the Bernoulli INGARCH$\left(1,1\right)$ model of Example~\ref{ex:BINGARCH} and the other assumptions of Proposition \ref{Pro3.1}, in particular \eqref{3.7}, are satisfied.  There exists a stationary and ergodic  MD-INGARCH process $%
\{Y_t, t\in\mathbb{Z}\}$ satisfying $\eqref{3.2a}$-$\eqref{3.2b}$,  and there exist constants $K>0$ and $\varrho\in(0,1)$ such that
\begin{equation*}
\beta_Y(h)\leq K\varrho^h,\qquad h\geq 0.
\end{equation*}
\end{prop}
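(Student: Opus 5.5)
We will prove Proposition~\ref{mixingNL} by building a Markov chain on a general state space that contains $Y_t$, and then showing that this chain is geometrically ergodic. For a Markov chain that is stationary and geometrically ergodic, the $\beta$-mixing coefficients decay geometrically, and the same holds for any measurable function of the chain. Since $Y_t$ will be a coordinate of the chain, the bound on $\beta_Y(h)$ follows.

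\textbf{State vector.} With $r=\max(p,q)$, we take
\[
\bS_t=\big(\lambda_{1,t+1},\dots,\lambda_{1,t-p+2},\ \lambda_{2,t+1},\dots,\lambda_{2,t-p+2},\ |Y_t|,\dots,|Y_{t-q+2}|,\ B_t,\ \pi_{t+1},\ Y_t\big).
\]
\begin{itemize}
\item Given $\bS_{t-1}$, draw $B_t\sim{\rm Ber}(\pi_t)$ and $X_{st}\sim F^s_{\lambda_{st}}$ independently.
\item Set $Y_t=B_tX_{1t}-(1-B_t)X_{2t}$.
\item Update $\lambda_{s,t+1}$ by \eqref{3.2b} and $\pi_{t+1}=c+aB_t+b\pi_t$.
\end{itemize}
So $\{\bS_t\}$ is a time-homogeneous Markov chain. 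By Proposition~\ref{Pro3.1}, which uses the Bernoulli INGARCH bounds $\pi_1^+=a+b+c$ and $\pi_0^+=1-c$, this chain has a stationary solution with finite first moments. It remains to prove geometric ergodicity, for which we use the drift criterion of Meyn and Tweedie (or Feigin--Tweedie).

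\textbf{Drift.} We take the Lyapunov function $V(\bS)=1+\bv'\boldsymbol{\Lambda}$. Here $\boldsymbol{\Lambda}$ stacks the lagged $(\lambda_{1},\lambda_{2})$ in the companion form used to define $\bA$, and $\bv>0$ is a left Perron-type vector with $\bv'\bA\le \varrho_0\bv'$ for some $\varrho_0\in(\rho(\bA),1)$. Such a $\bv$ exists because $\bA\ge0$ and $\rho(\bA)<1$; if needed we perturb $\bA$ to a strictly positive matrix with spectral radius still $<1$.

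The key identity is
\[
E(|Y_t|\mid \bS_{t-1})=\pi_t\lambda_{1t}+(1-\pi_t)\lambda_{2t}\le \pi_1^+\lambda_{1t}+\pi_0^+\lambda_{2t}.
\]
Combined with \eqref{3.2b}, it gives $E\{V(\bS_t)\mid\bS_{t-1}\}\le \varrho_0V(\bS_{t-1})+K$. This is exactly the computation behind \eqref{3.7}. The coordinates $B_t$ and $\pi_{t+1}$ are bounded, so they do not affect the drift.

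\textbf{Main obstacle: irreducibility and small sets.} The state space is continuous, because the $\lambda$'s and $\pi$ take real values, while the noise is discrete. A standard minorization on compact sets therefore fails, since $\lambda_{st}$ and $\pi_t$ are deterministic functions of the infinite past. We handle this in one of two ways.

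\emph{First option: the \cite{af21} route.} That paper proves $\beta$-mixing for INGARCH models with $F\in\mathbb{F}$, and the approach is to reproduce its argument. It uses a contraction/coupling argument on the iterated random function representation $\bS_t=\Phi(\bS_{t-1},U_t)$, where $U_t=(U_{0t},U_{1t},U_{2t})$ are i.i.d.\ uniforms and
\[
B_t=\mathbbm{1}\{U_{0t}\le\pi_t\},\qquad X_{st}=F^{s,-}_{\lambda_{st}}(U_{st}).
\]
Property \eqref{3.1} makes the map monotone in $\boldsymbol{\Lambda}$.

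Start two chains from different initial states and drive them with the same $U_t$. Monotonicity together with $\rho(\bA)<1$ gives
\[
E\|\bS_t-\bS'_t\|\le K\varrho^t\|\bS_0-\bS'_0\|.
\]
For $B$, the bound uses $|\pi_t-\pi'_t|\le b^t|\pi_0-\pi'_0|$ together with $P(B_t\ne B'_t)=E|\pi_t-\pi'_t|$. Because the counts are integer-valued, the event $\{Y_{t}\ne Y'_{t}\}$ has probability at most $E|\lambda_t-\lambda'_t|+E|\pi_t-\pi'_t|$. This yields a coupling in which the future paths $Y_{t+h:\infty}$ coincide except on an event of probability $O(\varrho^h)$.

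\emph{Conclusion.} The coupling inequality gives $\beta_Y(h)\le 2P(\text{paths differ after }h)\le K\varrho^h$. The delicate point is showing that, after the $Y$ paths merge, the $\lambda$'s also merge geometrically, so that later $Y$'s still coincide. This follows because $\lambda$ is driven only by $|Y|$ and contracts by $\bA$. The total probability of any later discrepancy is then bounded by $\sum_{k\ge h}K\varrho^k=O(\varrho^h)$.
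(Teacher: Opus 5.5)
Your final route is the one the paper uses: two copies driven by common uniforms, monotonicity from \eqref{3.1}, and the integer-valued bound $P(Y_t\neq Y'_t)\le E|Y_t-Y'_t|$. The drift computation is never needed. The gap is in the step that carries the whole argument. The bound $E\|\bS_t-\bS'_t\|\le K\varrho^t\|\bS_0-\bS'_0\|$ does not follow from monotonicity and $\rho(\bA)<1$, and it is false with a uniform $K$.

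Monotonicity only controls the discrepancy when the signs agree:
$E\{\bigl||Y_t|-|Y'_t|\bigr|\mathbbm{1}_{B_t=B'_t}\mid\cdot\}\le\pi_1^+|\lambda_{1t}-\lambda'_{1t}|+\pi_0^+|\lambda_{2t}-\lambda'_{2t}|$.
This is exactly what produces $\bA$. On $\{B_t\neq B'_t\}$ the two magnitudes are drawn from different families ($F^1$ versus $F^2$). So $\bigl||Y_t|-|Y'_t|\bigr|$ is of order $\lambda_t+\lambda'_t$ even when $\lambda_t=\lambda'_t$. The bound on the $\lambda$-discrepancy therefore picks up the extra term $E\{(\lambda_{1t}+\lambda_{2t}+\lambda'_{1t}+\lambda'_{2t})|\pi_t-\pi'_t|\}$ at every step. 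Two states sharing intensities equal to a large $L$ but with $\pi_1\neq\pi'_1$ already give $E\|\bS_1-\bS'_1\|$ of order $L|\pi_1-\pi'_1|$. So the delicate point is sign mismatch, not the re-merging of the $\lambda$'s you single out.

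Relatedly, $|\pi_t-\pi'_t|\le b^t|\pi_0-\pi'_0|$ is false pathwise, because of the term $a(B_{t-1}-B'_{t-1})$. What holds is $E|\pi_t-\pi'_t|\le(a+b)E|\pi_{t-1}-\pi'_{t-1}|$, using $E(|B_t-B'_t|\mid\cdot)=|\pi_t-\pi'_t|$ under the common-uniform coupling. This is still geometric since $a+b<1$.

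The paper removes the cross term by separating the two sources of discrepancy.
\begin{itemize}
\item Start both chains from independent draws of the stationary law of the state, so only the first moments \eqref{moment1} are needed.
\item Let $\mathbb{B}_h$ be the event that the $B$'s coincide from time $[h/2]$ on, and $\mathbb{B}_{[h/2]:t}$ the event that they coincide at times $[h/2],\dots,t$.
\item Then $\beta_Y(h)\le P(\overline{\mathbb{B}_h})+\sum_{k\ge0}E|Y_{h+k}-Y'_{h+k}|\mathbbm{1}_{\mathbb{B}_h}$.
\item The first term is $O((a+b)^{h/2})$ by the recursion above.
\item On $\mathbb{B}_{[h/2]:t}$ the signs agree. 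With $\bd_t=(|\lambda_{1t}-\lambda'_{1t}|,|\lambda_{2t}-\lambda'_{2t}|)^\top$, this gives $E\bd_t\mathbbm{1}_{\mathbb{B}_{[h/2]:t-1}}\le\bA E\bd_{t-1}\mathbbm{1}_{\mathbb{B}_{[h/2]:t-2}}$.
\item Iterating this $h-[h/2]$ times from the finite value $E\bd_{[h/2]+1}$ gives $O(\varrho^h)$.
\end{itemize}

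Alternatively, you can keep your global recursion but close it by also tracking $u_{st}=E(\lambda_{st}|\pi_t-\pi'_t|)$. For $p=q=1$ one checks, entrywise,
$u_{t+1}\le\bA u_t+(a+b)(\omega_1,\omega_2)^\top E|\pi_t-\pi'_t|$.
This uses $a+b\pi_1^+\le\pi_1^+$ and $a+b\pi_0^+\le\pi_0^+$, both of which hold when $a+b+c<1$. Either way, this step has to be supplied; as written, the proposal only asserts it.
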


\section{Inference}\label{sec:4}

Let $Y_{1},Y_{2},...,Y_{n}$ be a finite sample-path generated from model %
\eqref{3.2a} and $B_{1},B_{2},...,B_{n}$ be the corresponding
observation-sign indicators. Assume that the generating$\ $process $\{B_{t}\} $ has the Bernoulli INGARCH$\left( 1,1\right) $ representation
of Example \ref{ex:BINGARCH}. Our first aim is to estimate the true parameter $\btheta _{0}=\left(
\bphi_{0}^\top,\bpsi _{0}^\top\right) ^\top\in $ $\bTheta
:=\bPhi \times \bPsi $ where $\bphi _{0}=\left( a_{0},b_{0},c_{0}\right)
^\top\in \bPhi \subset \left( 0,1\right) \times \left[ 0,1\right) ^{2}$ and $%
\bpsi _{0}^\top=\left( \bpsi _{01}^\top,\bpsi _{02}^\top\right)
\in \bPsi =\bPsi _{1}\times \bPsi _{2}$ with $\bpsi _{0s}=\left(
\omega _{0s},\alpha _{0s1},...,\alpha _{0sq},\beta _{0s1},...,\beta
_{0sp}\right)^\top \in \bPsi _{s}\subset \left( 0,\infty \right) \times \lbrack
0,\infty )^{q+p}$  for $s=1,2$.
Since we do not want to make strong parametric assumptions on the conditional distributions $F_{\lambda}^1$ and $F_{\lambda}^2$, we propose a semi-parametric estimator, more precisely, the mixed Poisson maximum quasi-likelihood (MP-QML) estimator.

Writing in \eqref{3.2b} the true intensities $\lambda _{st}:=\lambda
_{st}\left( \bpsi _{0s}\right) $ in terms of the true parameter $\bpsi _{0s}$,
let $\lambda _{st}\left( \bpsi _{s}\right) $ ($s=1,2$) be the generic
intensity function defined for any generic parameter $\bpsi :=\left( \bpsi
_{1}^\top,\bpsi _{2}^\top\right) ^\top\in \bPsi $ and all $t\in 
\mathbb{Z}$ by%
\begin{equation}
\lambda _{st}\left( \bpsi _{s}\right) =\omega _{s}+\sum_{i=1}^{q}\alpha
_{si}\left\vert Y_{t-i}\right\vert +\sum_{j=1}^{p}\beta _{sj}\lambda
_{s,t-j}\left( \bpsi _{s}\right) ,\quad s=1,2\text{,}  \label{4.1}
\end{equation}%
where $\bpsi _{s}=\left( \omega _{s},\alpha _{s1},...,\alpha _{sq},\beta
_{s1},...,\beta _{sp}\right) ^\top$. The sequence $\left\{ \lambda
_{st}\left( \bpsi _{s}\right) ,t\in \mathbb{Z}\right\} $ uniformly exists
almost surely whenever
\begin{equation}
\sum\limits_{j=1}^{p}\beta _{sj}<1,\quad s=1,2  \label{4.2}
\end{equation}%
for every $\bpsi _{s}\in \bPsi _{s}$. Since the $\beta_{sj}$'s are positive, by Corollary 2.2 in \cite{fz19},  when the $\bPsi_{s}$'s are compact, \eqref{4.2} is equivalent to
\begin{equation}
\max_{\bpsi _{s}\in \bPsi _{s}}\rho(\bB_s)<1,\quad s=1,2  \label{4.2bis}
\end{equation}%
where
$$
\bB_s
:=
\begin{pmatrix}
\beta_{s1}&\beta_{s2}&\cdots&\beta_{sp}\\
1&0&\cdots&0\\
\vdots&\ddots&\ddots&\vdots\\
0&\cdots&1&0
\end{pmatrix}.
$$

As $\lambda _{st}\left( \bpsi _{s}\right) 
$ is not computable from a finite dataset, let $\widetilde{\lambda }%
_{10},...,\widetilde{\lambda }_{1,1-p}$, $\widetilde{\lambda }_{20},...,%
\widetilde{\lambda }_{2,1-p}$, $Y_{0},...,Y_{1-q}$ 
be fixed starting values
and define $\widetilde{\lambda }_{st}\left( \bpsi _{s}\right) $ to be a
data-computable counterpart of $\lambda _{st}\left( \btheta \right) $ given by%
\begin{equation*}
\widetilde{\lambda }_{st}\left( \bpsi _{s}\right) =\omega
_{s}+\sum_{i=1}^{q}\alpha _{si}\left\vert Y_{t-i}\right\vert
+\sum_{j=1}^{p}\beta _{sj}\widetilde{\lambda }_{s,t-j}\left( \bpsi
_{s}\right) ,\quad t\geq 1\text{.} 
\end{equation*}%
Similarly, for any $\bphi \in \bPhi $, let $\pi _{t}\left( \bphi \right) $ be the probability parameter function given by 
\begin{equation}
\pi _{t}\left( \bphi \right) =c+aB_{t-1}+b\pi _{t-1}\left( \bphi \right) \text{%
,}\quad t\in \mathbb{Z}  \label{4.4}
\end{equation}%
and let $\widetilde{\pi }_{t}\left( \bphi \right) $ be an observable proxy of 
$\pi _{t}\left( \bphi \right) $ given by%
\begin{equation*}
\widetilde{\pi }_{t}\left( \bphi \right) =c+aB_{t-1}+b\widetilde{\pi }%
_{t-1}\left( \bphi \right) \text{, \ }t\geq 1\text{,}  
\end{equation*}%
for some arbitrary fixed initial values for ${B}_{0}$ and $\widetilde{\pi }%
_{0}.$ 
Note that \eqref{4.4} is well-defined since $b\in \left[ 0,1\right) $%
. It will be seen that under \eqref{4.2}, the choice of initial values $%
\widetilde{\lambda }_{10},...,\widetilde{\lambda }_{1,1-p}$, $\widetilde{%
\lambda }_{20},...,\widetilde{\lambda }_{2,1-p},$ $Y_{0},...,Y_{1-q}$, $
{B}_{0},\widetilde{\pi }_{0}$ is unimportant asymptotically.

\subsection{Mixed Poisson QMLE}

Model \eqref{3.2a} entails
\begin{equation*}
f_{Y_{t}\mid \mathcal{F}_{t-1}}\left( y\right) =\left( \pi
_{t}f_{X_{1t}}\left( y\right) \right) ^{\mathbbm{1}_{y\geq 0}}\left(
\left( 1-\pi _{t}\right) f_{-X_{2t}}\left( y\right) \right) ^{\mathbbm{1}_{y<0}}
\end{equation*}%
where $f_{Y_{t}\mid \mathcal{F}_{t-1}}\left( y\right) =P\left( Y_{t}=y\mid 
\mathcal{F}_{t-1}\right) $ denotes the conditional pmf. If $X_{1t}$ were Poisson distributed with mean $\lambda _{1t}$, and $%
X_{2t}$ were  shifted Poisson distributed with mean $\lambda _{2t}$
(i.e. $X_{2t}=Z_{t}+1$ with $Z_{t}\sim \mathcal{P}\left( \lambda
_{2t}-1\right) $), then the log-conditional pmf of $Y_{t}$ would be given by%
\begin{align*}
\log (f_{Y_{t}\mid \mathcal{F}_{t-1}}\left( y\right) )=&
\left( \log
\left( \pi _{t}\right) -\lambda _{1t}+y\log (\lambda _{1t})-\log (y!)\right)
\mathbbm{1}_{ y\geq 0} \\
&+ 
\left( \log \left( 1-\pi
_{t}\right) -\lambda _{2t}+1-\left( y+1\right) \log (\lambda _{2t}-1)-\log
(\left( -y-1\right) !)\right) \mathbbm{1}_{ y<0 }.
\end{align*}

Thus, we propose to estimate the true parameter $\btheta _{0}$ using the
mixed Poisson QMLE (MP-QMLE), which is a measurable solution to the following
problem%
\begin{equation}
\widehat{\btheta }_{n}=\arg \max_{\btheta \in \bTheta }\widetilde{L}%
_{n}\left( \btheta \right) \ \text{with }\widetilde{L}_{n}\left( \btheta
\right) =\frac{1}{n}\sum_{t=1}^{n}\widetilde{\ell}_{t}\left( \btheta \right) 
\label{4.6}
\end{equation}%
where
\begin{align}\nonumber
\widetilde{\ell}_{t}\left( \btheta \right)
=&
\left( \log \left(\widetilde{\pi }
_{t}\left( \bphi \right) \right) -\widetilde{\lambda }_{1t}\left( \bpsi
_{1}\right) +Y_{t}\log (\widetilde{\lambda }_{1t}\left( \bpsi_{1}\right)
)\right) \mathbbm{1}_{ Y_{t}\geq 0 }\\
\label{4.7}
&+  
\left( \log \left( 1-\widetilde{\pi }%
_{t}\left( \bphi \right) \right) -\widetilde{\lambda }_{2t}\left( \bpsi
_{2}\right) -\left( Y_{t}+1\right) \log (\widetilde{\lambda}_{2t}\left(
\bpsi _{2}\right) -1)\right) \mathbbm{1}_{ Y_{t}<0}\text{.}
\end{align}

To study the consistency and asymptotic normality of the MP-QMLE given by 
\eqref{4.6} we consider the following assumptions.

\begin{ass}\label{ass:A}

\begin{enumerate}
\textbf{}

\textbf{A1.} Conditions of Proposition \ref{Pro3.1} (at $\btheta_0$) %\eqref{3.7} 
and \eqref{4.2} (or equivalently \eqref{4.2bis}) are
satisfied.

\textbf{A2.} We have $E\left( X_{st}^{\tau }\right)<\infty $ for some $\tau >1$, $%
s=1,2$.

\textbf{A3(i).} The polynomials $\alpha _{0s}\left( z\right)
=\sum\limits_{i=1}^{q}\alpha _{0si}z^{i}$ and $\beta _{0s}\left( z\right)
=1-\sum\limits_{j=1}^{p}\beta _{0sj}z^{j}$\ have no common root,\ $\alpha
_{0s}\left( 1\right) \neq 0$, and $\alpha _{0sq}+\beta _{0sp}\neq 0$, $s=1,2$%
.

\textbf{A3(ii).} The coefficient $a_{0}\neq 0$.

\textbf{A4.} The parameter space $\bTheta $ is compact and $\btheta _{0}\in \bTheta $.

\textbf{A5.} The true parameter $\btheta _{0}$ is in the interior of $\bTheta $.

\textbf{A6.} For $s=1,2$ and some $\varepsilon>0$ we have $E\left(\frac{{\rm Var}\left( X_{st}\mid 
\mathcal{F}_{t-1}\right)}{\lambda_{st}(\bpsi_{0s})}\right)^{1+\varepsilon}<\infty$.

\end{enumerate}

\end{ass}

These assumptions are standard and resemble those given
for similar QMLEs of integer-valued models. See e.g. \cite{af16} for the Poisson QMLE. In particular {\bf A3(i)}-{\bf A3(ii)} are identifiability conditions. If {\bf A3(ii)} would not hold we could write $\pi_t=c_0=c_0(1-b)+b\pi_{t-1}$ under many forms.  Let $\overset{D}{\underset{n\rightarrow \infty }%
{\rightarrow }}$ and $\overset{a.s.}{\underset{n\rightarrow \infty }{%
\rightarrow }}$ denote, respectively, the convergence in distribution and
almost sure convergence as $n\rightarrow \infty $. We will show the existence and invertibility of 
{\small
\begin{eqnarray*}
\bPi  &=&E\left( \frac{1}{\pi _{t}\left( \bphi _{0}\right) \left( 1-\pi
_{t}\left( \bphi _{0}\right) \right) }\frac{\partial \pi _{t}\left( \bphi
_{0}\right) }{\partial \bphi }\frac{\partial \pi _{t}\left( \bphi _{0}\right) 
}{\partial \bphi ^\top}\right)   \nonumber \\
\bJ_{1} &=&E\left( \tfrac{Y_{t}}{\lambda _{1t}^{2}\left( \bpsi _{01}\right) }%
\tfrac{\partial \lambda _{1t}\left( \bpsi _{01}\right) }{\partial \bpsi _{1}}%
\tfrac{\partial \lambda _{1t}\left( \bpsi _{01}\right) }{\partial \bpsi
_{1}^{\top }}1_{\left[ Y_{t}\geq 0\right] }\right),
\quad
\bI_{1} =E\left( \left( \tfrac{Y_{t}-\lambda _{1t}\left( \bpsi _{01}\right) }{%
\lambda _{1t}\left( \bpsi _{01}\right) }\right) ^{2}\tfrac{\partial \lambda
_{1t}\left( \bpsi _{01}\right) }{\partial \bpsi _{1}}\tfrac{\partial \lambda
_{1t}\left( \bpsi _{01}\right) }{\partial \bpsi _{1}^{\top }}1_{\left[
Y_{t}\geq 0\right] }\right)  \\
\bJ_{2} &=&E\left( \tfrac{Y_{t}+1}{\left( \lambda _{2t}\left( \bpsi
_{02}\right) -1\right) ^{2}}\tfrac{\partial \lambda _{2t}\left( \bpsi
_{02}\right) }{\partial \bpsi _{2}}\tfrac{\partial \lambda _{2t}\left( \bpsi
_{02}\right) }{\partial \bpsi _{2}^{\top }}1_{\left[ Y_{t}<0\right]
}\right),
\quad
\bI_{2} =E\left( \left( \tfrac{Y_{t}+\lambda _{2t}\left( \bpsi _{02}\right) }{%
 \lambda _{2t}\left( \bpsi _{02}\right) -1}\right) ^{2}%
\tfrac{\partial \lambda _{2t}\left( \bpsi _{02}\right) }{\partial \bpsi _{2}}%
\tfrac{\partial \lambda _{2t}\left( \bpsi _{02}\right) }{\partial \bpsi
_{2}^{\top }}1_{\left[ Y_{t}<0\right] }\right),
\end{eqnarray*}
}
where $\sigma_{st}^2={\rm Var}\left( X_{st}\mid 
\mathcal{F}_{t-1}\right)$.
Let the block-diagonal matrices 
\begin{equation*}\bJ=\mbox{diag}(\bPi ,\bJ_{1},\bJ_{2}),\qquad 
\bSigma =\mbox{diag}\left( \bPi
^{-1},\bJ_{1}^{-1}\bI_{1}\bJ_{1}^{-1},\bJ_{2}^{-1}\bI_{2}\bJ_{2}^{-1}\right). 
\end{equation*}%

\begin{thm}\label{thm:est}
i) \textit{Under} \textit{{\bf A1}-{\bf A4}},%
\begin{equation}
\widehat{\btheta }_{n}\overset{a.s.}{\underset{n\rightarrow \infty }{%
\rightarrow }}\btheta _{0}\text{.}  \label{4.9}
\end{equation}

ii) \textit{If, in addition, {\bf A5}-{\bf A6}\ hold, then }
\begin{equation}
\sqrt{n}\left( \widehat{\btheta }_{n}-\btheta _{0}\right)=\bJ^{-1}\frac{1}{\sqrt{n}}\sum_{t=1}^n\bDelta_t\bxi_t+o_P(1) \overset{D}{%
\underset{n\rightarrow \infty }{\rightarrow }}\mathcal{N}\left( 0,\bSigma
\right), \label{4.10}
\end{equation}
where 
$$\bDelta_t=\mbox{diag}\left(\frac{1}{\pi_t(1-\pi_t)}\frac{\partial \pi_t}{\partial\bphi}, \frac{1}{\lambda_{1t}}\frac{\partial \lambda_{1t}}{\partial\bpsi_1}, \frac{1}{\lambda_{2t}-1}\frac{\partial \lambda_{2t}}{\partial\bpsi_2}\right)\in {\cal F}_{t-1}$$ and 
$\bxi_t^\top=\left\{B_t-\pi_t,B_t(X_{1t}-\lambda_{1t}),(1-B_t)(X_{2t}-\lambda_{2t})\right\}$ such that $\{\bxi_t,{\cal F}_{t}\}$ is a martingale difference sequence.
\end{thm}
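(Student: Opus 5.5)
The plan exploits the separable structure of the criterion. The criterion \eqref{4.7} splits as
$\widetilde{\ell}_t(\btheta)=\widetilde{\ell}_t^{B}(\bphi)+\widetilde{\ell}_t^{1}(\bpsi_1)+\widetilde{\ell}_t^{2}(\bpsi_2)$. Here $\widetilde{\ell}_t^{B}=B_t\log\widetilde\pi_t+(1-B_t)\log(1-\widetilde\pi_t)$, because $\mathbbm{1}_{Y_t\ge0}=B_t$. The term $\widetilde{\ell}_t^{1}=B_t(-\widetilde\lambda_{1t}+X_{1t}\log\widetilde\lambda_{1t})$ is a Poisson quasi-likelihood. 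The term $\widetilde{\ell}_t^{2}=(1-B_t)\{-\widetilde\lambda_{2t}+(X_{2t}-1)\log(\widetilde\lambda_{2t}-1)\}$ is a shifted Poisson quasi-likelihood, using $-(Y_t+1)=X_{2t}-1$ when $B_t=0$. Hence $\widehat\btheta_n$ is the concatenation of three separate maximizers on $\bPhi$, $\bPsi_1$ and $\bPsi_2$, and it suffices to treat each block. The $\bphi$-block is the Bernoulli-INGARCH MLE, and the $\bpsi_s$-blocks are Poisson-type QMLEs weighted by the observable indicators $B_t$ or $1-B_t$. I follow the standard scheme of \cite{af16} and \cite{fz19}, with ergodicity supplied by Proposition~\ref{Pro3.1} for $\{Y_t\}$ and by Example~\ref{ex:BINGARCH} for $\{B_t\}$.

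For consistency, I proceed in four steps. (a) Show that the initial values do not matter: under \eqref{4.2}/\eqref{4.2bis} and compactness, $\sup_{\bTheta}|\widetilde\lambda_{st}-\lambda_{st}|\le K\rho^t$ and $\sup|\widetilde\pi_t-\pi_t|\le K b_{\max}^t$. Since $\lambda_{1t}\ge\omega_1\ge\underline\omega>0$, $\lambda_{2t}-1$ is bounded below (as in Section~\ref{sec:2}, uniformly on the compact set), and $\pi_t\in[c_{\min},1-\cdot)$ is bounded away from $0$ and $1$, the mean-value theorem together with $E|Y_t|<\infty$ from \eqref{moment1} gives $\sup_\btheta|\widetilde L_n-L_n|\to0$ a.s. (b) Check integrability: $E\sup_\btheta|\ell_t(\btheta)|<\infty$ follows from $\log x\le x$ and \eqref{moment1}. (c) Identifiability, taking each block in turn. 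By conditional independence of $B_t$ and $X_{st}$, $E\{\ell_t^1(\bpsi_1)-\ell_t^1(\bpsi_{01})\}=E[\pi_t\lambda_{1t}\{\log(\lambda_{1t}(\bpsi_1)/\lambda_{1t})-\lambda_{1t}(\bpsi_1)/\lambda_{1t}+1\}]\le0$, with equality iff $\lambda_{1t}(\bpsi_1)=\lambda_{1t}$ a.s. (using $\pi_t>0$). The analogous statement holds for $s=2$ with $\lambda-1$, and the Bernoulli block is a Kullback--Leibler argument. Equality of intensities then forces $\bpsi_1=\bpsi_{01}$ by A3(i): the $|Y_t|$ are not a.s. an affine function of the past, since their conditional distribution is nondegenerate, and the no-common-root condition removes the rational-filter ambiguity. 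For the Bernoulli block, $B_{t-1}$ is nondegenerate given $\mathcal F_{t-2}$ and $a_0\ne0$ by A3(ii). (d) Combine these via the standard compactness argument with the ergodic theorem applied to $\sup$ over neighborhoods.

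For asymptotic normality, a Taylor expansion of the score around $\btheta_0$, valid by A5, gives $\sqrt n(\widehat\btheta_n-\btheta_0)=-\{n^{-1}\sum\partial^2\ell_t(\bar\btheta)\}^{-1}n^{-1/2}\sum\partial\ell_t(\btheta_0)$ up to negligible initial-value terms. The latter are again handled by exponential decay of the derivatives $\partial\widetilde\lambda/\partial\bpsi-\partial\lambda/\partial\bpsi$. Direct differentiation yields $\partial\ell_t(\btheta_0)/\partial\btheta=\bDelta_t\bxi_t$: for instance $\partial\ell^1_t/\partial\bpsi_1=B_t(X_{1t}-\lambda_{1t})\lambda_{1t}^{-1}\partial\lambda_{1t}/\partial\bpsi_1$, and $\partial\ell^2_t/\partial\bpsi_2=(1-B_t)\{(X_{2t}-1)/(\lambda_{2t}-1)-1\}\partial\lambda_{2t}/\partial\bpsi_2=(1-B_t)(X_{2t}-\lambda_{2t})(\lambda_{2t}-1)^{-1}\partial\lambda_{2t}/\partial\bpsi_2$. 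Each component of $\bxi_t$ has conditional mean zero by conditional independence, so $\{\bxi_t\}$ is a martingale difference sequence. Its cross-covariances also vanish: $E[(B_t-\pi_t)B_t(X_{1t}-\lambda_{1t})\mid\mathcal F_{t-1}]=0$, and $B_t(1-B_t)=0$. The conditional covariance is therefore block diagonal, with blocks $\bPi$, $\bI_1$ and $\bI_2$; note that $E\{B_t(X_{1t}-\lambda_{1t})^2\mid\mathcal F_{t-1}\}=\pi_t\sigma^2_{1t}$. The CLT for stationary ergodic square-integrable martingale differences then applies.

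Square integrability is the main obstacle. It requires $E\|\partial\lambda_{st}/\partial\bpsi_s\|^2\sigma^2_{st}/\lambda^2_{st}<\infty$. I would use the classical bound $\|\lambda_{st}^{-1}\partial\lambda_{st}/\partial\bpsi_s\|\le K\sum_k\rho^k\,(|Y_{t-k}|/(\omega+|Y_{t-k}|))^{\gamma}$ for any $\gamma\in(0,1)$, as in the GARCH and INGARCH QMLE literature. This bound is integrable to every order and is $\mathcal F_{t-1}$-measurable. Writing the expectation as $E[\lambda_{st}^{-1}\|\partial\lambda/\partial\bpsi\|^2\lambda_{st}^{-1}\cdot\sigma^2_{st}/\lambda_{st}\cdot\lambda_{st}]$ and absorbing one factor $\lambda_{st}$ into the ratio $\partial\lambda/\lambda$, Hölder's inequality with A6 closes the argument. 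For the Hessian, I show $E\sup_{V(\btheta_0)}\|\partial^2\ell_t\|<\infty$ using A2 ($\tau>1$) with the same Hölder device, which gives a uniform ergodic theorem. The limit equals $-\bJ$ because the cross terms involving $X_{st}-\lambda_{st}$ have zero expectation; for the $\bpsi_1$ block this leaves $E\{B_tX_{1t}\lambda_{1t}^{-2}\partial\lambda\partial\lambda^\top\}=\bJ_1$.

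Finally, invertibility of $\bJ$. If $x^\top\bJ_1x=0$, then $x^\top\partial\lambda_{1t}/\partial\bpsi_1=0$ a.s. on $\{Y_t\ge0\}$, hence a.s. by conditioning, since $\pi_t>0$ and the event is $\mathcal F_{t-1}$-measurable. The same identifiability argument as in step (c) then gives $x=0$. The blocks $\bJ_2$ and $\bPi$ are treated likewise, the latter using A3(ii).
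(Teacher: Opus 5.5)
\textbf{Verdict: there is a genuine gap, in the step you flag yourself (square integrability of the score), and the paper's proof does not close it either.} Otherwise your plan is the paper's proof. You split the criterion into three blocks, show initial values are negligible, identify through $\log x\le x-1$ and A3, and conclude consistency by the compactness argument. Normality then follows from a Taylor expansion, the martingale CLT for $\partial\ell_t(\btheta_0)/\partial\btheta=\bDelta_t\bxi_t$, and a uniform ergodic theorem for the Hessian.

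Square integrability of the score means finiteness of $\bI_1,\bI_2$, and your argument for it does not work. First, the bound you quote satisfies $K\sum_k\rho^k\{|Y_{t-k}|/(\omega+|Y_{t-k}|)\}^{\gamma}\le K/(1-\rho)$. It would therefore make $\lambda_{st}^{-1}\partial\lambda_{st}/\partial\bpsi_s$ bounded, which is false: for $p=q=1$ its $\beta_s$-component tends to $m/\beta_s$ as $|Y_{t-1-m}|\to\infty$. The correct device gives something like $K+K\sum_k\rho^k|Y_{t-k}|^{\gamma}$, which has all moments but is unbounded. More importantly, even with the right bound you are left with
\[
\frac{\sigma_{st}^2}{\lambda_{st}^2}\Big\|\frac{\partial\lambda_{st}}{\partial\bpsi_s}\Big\|^2=\frac{\sigma_{st}^2}{\lambda_{st}}\cdot\lambda_{st}\cdot\Big\|\frac{1}{\lambda_{st}}\frac{\partial\lambda_{st}}{\partial\bpsi_s}\Big\|^2 .
\]
For $s=2$ read $\lambda_{2t}-1$ in the denominators; it is comparable to $\lambda_{2t}$. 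The bare factor $\lambda_{st}$ cannot be absorbed into any ratio, so H\"older with A6 needs $\lambda_{st}\in L^{\kappa}$ with $1/\kappa+1/(1+\varepsilon)<1$. A2 only gives $\kappa=\tau$.

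A sharper inequality will not fix this. For $p=q=1$, $\partial\lambda_{st}/\partial\alpha_s=\{\lambda_{st}-\omega_s/(1-\beta_s)\}/\alpha_s$. Hence the $(\alpha_s,\alpha_s)$ entry of $\bI_s$ dominates a positive multiple of $E[\sigma_{st}^2\mathbbm{1}\{\lambda_{st}\geq 2\omega_s/(1-\beta_s)\}]$. On the complementary event $\sigma_{st}^2$ is integrable by A6, so this entry is finite only if $E\sigma_{st}^2<\infty$. Take a negative binomial conditional law with fixed size $r_s$, so that $\sigma_{st}^2=\lambda_{st}+\lambda_{st}^2/r_s$. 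Then A2 and A6 only ask for $E\lambda_{st}^{1+\varepsilon}<\infty$ for some $\varepsilon>0$, whereas $\bI_s<\infty$ needs $E\lambda_{st}^2<\infty$.

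The paper's proof only states ``By A6, $\bI_1$ and $\bI_2$ are also well defined'' and then calls the score a square-integrable martingale. So you have hit a weak point of the published argument rather than departed from it. A complete proof still needs an extra moment condition, either of which makes your H\"older step go through:
\begin{itemize}
\item $E(\sigma_{st}^2)^{1+\delta}<\infty$ for some $\delta>0$, which is implied by A7; or
\item A2 and A6 holding with $1/\tau+1/(1+\varepsilon)<1$.
\end{itemize}

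There is also a smaller slip in your step (b). The inequality $\log x\le x$ together with \eqref{moment1} does not yield $E\sup_{\btheta}|\ell_t(\btheta)|<\infty$, because it produces the second-order quantity $E\{X_{1t}\lambda_{1t}(\bpsi_1)\}$. Instead, as the paper does, use A2 and H\"older: $E|X_{1t}\log\lambda_{1t}(\bpsi_1)|\le\|X_{1t}\|_\tau\|\log\lambda_{1t}(\bpsi_1)\|_{\tau/(\tau-1)}$. This works because $\sup_{\bpsi_1}|\log\lambda_{1t}(\bpsi_1)|$ has moments of all orders.
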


To apply \eqref{4.10}, an estimator of $\bSigma$ is required. The matrix $\bPi$ can be estimated empirically by
$$\widehat\bPi:=\frac{1}{n}\sum_{t=1}^n\frac{1}{\widetilde\pi _{t}\left( \widehat\bphi_{n}\right) \left( 1-\widetilde\pi
_{t}\left( \widehat\bphi_{n}\right) \right) }\frac{\partial \widetilde\pi _{t}\left( \widehat\bphi_{n}\right) }{\partial \bphi }\frac{\partial \widetilde\pi _{t}\left( \widehat\bphi _{n}\right) 
}{\partial \bphi ^\top}.$$ Similar empirical estimators $\widehat\bJ_1$,  $\widehat\bI_1$, $\widehat\bJ_2$ and  $\widehat\bI_2$  are defined. We then set $\widehat\bJ=\mbox{diag}(\widehat\bPi, \widehat\bJ_{1}, \widehat\bJ_{2})$ and $
\widehat\bSigma =\mbox{diag}\left( \widehat\bPi
^{-1},\widehat\bJ_{1}^{-1} \widehat\bI_{1}\widehat\bJ_{1}^{-1}, \widehat\bJ_{2}^{-1}\widehat\bI_{2}\widehat\bJ_{2}^{-1}\right)$ when the matrix $\widehat\bJ$ is invertible, which holds true almost surely when $n$ is large enough. 
\begin{rem}
When $X_{1t}$ is Poisson distributed and $X_{2t}$ is
shifted Poisson distributed, we have ${\rm Var}\left( X_{1t}|\mathcal{F}_{t-1}\right)
=\lambda _{1t}\left( \bpsi _{01}\right) $ and ${\rm Var}\left( X_{2t}|\mathcal{F}%
_{t-1}\right) =\lambda _{2t}\left( \bpsi _{01}\right) -1$. Hence the
equalities $\bI_{1}=\bJ_{1}$ and $\bI_{2}=\bJ_{2}$ hold so $\bSigma $ simplifies to $%
\mbox{diag}\left( \bPi ^{-1},\bJ_{1}^{-1},\bJ_{2}^{-1}\right) $, which is the inverse of the Fisher information matrix. 
Thus, in this case, the
MP-QMLE is asymptotically efficient.
\end{rem}

\begin{rem}
Note that the first-order conditions of the optimization problem \eqref{4.6}-\eqref{4.7} define $\widehat{\bphi }_{n}$, $\widehat{\bpsi }_{2n}$   and $\widehat{\bpsi }_{1n}$  independently. Therefore  $\widehat{\btheta }_{n}$  is defined by
\begin{eqnarray*}
\widehat{\bphi }_{n} &=&\arg \max_{\bphi \in \bPhi }\frac{1}{n}%
\sum_{t=1}^{n}\log \left( \widetilde{\pi }_{t}\left( \bphi \right) \right) \mathbbm{1}_{Y_{t}\geq 0}+\log \left( 1-\widetilde{\pi }_{t}\left( \bphi
\right) \right) \mathbbm{1}_{Y_{t}<0} \\
\widehat{\bpsi }_{1n} &=&\arg \max_{\bpsi _{1}\in \bPsi _{1}}\frac{1}{n}%
\sum_{t=1}^{n}\left( -\widetilde{\lambda }_{1t}\left( \bpsi _{1}\right)
+Y_{t}\log (\widetilde{\lambda }_{1t}\left( \bpsi _{1}\right) )\right) \mathbbm{1}_{Y_{t}\geq 0} \\
\widehat{\bpsi }_{2n} &=&\arg \max_{\bpsi _{2}\in \bPsi _{2}}\frac{1}{n}%
\sum_{t=1}^{n}\left( -\widetilde{\lambda }_{2t}\left( \bpsi _{2}\right)
-\left( Y_{t}+1\right) \log (\widetilde{\lambda }_{2t}\left( \bpsi
_{2}\right) -1))\right) \mathbbm{1}_{ Y_{t}<0}\text{.} 
\end{eqnarray*}%
Since the latter estimates are computationally much easier than \eqref{4.6}-\eqref{4.7}, we will 
adopt them in applications.

Note that the block-diagonal form of $\bSigma$ is consistent with the fact that $\widehat{\btheta }_{n}$  is obtained by computing three separate MP-QMLEs.

\end{rem}

\subsection{Goodness-of-fit tests}\label{sec:5}
In this section, we present a portmanteau test to evaluate the suitability of MD-INGARCH models. These tests examine whether the first residual autocorrelations (up to a certain lag) are jointly significant. Portmanteau tests are among the most popular diagnostic tools in time series analysis.   For a reference book devoted to this class of tests, see \cite{li2003diagnostic}. For more recent sources, see  \cite{boubacar2022portmanteau} and the references therein.
For a Portmanteau test for integer-valued time series, see also \cite{a16}.

The residuals of the MD-INGARCH model are  $\widehat{\epsilon}_1,\dots, \widehat{\epsilon}_n$, where $\widehat{\epsilon}_t=\widetilde\epsilon_{t}(\widehat{\btheta}_n)$ with
\begin{align*}
\widetilde\epsilon_{t}(\btheta)
:= Y_{t} - I_{\{Y_t\geq0\}} \widetilde\lambda _{1t}(\bpsi _{1}) + I_{\{Y_t<0\}} \widetilde\lambda _{2t}(\bpsi _{2}),\quad 1\leq t\leq n,
\end{align*}
and the convention $\widetilde\epsilon_{t}(\cdot)\equiv 0$ when $t<1$ or $t>n$.
Let $\epsilon_{t}(\btheta)
= Y_{t} - I_{\{Y_t\geq0\}} \lambda _{1t}(\bpsi _{1}) + I_{\{Y_t<0\}} \lambda _{2t}(\bpsi _{2})$ and $\epsilon_{t}=\epsilon_{t}(\btheta_0).$
Under the assumption of Theorem \ref{thm:est},  $\{\epsilon_{t}\}$ is uncorrelated: for $h>0$ we have
$
E\left(\epsilon_{t-h}\epsilon_{t}\right)
=
E\left\{\epsilon_{t-h}E\left(\epsilon_{t}\mid \mathcal F_{t-1}\right)\right\}=0.
$
The empirical residual autocorrelation at lag $h$ is defined by
$$
\widehat\rho_{h}=
\frac{\widehat\gamma_{h}}{\widehat\gamma_{0}},\quad \widehat\gamma_{h}
=
\frac{1}{n}\sum_{t=1}^{n}
\widehat\epsilon_{t} \widehat\epsilon_{t - h}.
$$ The idea behind the goodness-of-fit portmanteau test is that, if the model is correctly specified and the other assumptions of  Theorem \ref{thm:est} hold, then the residuals should be close to the innovations, and thus the $\widehat\rho_{h}$'s should all be close to zero.
For some fixed integer $k>0$, let the vector $\widehat\brho_{1:k}=(\widehat\rho_{1},\dots, \widehat\rho_{k})^\top$.
We are going to show that, under the assumptions of Theorem \ref{thm:est},  $$\sqrt{n}\widehat\brho_{1:k}\underset{n\to\infty}{\overset{D}{\to}}{\cal N}(\bzero,\bV_0)$$ for some invertible matrix $\bV_0$. 
To define $\bV_0$ we need to introduce the following matrices. 
Let $\bE$ be the $k\times k$ matrix with generic element  $\bE(i,j)=E\epsilon_t^2\epsilon_{t-i}\epsilon_{t-j}$. Let $\bD$  be the $k\times d$ matrix whose line $i$ is  $\bD(i,\cdot)=E\epsilon_t\frac{\partial}{\partial\btheta^{\top}}\epsilon_{t+i}(\btheta_0)$,
where $d=2(p+q)+3$ is the dimension of $\btheta$. Let $\bC$  be the $k\times d$ matrix whose line $i$ is  $\bC(i,\cdot)=E\epsilon_t\epsilon_{t-i}\frac{\partial}{\partial\btheta^{\top}}\ell_t(\btheta_0)$. We also need the  $d\times d$ matrix  $\bJ=diag(\bPi ,\bJ_{1},\bJ_{2})$.
In the next theorem, we will show that
$$\bV_0=\frac{1}{(E\epsilon_t^2)^2}
\left\{\bE + \bm C \bm J^{-1} \bD^\top + \bD \bm J^{-1} \bm C^\top + \bD \bSigma \bD^\top\right\}.$$ 
Let the empirical estimators
$$\widehat\bE:=\left(\frac{1}{n}\sum_{t=1}^n\widehat\epsilon_t^2\widehat\epsilon_{t-i}\widehat\epsilon_{t-j}\right)_{1\leq i,j\leq k}, \qquad \widehat\bD:=\left(\frac{1}{n}\sum_{t=1}^n\widehat\epsilon_t\frac{\partial}{\partial \theta_j}\widehat\epsilon_{t+i}\right)_{1\leq i\leq k, 1\le j\leq d},$$
$$\widehat E\epsilon_t^2:=\frac{1}{n}\sum_{t=1}^n\widehat\epsilon_t^2, \qquad \widehat\bC:=\left(\frac{1}{n}\sum_{t=1}^n\widehat\epsilon_t\widehat\epsilon_{t-i}\frac{\partial}{\partial \theta_j}\widetilde\ell_{t}(\widehat{\btheta}_n)\right)_{1\leq i\leq k, 1\le j\leq d},$$
and let $\widehat\bV$ be the empirical estimator of $\bV_0$ obtained by plugging.
We generally have
	\begin{equation}
	\label{convergences}
	\widehat\brho_{1:k}\overset{a.s.}{\underset{n\rightarrow \infty }{%
\rightarrow }}\brho_{1:k}\qquad\mbox{ and }\qquad\widehat{\bV}\overset{a.s.}{\underset{n\rightarrow \infty }{%
\rightarrow }} \bV
	\end{equation}
with $\brho_{1:k}=\mathbf{0}_k$ and $\bV=\bV_0$ when the model is correctly specified, and  possibly with $\brho_{1:k}\neq\mathbf{0}_k$ when the model is misspecified. Let $\chi_k^2(\alpha)$ be the $\alpha$-quantile of the chi-squared distribution with $k$ degrees of freedom.

\noindent For the existence of the matrices $\bD$ and $\bE$, we need the following moment condition. 

\textbf{A7.} For $s=1,2$  we have $EX_{st}^{4}<\infty$.

\noindent We also need the following identifiability condition. 

\textbf{A8.} With probability one, the random variable $X_{1t}-\lambda_{1t}+X_{2t}-\lambda_{2t}$ is not zero.

\noindent The previous assumption is very mild, but necessary in order to rule out pathological distributions, as illustrated by the following example.  
\begin{rem}[An example showing that {\bf A8} is not vacuous]
Assume that $X_{1t}=X_{2t}=c$ for some constant $c\in \mathbb{N}$ such that (for some particular values of $\omega$, $\alpha$ and $\beta$)
$\lambda_{st}\equiv c=\omega+\alpha c +\beta c$, then {\bf A8} is not satisfied.    
\end{rem}
\begin{thm}\label{thm:gof}
Suppose $\pi_t$ follows the Bernoulli INGARCH(1,1) model of Example \ref{ex:BINGARCH}.  Under {\bf A1}-{\bf A8}, we have the null
$$H_0:\eqref{convergences}\mbox{ holds with }\brho_{1:k}=\bzero_k$$
and $\bV=\bV_0$ is non-singular, and the test of rejection region $C=\{n
\widehat\brho_{1:k}^\top
\widehat\bV^{-1}
\widehat\brho_{1:k}
>
\chi_{k}^2(1-\alpha)\}$
has the asymptotic level $\alpha\in(0,1).$ Under the alternative
$$H_1:\eqref{convergences}\mbox{ holds with }\brho_{1:k}\neq\bzero_k, $$
if $\bV$ is invertible, the test of rejection region $C$ is consistent. 
\end{thm}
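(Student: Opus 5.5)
The plan is the standard portmanteau route. First approximate the residual autocovariances by the innovation ones via a Taylor expansion around $\btheta_0$, then use the Bahadur representation of Theorem~\ref{thm:est} to obtain a joint martingale CLT. The initial values can be ignored: under \eqref{4.2} and {\bf A7}, $\sup_{\btheta\in\bTheta}|\widetilde\epsilon_t(\btheta)-\epsilon_t(\btheta)|$ and the same difference for the derivatives are bounded by $K\varrho^t$ times an integrable variable, as in the proof of Theorem~\ref{thm:est}. Hence $\sqrt n(\widehat\gamma_h-\gamma_h(\widehat\btheta_n))=o_P(1)$, where $\gamma_h(\btheta)=n^{-1}\sum_t\epsilon_t(\btheta)\epsilon_{t-h}(\btheta)$.

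\textbf{Expansion and the matrix $\bD$.} For $1\le h\le k$, a mean-value expansion gives
\[
\sqrt n\gamma_h(\widehat\btheta_n)=\frac1{\sqrt n}\sum_t\epsilon_t\epsilon_{t-h}+\Big(\frac1n\sum_t\frac{\partial}{\partial\btheta^\top}\{\epsilon_t\epsilon_{t-h}\}(\btheta^*)\Big)\sqrt n(\widehat\btheta_n-\btheta_0).
\]
By the ergodic theorem (Proposition~\ref{Pro3.1}), uniform integrability of the derivatives on a neighbourhood of $\btheta_0$ (from {\bf A7}), and consistency, the bracket converges a.s. to
\[
E\epsilon_{t-h}\frac{\partial\epsilon_t}{\partial\btheta^\top}+E\epsilon_t\frac{\partial\epsilon_{t-h}}{\partial\btheta^\top}.
\]
The first term is $0$ because $\partial\epsilon_t(\btheta_0)/\partial\btheta$ is $\mathcal F_{t-1}$-measurable up to indicators. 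More precisely, $\partial\epsilon_t/\partial\bpsi_1=-B_t\,\partial\lambda_{1t}/\partial\bpsi_1$, and after conditioning we get $E\{\epsilon_{t-h}\pi_t\,\partial\lambda_{1t}\}$. This is not obviously zero, so I have to check it. Since $\epsilon_t=B_t(X_{1t}-\lambda_{1t})-(1-B_t)(X_{2t}-\lambda_{2t})$, the quantity $\epsilon_{t-h}$ is $\mathcal F_{t-1}$-measurable, but the derivative is multiplied by $B_t$, not centred. The fix is to expand $\epsilon_t\epsilon_{t-h}$ differently: $\epsilon_t(\btheta)\epsilon_{t-h}(\btheta_0)$ yields the $\bD$ term through $E\epsilon_{t-h}\partial\epsilon_t/\partial\btheta$, i.e.\ $\bD(h,\cdot)$ after a shift of time. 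Whichever cross term survives, the limit is a deterministic $k\times d$ matrix, and it is exactly $\bD$ as defined in the statement (with $\epsilon_t\,\partial\epsilon_{t+h}$). So
\[
\sqrt n\widehat\gamma_{1:k}=\frac1{\sqrt n}\sum_t\epsilon_t\epsilon_{t-1:t-k}+\bD\sqrt n(\widehat\btheta_n-\btheta_0)+o_P(1).
\]
Also $\widehat\gamma_0\to E\epsilon_t^2$ a.s., and this limit is positive by {\bf A8}.

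\textbf{Joint CLT.} Insert \eqref{4.10}, $\sqrt n(\widehat\btheta_n-\btheta_0)=\bJ^{-1}n^{-1/2}\sum\bDelta_t\bxi_t+o_P(1)$, where $\bDelta_t\bxi_t=\partial\ell_t(\btheta_0)/\partial\btheta$. Then $(\epsilon_t\epsilon_{t-1:t-k}^\top,\ \partial\ell_t/\partial\btheta^\top)^\top$ is a stationary ergodic martingale difference with finite second moments, by {\bf A6}--{\bf A7} and the moment bounds on the derivatives of $\lambda_{st}$. The martingale CLT (Billingsley) gives joint normality with covariance blocks $\bE$, $\bC$ and $\bJ\bSigma\bJ$. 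The asymptotic variance of $\sqrt n\widehat\gamma_{1:k}$ is then $\bE+\bC\bJ^{-1}\bD^\top+\bD\bJ^{-1}\bC^\top+\bD\bSigma\bD^\top$. Dividing by $(E\epsilon_t^2)^2$ via Slutsky gives $\bV_0$.

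\textbf{Non-singularity and conclusion.} This is the main obstacle. Write $\bV_0$ as the variance of $\epsilon_t\epsilon_{t-1:t-k}+\bD\bJ^{-1}\partial\ell_t/\partial\btheta$, up to the scalar factor. If $\bV_0$ were singular, there would be $\bv\ne0$ with $\bv^\top\epsilon_{t-1:t-k}\,\epsilon_t+\bv^\top\bD\bJ^{-1}\bDelta_t\bxi_t=0$ a.s. Condition on $\mathcal F_{t-1}$ and on $B_t$, and use the explicit forms of $\xi_t$ and $\epsilon_t$.
\begin{itemize}
\item On $\{B_t=1\}$ this is affine in $X_{1t}-\lambda_{1t}$.
\item On $\{B_t=0\}$ it is affine in $X_{2t}-\lambda_{2t}$.
\end{itemize}
Combining the two cases, {\bf A8} forces both coefficients to vanish. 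This gives $\bv^\top\epsilon_{t-1:t-k}$ equal to an $\mathcal F_{t-2}$-measurable combination, and the leading lag then has to vanish. Iterating over the lags (as in Francq–Zakoïan's GARCH portmanteau arguments) yields $\bv=0$; {\bf A3} is used to rule out degenerate derivative combinations.

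Consistency of $\widehat\bV$ follows from the uniform ergodic theorem and consistency of $\widehat\btheta_n$. The statistic is then asymptotically $\chi^2_k$ under $H_0$, which gives the level. Under $H_1$, $\widehat\brho_{1:k}\to\brho_{1:k}\ne0$ and $\widehat\bV\to\bV$ is invertible, so $n\widehat\brho^\top\widehat\bV^{-1}\widehat\brho\sim n\,\brho^\top\bV^{-1}\brho\to\infty$ a.s., and the test is consistent.
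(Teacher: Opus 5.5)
Your overall architecture matches the paper's: negligibility of initial values, a mean-value expansion of $\gamma_h$, substitution of the representation \eqref{4.10}, a joint martingale CLT, Slutsky, consistency of $\widehat\bV$, and divergence of the statistic under $H_1$. Your resulting expression for $\bV_0$ is correct. In the $\bD$ step, however, you first claim that the wrong cross term vanishes, and then fall back on ``whichever cross term survives'', which is not an argument. The term that vanishes is $E\epsilon_t\,\partial\epsilon_{t-h}(\btheta_0)/\partial\btheta$: for $h\ge 1$ the derivative is $\mathcal F_{t-1}$-measurable and $E(\epsilon_t\mid\mathcal F_{t-1})=0$. The other term, $E\epsilon_{t-h}\,\partial\epsilon_t(\btheta_0)/\partial\btheta=E\epsilon_t\,\partial\epsilon_{t+h}(\btheta_0)/\partial\btheta$, is the $h$-th line of $\bD$ and is generally non-zero. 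This is easily fixed, but it must be stated.

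The genuine gap is the non-singularity of $\bV_0$. Your conditioning on $\mathcal F_{t-1}$ and $B_t$ is Lemma~\ref{lemchaud}; note that it relies on conditional non-degeneracy of $X_{1t},X_{2t}$, not on \textbf{A8}. It yields
\[
\bmu^\top\bepsilon_{t-1:t-k}=-\bmu^\top\bD\bJ^{-1}\bP_2\,\frac{1}{\lambda_{1t}}\frac{\partial\lambda_{1t}}{\partial\bpsi_1}\quad\mbox{a.s.}
\]
The right-hand side is not $\mathcal F_{t-2}$-measurable, because $\lambda_{1t}$ and $\partial\lambda_{1t}/\partial\alpha_{11}$ contain $|Y_{t-1}|$. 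Both sides are therefore functions of $Y_{t-1}$. Consequently your step ``the leading lag then has to vanish'', which implicitly uses $E(\epsilon_{t-1}\mid\mathcal F_{t-2})=0$, fails, and appealing to ARMA/GARCH-style iteration does not supply what is missing.

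The idea you need, and the place where \textbf{A8} actually enters in the paper, is a separation of sign from modulus. The right-hand side depends on the past only through the absolute values $\{|Y_i|,i<t\}$, whereas
\[
\epsilon_{t-1}=\{X_{1,t-1}-\lambda_{1,t-1}+X_{2,t-1}-\lambda_{2,t-1}\}B_{t-1}-X_{2,t-1}+\lambda_{2,t-1}
\]
loads on the sign $B_{t-1}$ with a coefficient that \textbf{A8} makes a.s.\ non-zero. If $\mu_1\neq 0$, the sign of $Y_{t-1}$ would then be a measurable function of $\{|Y_i|,i<t\}$ and $\{X_{s,t-1},\lambda_{s,t-1}\}$. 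The paper rules this out because $B_{t-1}$ is a genuinely random Bernoulli$(\pi_{t-1})$ selector. Hence $\mu_1=0$, and the same sign argument is repeated lag by lag. Without this step, your proof does not show that $\bV_0$ is invertible, and so establishes neither the $\chi^2_k$ limit nor the asymptotic level $\alpha$.
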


To better control first-kind error, we apply the random-weighting (RW) bootstrapping method (see \cite{z16} and its references). 
Let a sequence of positive i.i.d.\ random variables $\{w_{t}^{*}\}$, independent of the data sequence $\left\{Y_t\right\}$, with both mean and variance one.
Step 1 of Zhu's RW method consists of computing a bootstrapped estimator of $\btheta_0$ through the numerical optimization of an RW objective function. 
To reduce the computational burden of this first step, we used the trick proposed by \cite{kreiss2011range} and \cite{shimizu2013bootstrap} and define
$$\widehat{\btheta}_n^*
= 
\widehat{\btheta}_n 
+ 
\widehat\bJ^{-1} 
\frac{1}{n}\sum_{t=1}^n
\skakko{w_t^*-1}\frac{\partial}{\partial \btheta} \widetilde{\ell}_t\skakko{\widehat{\btheta}_n}.$$ This trick saves time by using a Newton-Raphson type iteration instead of a full numerical optimization to compute the bootstrap estimator. 
We then proceed to Step 2 of  Zhu's RW method and compute
the bootstrapped quantities
\begin{align*}
\widehat\brho_{1:k}^*
=
\begin{pmatrix}
\widetilde\rho_{1}^*\skakko{\widehat{\btheta}_{n}^*}\\
\vdots\\
\widetilde\rho_{k}^*\skakko{\widehat{\btheta}_{n}^*}
\end{pmatrix}-\widehat\brho_{1:k},
\end{align*}
 where
\begin{align*}
\widetilde\rho_{h}^*\skakko{\cdot}
=
\frac{\widetilde\gamma_{h}^*\skakko{\cdot}}
{\widehat\gamma_{0}}
%{\widetilde\gamma_{0}^*\skakko{\cdot}},
\quad
\text{and}
\quad
\widetilde\gamma_{h}^*\skakko{{\btheta}}
=
\frac{1}{n}\sum_{t=1}^{n}
w_{t}^*
\widetilde\epsilon_{t}\skakko{{\btheta}} \widetilde\epsilon_{t - h}\skakko{{\btheta}}.
\end{align*}

\begin{thm}\label{thm:gof2}Suppose $\pi_t$ follows the Bernoulli INGARCH(1,1) model of Example \ref{ex:BINGARCH}.  
%Assume $E\skakko{\omega_{t}^*}^{2+\delta}<\infty$ for some $\delta>0$.
Conditional on almost all realization of $\{Y_t\}$, under {\bf A1}-{\bf A8}, $\sqrt n \widehat\brho_{1:k}^*$ converges in distribution to the centered Gaussian distribution with variance $\bm V$  as $n\to\infty$.
\end{thm}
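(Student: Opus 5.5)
The plan is to reduce the bootstrap statistic to a linear form in the random weights $w_t^*-1$ and apply a conditional Lindeberg CLT, as in \cite{z16}. First I Taylor-expand $\widetilde\gamma_h^*(\widehat\btheta_n^*)$ around $\widehat\btheta_n$ and compare with $\widehat\gamma_h$. Since $\widetilde\gamma_h^*(\widehat\btheta_n)-\widehat\gamma_h=n^{-1}\sum_t(w_t^*-1)\widehat\epsilon_t\widehat\epsilon_{t-h}$, I obtain
\begin{align*}
\sqrt n\,\widehat\gamma^*_{h}(\widehat\btheta^*_n)-\sqrt n\,\widehat\gamma_h
=\frac{1}{\sqrt n}\sum_{t=1}^n (w_t^*-1)\widehat\epsilon_t\widehat\epsilon_{t-h}
+\Big(\frac1n\sum_{t=1}^n w_t^*\frac{\partial}{\partial\btheta^\top}(\widetilde\epsilon_t\widetilde\epsilon_{t-h})(\bar\btheta)\Big)\sqrt n(\widehat\btheta_n^*-\widehat\btheta_n).
\end{align*}
Here $\bar\btheta$ lies between $\widehat\btheta_n$ and $\widehat\btheta_n^*$. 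By definition of $\widehat\btheta_n^*$,
$$\sqrt n(\widehat\btheta_n^*-\widehat\btheta_n)=\widehat\bJ^{-1}\frac{1}{\sqrt n}\sum_{t=1}^n (w_t^*-1)\frac{\partial}{\partial\btheta}\widetilde\ell_t(\widehat\btheta_n).$$

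Next I show that, conditionally on the data (a.s.), the Jacobian average converges to the $h$-th line of $\bD$ (up to the index convention $E\epsilon_t\partial\epsilon_{t+h}$ versus $E\epsilon_{t-h}\partial\epsilon_t$ after stationarity; the term $E\epsilon_{t-h}\partial\epsilon_t$ vanishes or is absorbed as in the proof of Theorem~\ref{thm:gof}). This uses three facts. The first is that $\widehat\btheta_n\to\btheta_0$ a.s. (Theorem~\ref{thm:est}). The second is a uniform ergodic theorem on a neighbourhood of $\btheta_0$, with moment bounds from {\bf A7} and the exponential decay of initial-value effects under \eqref{4.2}. The third is that $n^{-1}\sum_t(w_t^*-1)Z_t\to0$ in conditional probability for data-dependent $Z_t$ with $n^{-1}\sum Z_t^2=O(1)$, which follows from a conditional variance computation. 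Also $\widehat\bJ\to\bJ$ a.s. and $\widehat\gamma_0\to E\epsilon_t^2$, so $\sqrt n\widehat\brho^*_{1:k}=\sum_t (w_t^*-1)\bv_{nt}/\sqrt n+o_{P^*}(1)$. Here
$$\bv_{nt}=\frac{1}{E\epsilon_t^2}\Big\{(\widehat\epsilon_t\widehat\epsilon_{t-h})_{h=1}^k+\bD\bJ^{-1}\frac{\partial}{\partial\btheta}\widetilde\ell_t(\widehat\btheta_n)\Big\}.$$

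The core step is the conditional CLT for $n^{-1/2}\sum_t(w_t^*-1)\bv_{nt}$. The summands are conditionally independent, centered and have variance one times $\bv_{nt}\bv_{nt}^\top$. So I need two things. First, $n^{-1}\sum_t\bv_{nt}\bv_{nt}^\top\to\bV$ a.s.; expanding the product gives sample analogues of $\bE$, $\bC\bJ^{-1}\bD^\top$ and its transpose, and $\bD\bJ^{-1}(\bJ\bSigma\bJ)\bJ^{-1}\bD^\top=\bD\bSigma\bD^\top$. The last identity uses $E\frac{\partial\ell_t}{\partial\btheta}\frac{\partial\ell_t}{\partial\btheta^\top}=\mathrm{diag}(\bPi,\bI_1,\bI_2)=\bJ\bSigma\bJ$, which follows from the martingale-difference representation $\bDelta_t\bxi_t$ in Theorem~\ref{thm:est} (cross blocks vanish because $B_t(1-B_t)=0$ and the conditional independence of $B_t$ and $X_{st}$). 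These limits follow from the ergodic theorem plus replacing $\widehat\btheta_n$ by $\btheta_0$, with uniform domination from {\bf A7}. Second, I need Lindeberg: $\max_{t\le n}\|\bv_{nt}\|/\sqrt n\to0$ a.s., which holds since $n^{-1}\sum\|\bv_{nt}\|^2$ converges a.s. (a stationary square-integrable sequence has $\max_t\|v_t\|=o(\sqrt n)$), combined with $Ew_t^{*2}<\infty$. By Cramér–Wold, the conditional limit is $\mathcal N(\bzero,\bV)$.

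I expect the main obstacle to be the uniform replacement of estimated quantities ($\widehat\btheta_n$, tilde initial values) by true ones inside the fourth-order products defining $n^{-1}\sum\bv_{nt}\bv_{nt}^\top$. This requires uniform moment bounds on derivatives of $\lambda_{st}(\bpsi_s)$ and $\pi_t(\bphi)$ over a neighbourhood, which I would obtain from the standard bound $\sup_{\bpsi}\|\partial\lambda_{st}/\partial\bpsi\|/\lambda_{st}\le K\sum_j\rho^j(1+|Y_{t-j}|)$ together with {\bf A7}.
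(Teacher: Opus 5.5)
Your argument is correct and has the same skeleton as the paper's proof: a mean-value expansion of $\widetilde\gamma_h^*$ about $\widehat\btheta_n$, the exact one-step representation of $\sqrt n(\widehat\btheta_n^*-\widehat\btheta_n)$, and a conditional Lindeberg CLT in the weights $w_t^*-1$. The difference is the order of the steps.

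The paper first uses Lemma~\ref{lembrulant}, a conditional-variance bound, to replace $\widehat\epsilon_t\widehat\epsilon_{t-h}$ and $\frac{\partial}{\partial\btheta}\widetilde\ell_t(\widehat\btheta_n)$ inside the weighted sums by their true stationary counterparts $\epsilon_t\epsilon_{t-h}$ and $\frac{\partial}{\partial\btheta}\ell_t(\btheta_0)$. This costs an $L^2$-closeness step, $n^{-1}\sum_t|\epsilon_t(\widehat\btheta_n)\epsilon_{t-h}(\widehat\btheta_n)-\epsilon_t\epsilon_{t-h}|^2\to0$. In exchange, Lemma~\ref{lemfroid} is then applied to a fixed sequence, so the limit variance comes directly from the ergodic theorem. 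The Lindeberg negligibility $\max_{t\le n}\|\bs_t\|/\sqrt n\to0$ is also automatic for a fixed sequence with convergent $n^{-1}\sum_t\bs_t\bs_t^\top$.

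You instead apply a triangular-array CLT directly to $\bv_{nt}$, which is built from estimated quantities. This spares you the $L^2$-closeness step, since Gram-matrix convergence suffices. However, the reason you give for Lindeberg is then insufficient. For a triangular array, convergence of $n^{-1}\sum_t\|\bv_{nt}\|^2$ does not imply $\max_{t\le n}\|\bv_{nt}\|=o(\sqrt n)$. You must first dominate $\|\bv_{nt}\|$, once $\widehat\btheta_n$ lies in a neighbourhood $V(\btheta_0)$, by two pieces:
\begin{itemize}
\item the stationary square-integrable envelope $\sup_{\btheta\in V(\btheta_0)}\|\cdot\|$;
\item the geometrically decaying initial-value error from \eqref{etcruelle}.
\end{itemize}
Only then does your stationary-sequence fact apply. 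Your uniform moment bounds from the last paragraph are exactly what this needs.

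There is also a slip in the Jacobian limit. The term that vanishes is $E\epsilon_t\frac{\partial}{\partial\btheta}\epsilon_{t-h}(\btheta_0)$, because $\frac{\partial}{\partial\btheta}\epsilon_{t-h}(\btheta_0)$ is $\mathcal{F}_{t-1}$-measurable and $E(\epsilon_t\mid\mathcal{F}_{t-1})=0$. The other term, $E\epsilon_{t-h}\frac{\partial}{\partial\btheta}\epsilon_t(\btheta_0)=E\epsilon_t\frac{\partial}{\partial\btheta}\epsilon_{t+h}(\btheta_0)$, is precisely the $h$-th line of $\bD$ and does not vanish.
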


Let $\widehat\brho_{1:k}^{*(1)},\dots, \widehat\brho_{1:k}^{*(B)}$ be $B$ independent replications of $\widehat\brho_{1:k}^*$, and let 
$\widehat{\bm V}_{n}^*$ their empirical variance.
In view of the previous theorem, it is natural to consider the tests based on the following p-values
$$
p_{1,n} := 
1-\Psi_{k}\skakko{
n
\widehat\brho_{1:k}^\top
{\widehat{\bm V}_{n}^{*{-1}}}
\widehat\brho_{1:k}}
$$
and 
$$
p_{2,n} := \frac{1}{B}\sum_{j=1}^{B}\mathbbm{1}_{\mkakko{
\widehat\brho_{1:k}^{* (j)\top} \widehat\brho_{1:k}^{* (j)}> \widehat\brho_{1:k}^\top\widehat\brho_{1:k}}},
$$
where $\Psi_{k}(\cdot)$ denotes the cumulative distribution function of the chi-square distribution with $k$ degrees of freedom.
We reject $H_0$ whenever $p_{j,n}<0.05$. 
Based on the Monte Carlo experiments we conducted, it seems that $B=500$ replications are sufficient and that the method is not very sensitive to the distribution of $\omega_t^*$, so that 
%Bernoulli distribution 
the standard exponential distribution used in \cite{z16}  is appropriate.
%}

\section{Numerical study}\label{sec:6}
In this section, we conduct numerical simulations to evaluate the finite-sample performance of the proposed method.

\subsection{Parameter estimation}\label{sec:6.1}
First, we evaluate the finite-sample performance of the mixed Poisson QMLE.
The procedure is as follows:
For each time series length $n\in\{1800, 3600, 7200\}$, we generate 1000 independent trajectories from the MD-INGARCH model
\begin{align}\label{MDINGARH_Pois}
Y_{t}|\mathcal{F}_{t-1}\sim \pi _{t}\mathcal{P}\left( \lambda _{1t}\right)
+\left( 1-\pi _{t}\right) _{-}\mathcal{SP}\left( \lambda _{2t}\right)
\end{align}
or
\begin{align}\label{MDINGARH_NB}
Y_{t}|\mathcal{F}_{t-1}\sim \pi _{t}\mathcal{NB}\left(p, \frac{p \lambda _{1t}}{(1-p)}\right)
+\left( 1-\pi _{t}\right) _{-}\mathcal{SNB}\left(p, \frac{p \lambda _{2t}}{(1-p)}\right)
\end{align}
with a Bernoulli INGARCH structure
$B_{t}|\mathcal{F}_{t-1}^B\sim{\rm Ber}(\pi_t)$,
where
\begin{align*}
\lambda _{st}=\omega _{s}+0.3\left\vert
Y_{t-1}\right\vert +0.3\lambda _{s,t-1}
\text{ for $s=1,2$ and }
\pi _{t} =0.2+0.2B_{t-1}+0.2\pi _{t-1}
\end{align*}
with $p=0.5$, $\omega_1=1$, and $\omega_2=2$.
Then, we compute the mixed Poisson (Q)MLE. Based on the estimators' asymptotic distribution, we also obtain approximations of their standard deviations.

Figure \ref{fig:bias_boxplot} reports boxplots of the bias (estimator minus true value) for each parameter.
For each sample size, the labels Pois and NB correspond to the cases where the data are generated from the mixed Poisson and mixed negative binomial models, respectively, while estimation is carried out by the mixed Poisson (Q)MLE in both cases.
The estimation biases for all parameters are close to zero, and the dispersion of the estimates decreases as the sample size $n$ increases.
As expected, the variability is generally larger when the data are generated from the mixed negative binomial model than from the mixed Poisson model.
\begin{figure}[htbp]
\includegraphics[page=1,width=\linewidth]{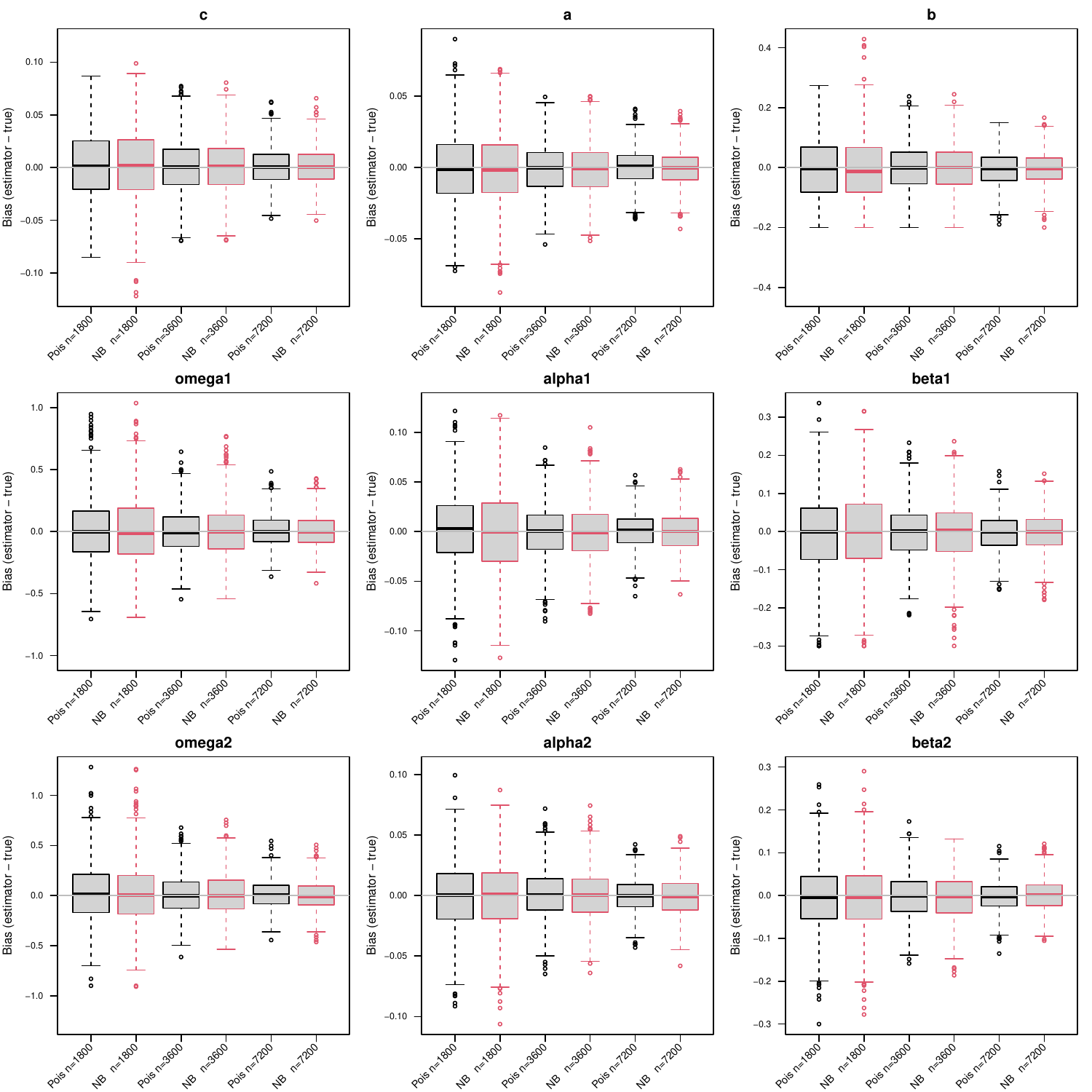}
\caption{
Boxplots of the bias (estimator minus true value) for each parameter based on 1000 replications.
For each parameter, the six boxplots correspond to $n=1800, 3600, 7200$ under data generated from the mixed Poisson model (Pois) and the mixed negative binomial model (NB).
In all cases, estimation is carried out by mixed Poisson (Q)MLE.
}
\label{fig:bias_boxplot}
\end{figure}

Figure \ref{fig:var_boxplot} presents a comparison between the asymptotic standard errors and the empirical standard deviations of the mixed Poisson QMLE. The boxplots summarize the estimated standard deviations obtained from the asymptotic variance expressions, and the solid lines represent the empirical standard deviations computed over replications.
The variances under data generated from the mixed Poisson models are smaller than those under data generated from the mixed NB models.
As the sample size increases, the variances decrease.
Moreover, in most cases, the empirical standard deviations lie within the corresponding boxplots of the estimated standard errors, indicating that the asymptotic variance formulas provide reasonable approximations in finite samples.
Finally, the variance estimators of $c$, $\omega_1$, and $\omega_2$ tend to exhibit an upward bias.
Conversely, the estimators of $b$, $\beta_1$, and $\beta_2$ exhibit a downward bias.

\begin{figure}[htbp]
\includegraphics[page=1,width=\linewidth]{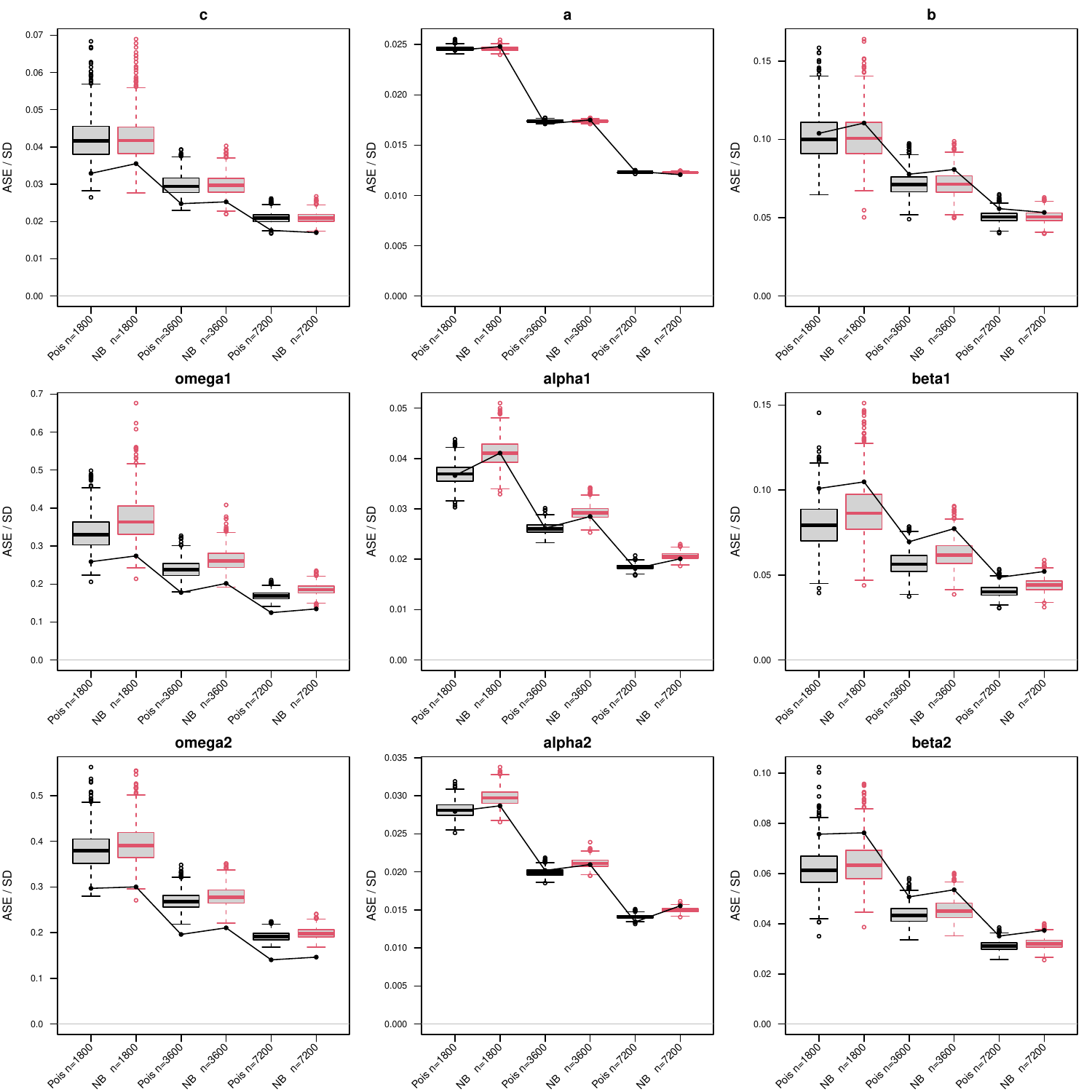}
\caption{
Boxplots of estimated standard deviations of the mixed Poisson QMLE based on the asymptotic variance formulas, with empirical standard deviations computed over replications overlaid as solid lines, for $n=1800$, $3600$, and $7200$, under data generated from the mixed Poisson model (Pois) and the mixed negative binomial model (NB).
}
\label{fig:var_boxplot}
\end{figure}

\subsection{Portmanteau test}
Second, we evaluate the performance of the random weight-based portmanteau test. 
We generate time series of length $n \in \{300, 600, 900\}$ from different models under the null and alternative hypotheses. 
Under the null hypothesis, the data are generated from the models described in Section~\ref{sec:6.1}. 
Under the alternative hypothesis, we modify these models to a log-linear form, where
\begin{align*}
\lambda_{st} = \exp\left(
  \omega_s +
  0.2 \log(|X_{t-1}| + 1) +
  0.2 \log(\lambda_{s,t-1})
\right).
\end{align*}
We then fit the time series using the MD-INGARCH(1,1) model and apply the test with $B = 500$ bootstrap replications and lag order $d = 10$. 
Here, the lag order $d$ specifies how many lags of the residual autocorrelation are included in the test statistic. 
The random weights are generated from an i.i.d.\ standard exponential distribution. 
This procedure is repeated 1000 times, and we compute the empirical size and power of the test.

Figure \ref{fig:res_test} shows empirical rejection probability under the null and alternative hypotheses.
The test based on $p_{1,n}$ exhibits accurate size control, while the test based on $p_{2,n}$ tends to be undersized, that is, its rejection probabilities fall below the nominal significance level.
The choice between the Poisson and negative binomial data-generating processes has minimal impact on the results, as the empirical rejection probabilities are nearly identical in both cases.
Both tests exhibit reasonable power.

\begin{figure}[htbp]
  \centering
\includegraphics[width=\textwidth]{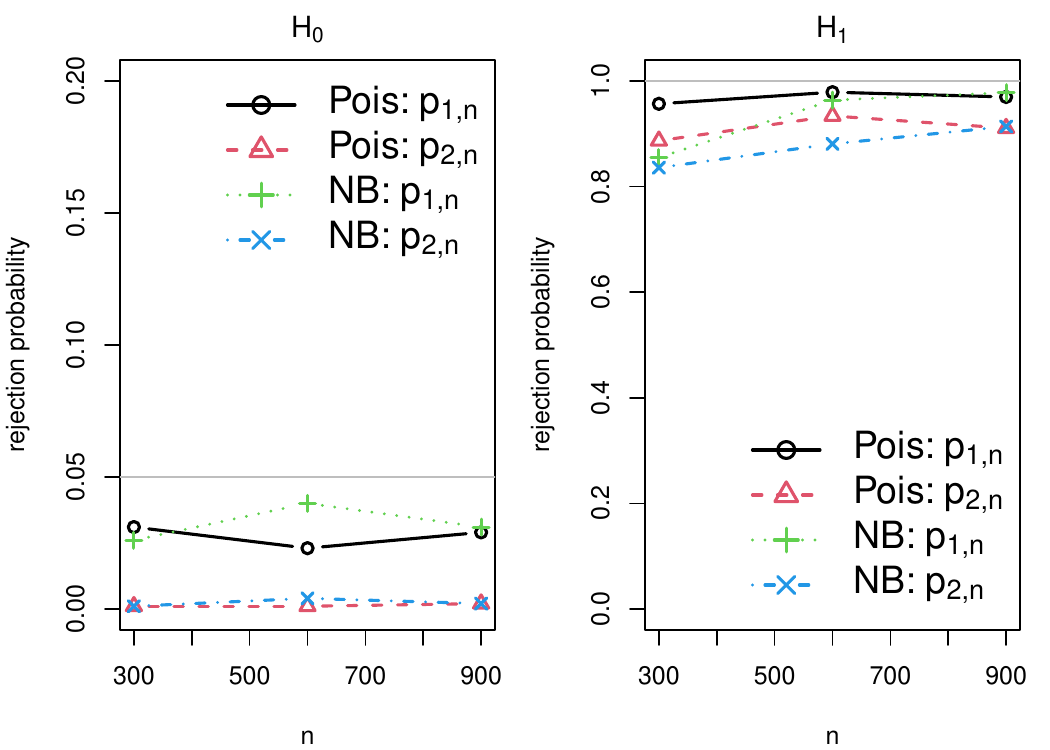}
  \caption{
Empirical sizes and powers of the portmanteau tests based on $p_{1,n}$ and $p_{2,n}$ across different time series lengths $n$ at the nominal level of 0.05, where the data are generated from Poisson (Pois) and negative binomial (NB) linear MD-INGARCH (1,1) models under the null hypothesis (left panel) and
log-linear MD-INGARCH models (1,1) under the alternative hypothesis (right panel),
and are fitted using the same linear MD-INGARCH(1,1) models.
The vertical and horizontal axes represent the rejection probability and the time series length, respectively.
}
  \label{fig:res_test}
\end{figure}

\section{Empirical analysis of Bank of America returns}\label{sec:7}
\subsection{In-sample model fitting and diagnostics}
To evaluate finite-sample performance of the proposed model on real-world data, we analyze the rescaled integer-valued returns of Bank of America. The dataset consists of 4,193 observations recorded every 2 minutes between November 19 and December 19, 2025.
The returns are rescaled by the minimum tick size of one cent, ensuring that the resulting observations are $\mathbb{Z}$-valued.
%This dataset was previously analyzed by \cite{xz22}.
The data was retrieved from Yahoo Finance \citep{YahooFinance} via the yfinance API \citep{yfinance}

Figure \ref{fig:plot_ts} displays the stock price, the rescaled daily returns, their histogram, and the partial autocorrelation function (PACF) plots of the returns and of their signs. 
While the original stock price exhibits a clear trend, the rescaled returns appear to be stationary.
The distribution appears approximately symmetric around zero, although the right tail contains some relatively large positive observations.
%The histogram suggests an asymmetric distribution, with fewer observations in the moderate positive range compared to the corresponding negative range. 
The PACF plots reveal significant serial dependence in both the rescaled returns and their signs.
\begin{figure}[htbp]
\includegraphics[width=0.9\linewidth]{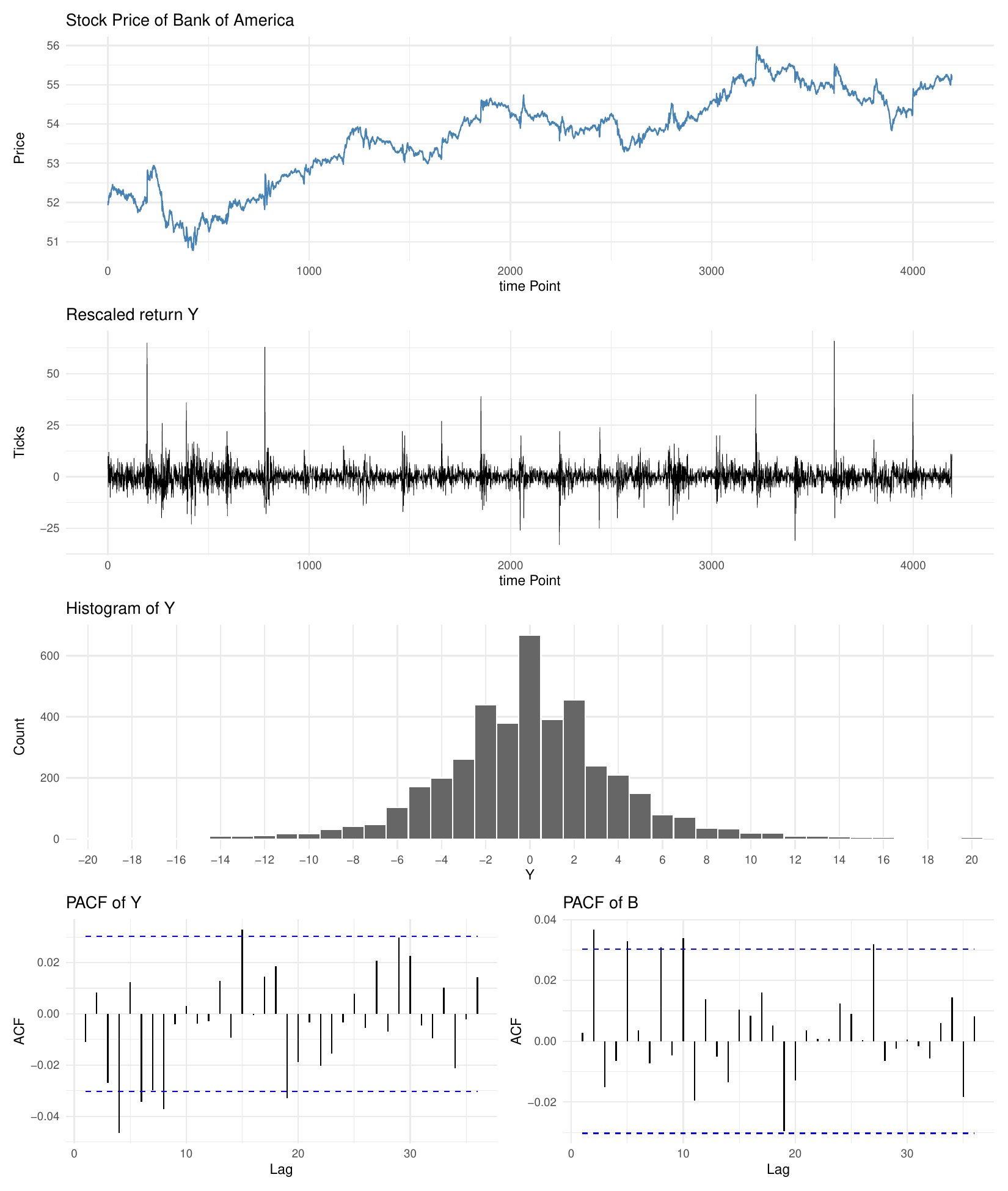}
\caption{The panels in the first and second rows show time series plots for the stock price and the rescaled price differences of Bank of America, respectively.
The panel in third row displays a histogram of the rescaled data.
The left and right panels in the forth row present the autocorrelation function (ACF) plots of the rescaled data and their signs, respectively.
}
\label{fig:plot_ts}
\end{figure}
%The mean and variance for non-negative part and negative part are (,) and (,), respectively. So, the data exhibits strong over-dispersion.
The data contains 15.9\% zero values.

We then fit the data using an MD-INGARCH model with $p=q=1$ and $\{B_t\}$ following Bernoulli INGARCH(1,1).
The estimated values of parameters based on mixed Poisson QMLE are given in Table \ref{tab:realdata}.
\begin{table}[htbp]
\centering
\caption{Parameter estimates of the Poisson MD-INGARCH model for the rescaled daily price differences of Bank of America.
Numbers in parentheses are estimated standard errors.
%\red{Do we list ASE?}
}
\vspace{0.5em}
\label{tab:realdata}
\begin{tabular}{ccccccccc}
\hline
$\widehat c$ & $\widehat a$ & $\widehat b$ & $\widehat\omega_{1}$ & $\widehat\alpha_{1}$ & $\widehat\beta_{1}$ & $\widehat\omega_{2}$ & $\widehat\alpha_{2}$ & $\widehat\beta_{2}$ \\
\hline
0.035&0.010&0.930&0.079&0.143&0.813&0.282&0.112&0.824\\
%(11.58)&(1.00)&(1.40)&(9.38)&(1.84)&(0.80)&(15.54)&(1.75)&(0.85)\\
(0.18)&(0.02)&(0.02)&(0.14)&(0.03)&(0.01)&(0.24)&(0.03)&(0.01)\\
\hline
\end{tabular}
\end{table}
Given that the estimates $(\widehat{\alpha}_1, \widehat{\beta}_1)$ are close to $(\widehat{\alpha}_2, \widehat{\beta}_2)$, the fitted model exhibits only mild asymmetry, which is consistent with the observations from the histogram. 
The finding $\widehat\beta_{2} > \widehat\beta_{1}$ suggests that negative price movements have longer-lasting effects on future volatility than positive movements--a pattern consistent with the leverage effect in equity markets.
Conversely, $\widehat\alpha_1 > \widehat\alpha_2$ indicates that non-negative dynamics exhibit a slightly stronger reaction to recent shocks. Figure \ref{fig:pi_t} indicates that $\lambda_{1t}\skakko{\widehat{\bpsi }_{1n}}$ displays greater variability compared to $\lambda_{2t}\skakko{\widehat{\bpsi }_{2n}}$. 
Overall, the persistence levels measured by $\widehat\alpha_j + \widehat\beta_j$ are comparable across regimes: $\widehat\alpha_{1} + \widehat\beta_{1} = 0.956$ for positive shocks and $\widehat\alpha_{2} + \widehat\beta_{2} = 0.936$ for negative shocks, indicating near-unit-root behavior. This is consistent with the widely documented persistence in financial volatility.
Regarding the switching mechanism, the estimated parameter $\widehat{a} = 0.010$ is small, suggesting that the switching probability $\pi_t$ exhibits low variability over time. This behavior is visually supported by the estimated path of $\pi_t\skakko{\widehat{\bpsi }_{n}}$ in Figure \ref{fig:pi_t}. The near-unit-root estimate of $\widehat{c} + \widehat{a} + \widehat{b} = 0.975$ in the sign process suggests that the probability of a price increase is highly persistent, potentially reflecting slowly changing market sentiment or information flow.

\begin{figure}[htbp]
\begin{center}
\includegraphics[width=\linewidth]{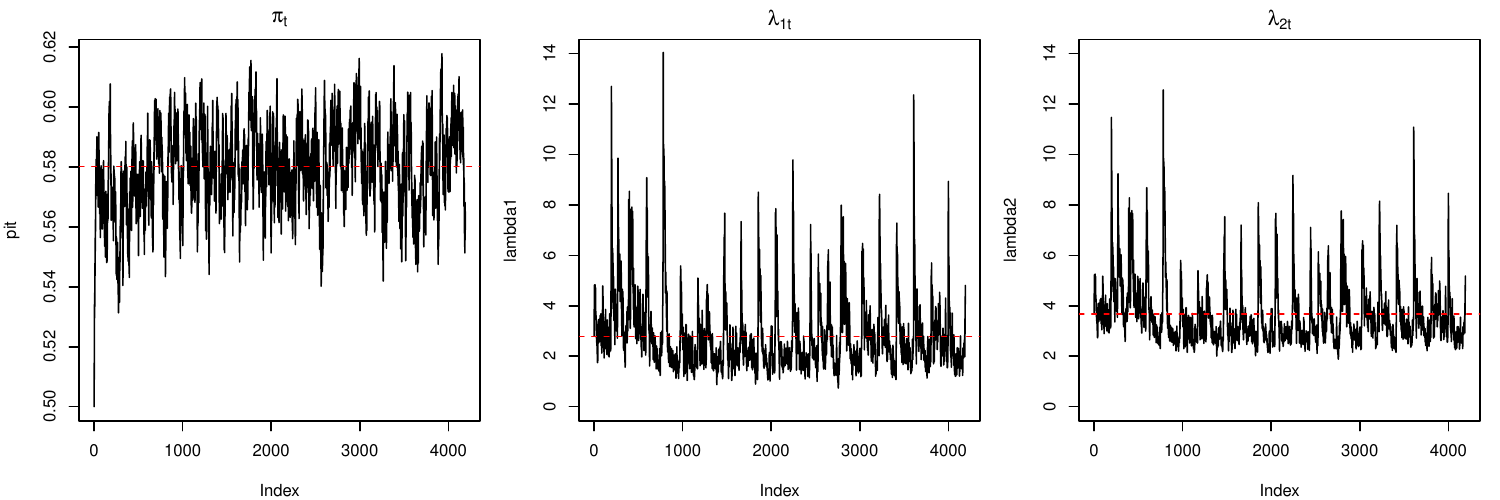}
\caption{
Plots of $\pi_t\skakko{\widehat{\bpsi }_{n}}$ (left), $\lambda_{1t}\skakko{\widehat{\bpsi }_{1n}}$ (middle), and $\lambda_{2t}\skakko{\widehat{\bpsi }_{w
2n}}$ (right), where the red dashed reference lines correspond to the empirical mean of $B_t$ (left), $X_{1t}$ (middle), $X_{2t}$ (right).
}
\label{fig:pi_t}
\end{center}
\end{figure}
The proposed portmanteau test with $B=5000$ yields $p_{1,n}=0.511$ and $p_{2,n}=0.655$, indicating no evidence against the fitted model.
Hence, we conclude that the fitted model adequately captures the dynamics of the tick data.

%Interestingly, the fitted model is strongly asymmetric (see Remark \ref{mele}), with a larger coefficient $\widehat\beta_2$ than $\widehat\beta_1$, which signifies that negative shocks are more persistent than positive ones. This is consistent with the leverage effect that is often observed with series of financial returns.

\subsection{Distributional adequacy via PIT diagnostics}
The Bank of America dataset exhibits overdispersion:
its overall sample mean and variance are 0.0790 and 24.28, respectively.
When decomposed into non-negative and negative components, the non-negative observations have a sample mean of 2.792
and a variance of 16.95, while the negative observations have
a sample mean of $-3.67$ and a variance of 10.17.
Both components therefore display pronounced overdispersion.
Any adequate model, therefore, needs to explicitly account for this feature.
To assess distributional adequacy, we construct non-randomized
probability integral transform (PIT) histograms following \cite{cf14},
and compare our proposed models with existing $\mathbb Z$-valued
time series models.

The existing models considered in the comparison are briefly summarized as follows.
\begin{enumerate}
    \item[] \cite{clz21}:
    a modified Skellam INGARCH model.
    \item[] \cite{ha21}:
    a Poisson INGARCH model multiplied by an i.i.d.\ Bernoulli sequence.
    \item[] \cite{xz22}:
    a shifted geometric INGARCH model multiplied by an i.i.d.\ sequence taking values in $\{-1,0,1\}$.
\end{enumerate}
We consider mixed Poisson and mixed negative binomial (NB) models.
Parameter estimation is based on Mixed Poisson QMLE.
For the mixed NB model, the dispersion parameters $r_1$ and $r_2$,
corresponding to the non-negative and negative parts, respectively,
are estimated by
\begin{align*}
\hat r_1
:=
\skakko{
\frac{1}{n}\sum_{t=1}^n
\frac{
\skakko{Y_t
-\widetilde{\lambda}_{1t}\skakko{\widehat{\bpsi} _{1n}}}^2
I_{\{Y_t\geq0\}}-
\widetilde{\pi }_{t}\left( \widehat{\bphi }_{n}\right)
\widetilde{\lambda}_{1t}\skakko{\widehat{\bpsi} _{1n}}
}{\widetilde{\pi }_{t}\left( \widehat{\bphi }_{n}\right)
\widetilde{\lambda}_{1t}^2\skakko{\widehat{\bpsi} _{1n}}
}
}^{-1}
\end{align*}
and
\begin{align*}
\hat r_2
:=
\skakko{
\frac{1}{n}\sum_{t=1}^n
\frac{
\skakko{Y_t
+\widetilde{\lambda}_{2t}\skakko{\widehat{\bpsi} _{2n}}}^2
I_{\{Y_t<0\}}-
\skakko{1-\widetilde{\pi }_{t}\left( \widehat{\bphi }_{n}\right)}
\skakko{\widetilde{\lambda}_{2t}\skakko{\widehat{\bpsi} _{2n}}-1}
}{
\skakko{1-\widetilde{\pi }_{t}\left( \widehat{\bphi }_{n}\right)}
\skakko{\widetilde{\lambda}_{2t}\skakko{\widehat{\bpsi} _{2n}}-1}^2
}
}^{-1}.
\end{align*}
These estimators are constructed by exploiting the fact that the discrepancy between nonparametric and NB-based conditional variance estimates forms a martingale difference sequence. The estimated dispersion parameters are $\hat r_1=0.804$ and $\hat r_2 =1.449$.

Figure~\ref{fig:PIT} displays the resulting non-randomized PIT histograms for the proposed models and competing approaches.
If the conditional distribution is correctly specified,
the PIT values should be approximately uniformly distributed on $(0,1)$.
Consequently, the relative frequency in each of the $J=10$ bins
of the PIT histogram is expected to be close to $1/J$.
The visual reference bands represent pointwise 95\% intervals
based on a normal approximation to the binwise frequencies under i.i.d.\ uniformity
and are included solely for visual guidance.

Both the proposed mixed Poisson model and the model of \cite{ha21} exhibit significant distortions around 0.5. %, corresponding to the boundaries between the non-negative and negative parts.
This pattern indicates that Poisson-based specifications fail to adequately capture the 15.9\% zero inflation. Although the data exhibits substantial over-dispersion, these models appear to prioritize fitting the extreme values at the expense of zero inflation. Due to the inherent mean-variance equality of the Poisson distribution, the need to accommodate over-dispersion forces an inflation of the mean parameter, which in turn leads to a severe under-estimation of the point mass at zero. 
The model of \cite{clz21} exhibits pronounced deviations from $1/J$ at both extremes. Although this model incorporates a specific modification for the zero mass to handle zero inflation, it remains fundamentally based on the Skellam distribution, which inherits the light-tailed nature of the Poisson distribution. 

In contrast, the proposed NB-based model and the geometric-based model of \cite{xz22} exhibit PIT histograms that are closer to uniformity. The superior performance of the proposed NB-based model can be attributed to the properties of the NB distribution. For a given mean, a smaller dispersion parameter simultaneously leads to a larger probability mass at zero and heavier tails. The small estimated values $\widehat{r}_1 = 0.804$ and $\widehat{r}_2 = 1.449$ allow the model to capture both the zero inflation and the heavy tails. 
Similarly, while the model of \cite{xz22} is based on the geometric distribution, which is a special case of the NB distribution with $r=1$, it achieves an excellent fit by directly assigning probability mass to zero through its specific multiplication mechanism. 
Overall, our proposed model achieves a comparable or superior distributional fit by utilizing the flexibility of the negative binomial specification.

\begin{figure}[htbp]
\begin{center}
\includegraphics[width=0.75\linewidth]{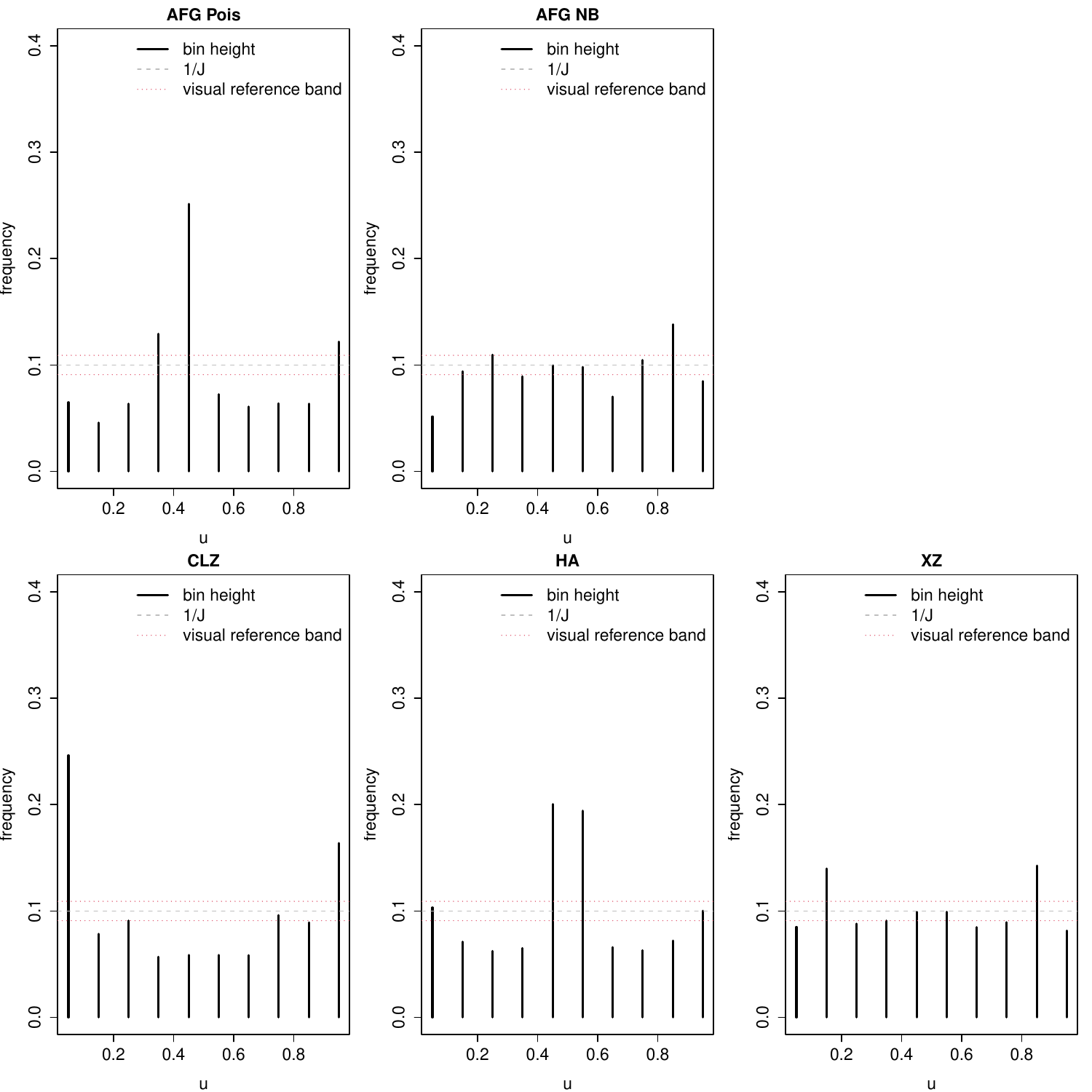}
\caption{
Non-randomized PIT histograms for the Bank of America data.
AFG Pois and AFG NB denote the proposed mixed Poisson and mixed negative binomial
INGARCH-type models, respectively,
while CLZ, HA, and XZ refer to the competing $\mathbb Z$-valued time series models
proposed by \cite{clz21}, \cite{ha21}, and \cite{xz22}, respectively.
The top row corresponds to the proposed models, whereas the bottom row shows
the competing approaches.
Under a correct conditional distributional specification, the PIT values are approximately uniformly distributed,
so that each bin has an expected height of $1/J$ with $J=10$.
The horizontal reference bands indicate pointwise 95\% intervals
based on a normal approximation to the binwise frequencies under i.i.d.\ uniformity
and are included solely for visual guidance.
}
\label{fig:PIT}
\end{center}
\end{figure}

\subsection{Out-of-sample evaluation of sign dynamics}
Out-of-sample evaluation is conducted to examine whether modeling sign dynamics via a Bernoulli INGARCH process improves predictive performance relative to two benchmarks: a fixed probability of $0.5$ and the expanding sample mean.

We employ the mean absolute error (MAE):
\begin{align*}
{\rm MAE}_1:=&
\frac{1}{n-m+1}
\sum_{t=m}^n
\left|B_t
- 
\widetilde{\pi }_{t}\left( \widehat{\bphi }_{t-1:1}\right) \right|,
\quad
{\rm MAE}_2:=
\frac{1}{n-m+1}
\sum_{t=m}^n
\left|B_t
- 
0.5\right|,\\
{\rm MAE}_3:=&
\frac{1}{n-m+1}
\sum_{t=m}^n
\left|B_t
- 
\frac{1}{t-1}\sum_{j=1}^{t-1}B_j
\right|,
\end{align*}
where $\widehat{\bphi }_{t-1:1}$ denotes the estimator based on the expanding subsample $X_1,\ldots,X_{t-1}$ with initial training sizes $m \in \{1000, 1500, 2000, 2500, 3000, 3500, 4000\}$.

Then, we apply the Diebold--Mariano test (see \citealp{forecast}) for the hypotheses
$$
H_0^{{\rm MAE}_2}: {\rm E}\skakko{{\rm MAE}_1-{\rm MAE}_2}=0
\quad
\text{vs}
\quad
H_1^{{\rm MAE}_2}: {\rm E}\skakko{{\rm MAE}_1-{\rm MAE}_2}<0
$$
and
$$
H_0^{{\rm MAE}_3}: {\rm E}\skakko{{\rm MAE}_1-{\rm MAE}_3}=0
\quad
\text{vs}
\quad
H_1^{{\rm MAE}_3}: {\rm E}\skakko{{\rm MAE}_1-{\rm MAE}_3}<0.
$$
\begin{table}[ht]
\centering
\caption{P-values of Diebold-Mariano test based on mean absolute error.}
\begin{tabular}{lccccccc}
\hline
$m$ & 1000 & 1500 & 2000 & 2500 & 3000 & 3500 & 4000 \\ \hline
$H_0^{{\rm MAE}_2}$ & $<0.001$ & $<0.001$ & $<0.001$ & $<0.001$ & $<0.001$ & $<0.001$ & $<0.001$ \\
$H_0^{{\rm MAE}_3}$ & 0.007 & 0.001 & 0.016 & 0.003 & 0.028 & 0.095 & 0.078 \\ \hline
\end{tabular}
\label{tab:dm_mae}
\end{table}
The results in Table \ref{tab:dm_mae} reveal that the Bernoulli INGARCH model outperforms the benchmark ${\rm MAE}_2$ with a significance level of 0.001 across all $m$. 
Moreover, the model also shows superior predictive performance compared to the benchmark ${\rm MAE}_3$ for $m \leq 3000$ at the 0.05 significance level. This suggests that the time-varying conditional probability $\pi_t$ captured by the INGARCH dynamics provides more accurate sign predictions than a simple static estimate of the sample mean.
For $m=3500$ and $4000$, although the INGARCH model still maintains lower MAE values, the p-values for $H_0^{{\rm MAE}_3}$ increase above 0.05. This may be attributed to the decreasing number of out-of-sample observations available for the test as $m$ increases, which reduces the power of the Diebold-Mariano test.

%\subsection{Out-of-sample evaluation of mean}

\section{Conclusion}\label{Conclusion}
This paper proposes a novel and versatile framework for $\mathbb{Z}$-valued time series--a class of data frequently encountered in financial econometrics, particularly in high-frequency price change analysis. 
By introducing a dynamic sign process within a semi-parametric specification, the model addresses important limitations of existing methods, notably the assumption of an independent sign selector. By allowing for time-varying and potentially asymmetric conditional absolute moments, the model can be viewed as a GARCH-type model for $\mathbb{Z}$-valued processes--a setting that is highly relevant to the fields of financial econometrics, market microstructure and risk management.
Conditions for stationarity, ergodicity, and $\beta$-mixing are derived. A robust Mixed Poisson quasi-maximum likelihood estimator is developed, and its consistency and asymptotic normality are established.  Diagnostic tools are also developed. The empirical relevance of the framework is demonstrated through Monte Carlo simulations and an application to high-frequency stock price changes, where the model successfully captures sign dependence, overdispersion, and excess zeros. 
Our analysis on the tick-by-tick price changes of Bank of America stock reveals several economically meaningful findings: 1) the estimated sign dynamics indicate strong persistence in the probability of price increases, challenging the i.i.d.\ sign assumption underlying existing models;
2) the volatility components exhibit asymmetry, which implies that negative shocks have longer-lasting effects on volatility than positive shocks—a classic ``leverage effect'' pattern in financial returns; 3)
out-of-sample evaluation using Diebold-Mariano tests demonstrates that modeling sign dynamics significantly improves prediction of price movement directions compared to usual benchmarks.

More generally, our model offers several tools for financial decision-making. The estimated sign probabilities $\widehat{\pi}_t$ can inform short-term trading strategies by predicting the direction of price movements. The volatility components $\widehat{\lambda}_{1t}$  and $\widehat{\lambda}_{2t}$  provide inputs for value-at-risk calculations that respect the discrete nature of price changes. Moreover, the asymmetry parameters allow regulators to assess whether negative price shocks disproportionately increase market fragility--a question of importance for financial stability monitoring.

In future work, the authors intend to develop a multivariate extension of the MD-INGARCH model that can accommodate vectors of signed count data. This framework is of great importance for financial applications, since no existing multivariate GARCH model can handle $\mathbb{Z}$-valued observations and the existing multivariate count time series (see \cite{fokianos2024multivariate}) do not directly model volatilities.

\section*{Acknowledgments} Christian Francq acknowledges research support from the French National Research Agency (ANR) under grant ANR-21-CE26-0007-01. 
Yuichi Goto acknowledges support from JSPS Grant-in-Aid for Early-Career Scientists (JP23K16851) and the Research Fellowship Promoting International Collaboration of the Mathematical Society of Japan.
The authors used AI-powered writing tools (including ChatGPT, DeepL Write, DeepSeek, and Gemini) to assist with proofreading and to improve the clarity of the writing.

\bibliographystyle{econ}
\bibliography{ref}

\newpage

% === ここからサプリメント用の設定（Section 8 を S1 にリセット） ===
\setcounter{section}{0}
\renewcommand{\thesection}{S\arabic{section}}

\counterwithout{equation}{section}
\setcounter{equation}{0}
\renewcommand{\theequation}{S\arabic{equation}}

\setcounter{figure}{0}
\renewcommand{\thefigure}{S\arabic{figure}}

\setcounter{table}{0}
\renewcommand{\thetable}{S\arabic{table}}
% ==================================================================

\begin{center}

{\Large  SUPPLEMENT TO}\\%[0.5em]
{\Large ``Mixed difference integer-valued GARCH model\\ 
for $\mathbb{Z}$-valued time series''}

\vspace{1.5em}

{\large
Abdelhakim Aknouche$^{1}$,  Christian Francq$^{2}$, and  Yuichi Goto$^{3}$
}

\vspace{1em}

{\small

$^{1}$Qassim University\\
\texttt{aknouche\_ab@yahoo.com}

\vspace{0.5em}

$^{2}$CREST and University of Lille\\
\texttt{christian.francq@ensae.fr}

\vspace{0.5em}

$^{3}$Kyushu University\\
\texttt{yuichi.goto@math.kyushu-u.ac.jp}
}\\
\vspace{1em}
\textbf{Abstract}
\end{center}
\vspace{-1.4em}
\begin{quote}
This document provides supplementary material to the paper ``Mixed difference integer-valued GARCH model for $\mathbb{Z}$-valued time series''.
More specifically, we provide all proofs and complementary illustrations.
\end{quote}

\section{Proofs}\label{sec:8}

\subsection{Proof of Proposition \ref{Pro3.1}}
Let $\left\{ U_{t},t\in \mathbb{Z%
}\right\} $ be an iid sequence of random variables uniformly distributed in $%
\left[ 0,1\right] $, such that $(U_{t})$ and $(B_{t})$ are independent. We
are going to define a solution of $\eqref{3.2a}$-$\eqref{3.2b}$ such that $%
\mathcal{F}_{t}:=\sigma (Y_{s};s\leq t)=\sigma (U_{s},B_{s};s\leq t)$. Note
that $B_{s}\in \mathcal{F}_{t}$ for all $s\leq t$ (since $B_{s}=\mathbbm{1}%
_{\{Y_{s}\geq 0\}}$) and that $\pi _{t}=P(B_{t}=1\mid B_{s},s<t)$. Define $%
Y_{t}^{(k)}=\lambda _{1t}^{(k)}=\lambda _{2t}^{(k)}=0$ for $k\leq 0$, and
for $k>0$ 
\begin{equation}
Y_{t}^{(k)}=B_{t}F_{\lambda _{1t}^{(k)}}^{1\,-}(U_{t})-(1-B_{t})F_{\lambda
_{2t}^{(k)}}^{2\,-}(U_{t})  \label{3.8}
\end{equation}%
where 
\begin{equation}
\lambda _{st}^{(k)}=\omega _{s}+\sum\limits_{i=1}^{q}\alpha
_{si}|Y_{t-i}^{(k-i)}|+\sum\limits_{j=1}^{p}\beta _{sj}\lambda
_{s,t-j}^{(k-j)}.  \label{3.9}
\end{equation}
By induction on $k$, we will show that:

i) when $B_{t}=1$ we have%
\begin{equation}
0\leq \lambda _{1t}^{(k-1)}\leq \lambda _{1t}^{(k)}  \label{3.10}
\end{equation}%
and%
\begin{equation}
0\leq Y_{t}^{(k-1)}=F_{\lambda _{1t}^{(k-1)}}^{1\,-}(U_{t})\leq F_{\lambda
_{1t}^{(k)}}^{1,-}(U_{t})=Y_{t}^{(k)};  \label{3.11}
\end{equation}

ii) when $B_{t}=0$ we have 
\begin{equation}
0\leq \lambda _{2t}^{(k-1)}\leq \lambda _{2t}^{(k)}  \label{3.12}
\end{equation}%
and%
\begin{equation}
Y_{t}^{(k)}=-F_{\lambda _{2t}^{(k)}}^{2\,-}(U_{t})\leq -F_{\lambda
_{2t}^{(k-1)}}^{2\,-}(U_{t})=Y_{t}^{(k-1)}\leq 0.  \label{3.13}
\end{equation}

Indeed, assuming the induction assumption $\lambda _{su}^{(k-1-i)}\leq
\lambda _{su}^{(k-i)}$ and $|Y_u^{(k-1-i)}|\leq |Y_u^{(k-i)}|$ for all $%
i\geq 1$ and $u\in \mathbb{Z}$, and using the property \eqref{3.1} of
monotonicity of $F_{\lambda }^{s-}$ with respect to $\lambda $ we have 
\begin{eqnarray*}
\lambda _{st}^{(k-1)} &=&\omega _{s}+\sum\limits_{i=1}^{q}\alpha
_{si}|Y_{t-i}^{(k-1-i)}|+\sum\limits_{j=1}^{p}\beta _{sj}\lambda
_{s,t-j}^{(k-1-j)} \\
&\leq &\omega _{s}+\sum\limits_{i=1}^{q}\alpha
_{si}|Y_{t-i}^{(k-i)}|+\sum\limits_{j=1}^{p}\beta _{1j}\lambda
_{s,t-j}^{(k-j)} =\lambda _{st}^{(k)},
\end{eqnarray*}%
which shows $\eqref{3.10}$ and $\eqref{3.12}$. Relationships $\eqref{3.11}$
and $\eqref{3.13}$ then follow from \eqref{3.1}.

Thus the sequences $\left(\lambda _{st}^{(k)}\right)_{k}$ and $%
\left(|Y_{t}^{(k)}|\right)_{k}$ are increasing. In addition,%
\begin{eqnarray*}
&&E\left( \left| Y_{t}^{(k)}-Y_{t}^{(k-1)}\right| \mathbbm{1}_{B_{t}=1}
\mid \mathcal{F}_{t-1}\right) 
=E\left\{ \left(F_{\lambda _{1t}^{(k)}}^{1\,-}(U_{t})- F_{\lambda
_{1t}^{(k-1)}}^{1\,-}(U_{t})\right)\mathbbm{1}_{B_{t}=1} \mid \mathcal{F}%
_{t-1}\right\} \\
&=&E\left\{ \left(F_{\lambda _{1t}^{(k)}}^{1\,-}(U_{t})- F_{\lambda
_{1t}^{(k-1)}}^{1\,-}(U_{t})\right)\mid \mathcal{F}_{t-1}\right\} E\left( %
\mathbbm{1}_{B_{t}=1} \mid \mathcal{F}_{t-1}\right) =\left( \lambda _{1t}^{(k)}-\lambda _{1t}^{(k-1)}\right)\pi_t 
\end{eqnarray*}%
and%
\begin{eqnarray*}
E\left( \left\vert Y_{t}^{(k)}-Y_{t}^{(k-1)}\right\vert \mathbbm{1}%
_{\{B_{t}=0\}} \mid \mathcal{F}_{t-1}\right) 
&=&\left( \lambda _{2t}^{(k)}-\lambda _{2t}^{(k-1)}\right)(1-\pi_t). 
\end{eqnarray*}%
We then have 
$$
E\left\vert Y_{t}^{(k)}-Y_{t}^{(k-1)}\right\vert \\
=E\left\{\pi_t\left( \lambda _{1t}^{(k)}-\lambda
_{1t}^{(k-1)}\right)\right\} +E\left\{(1-\pi_t )\left( \lambda
_{2t}^{(k)}-\lambda _{2t}^{(k-1)}\right) \right\}.
$$
As the sign of $Y_{t}^{(k)}$ does not vary with respect to $k>0$, we also
have 
\begin{equation*}
E\left( |Y_{t}^{(k)}|-|Y_{t}^{(k-1)}|\right) =E\left\{\pi_t\left( \lambda
_{1t}^{(k)}-\lambda _{1t}^{(k-1)}\right)\right\} +E\left\{(1-\pi_t )\left(
\lambda _{2t}^{(k)}-\lambda _{2t}^{(k-1)}\right) \right\}.
\end{equation*}

First, consider the case $p=q=1$. From \eqref{3.9}, we obtain 
\begin{align*}
&E\left( \lambda _{st}^{(k)}-\lambda _{st}^{(k-1)}\right) \\
=&\alpha _{s}E\left( |Y_{t-1}|^{(k-1)}-|Y_{t-1}|^{(k-2)}\right) +\beta
_{s}E\left( \lambda _{s,t-1}^{(k-1)}-\lambda _{s,t-1}^{(k-2)}\right) \\
=&\alpha _{s}\left\{ E\pi_{t-1}\left( \lambda _{1,t-1}^{(k-1)}-\lambda
_{1,t-1}^{(k-2)}\right) +E(1-\pi_{t-1} )\left( \lambda
_{2,t-1}^{(k-1)}-\lambda _{2,t-1}^{(k-2)}\right) \right\} +\beta _{s}E\left(
\lambda _{s,t-1}^{(k-1)}-\lambda_{s,t-1}^{(k-2)}\right) \\
=&\left\{%
\begin{array}{lll}
E(\alpha _{1}\pi_{t-1} +\beta _{1})\left( \lambda _{1,t-1}^{(k-1)}-\lambda
_{1,t-1}^{(k-2)}\right) +\alpha_{1}E(1-\pi_{t-1} ) \left( \lambda
_{2,t-1}^{(k-1)}-\lambda _{2,t-1}^{(k-2)}\right) & \mbox{if} & s=1 \\ 
\alpha _{2} E\pi_{t-1}\left( \lambda_{1,t-1}^{(k-1)}-\lambda
_{1,t-1}^{(k-2)}\right) + E\left( \alpha _{2}(1-\pi_{t-1} )+\beta
_{2}\right)\left( \lambda _{2,t-1}^{(k-1)}-\lambda _{2,t-1}^{(k-2)}\right) & %
\mbox{if} & s=2.%
\end{array}%
\right.
\end{align*}%
Let $\boldsymbol{\chi }_{t}^{(k)}=\boldsymbol{\lambda} _{t}^{(k)}-%
\boldsymbol{\lambda} _{t}^{(k-1)}$, with $\boldsymbol{\lambda}
_{t}^{(k)}=\left( \lambda _{1t}^{(k)},\lambda _{2t}^{(k)}\right)^\top$.
We have 
\begin{equation*}
E\boldsymbol{\chi }_{t}^{(k)} =E\left(\boldsymbol{A}_{t-1} \boldsymbol{\chi }%
_{t-1}^{(k-1)}\right) =E\left(\boldsymbol{A}_{t} \boldsymbol{\chi }%
_{t}^{(k-1)}\right)\leq \boldsymbol{A} E\boldsymbol{\chi }_{t}^{(k-1)}
\end{equation*}%
element by element, using the positivity of all coefficients in the vectors
and matrices, where 
\begin{equation*}
\boldsymbol{A}_t =\left( 
\begin{array}{cc}
\alpha _{1}\pi_t +\beta _{1} & \alpha_{1} (1-\pi_t ) \\ 
\alpha_{2}\pi_t & \alpha_{2}(1-\pi_t )+\beta _{2}%
\end{array}%
\right),\quad \boldsymbol{A} =\left( 
\begin{array}{cc}
\alpha _{1}\pi_1^+ +\beta _{1} & \alpha_{1} \pi_0^+ \\ 
\alpha_{2}\pi_1^+ & \alpha_{2}\pi_0^++\beta _{2}%
\end{array}%
\right).
\end{equation*}

If $\rho (\boldsymbol{A} )<1$ then $E\left( \boldsymbol{\chi }%
_{t}^{(k)}\right) \rightarrow 0$ as $k\rightarrow \infty $ at an exponential
rate and $(E\boldsymbol{\lambda}_{t}^{(k)})_{k}$ is a Cauchy sequence. This
entails that $\boldsymbol{\lambda}_t:=\lim_{k\to\infty}\uparrow\boldsymbol{%
\lambda} _{t}^{(k)}$ has a finite expectation, and thus is finite almost
surely. Since $\boldsymbol{\lambda}_t$ is a measurable function of $%
\{B_s,U_s; s<t\}$, the process $(\boldsymbol{\lambda}_t)_{t\in\mathbb{Z}}$
is stationary and ergodic. By the same arguments, $Y_t$ defined as the limit
of $Y_{t}^{(k)}$  in $L^{1}$ and also almost surely, as $k\to\infty$, is
stationary and ergodic, and  \eqref{moment1} holds true. For general orders $p$ and $q$, the results \eqref{3.7}-\eqref{moment1}  follow
similarly.

Now, if there exists a stationary process $
\{Y_t\}$ satisfying $\eqref{3.2a}$-$\eqref{3.2b}$ and \eqref{moment1} then 
$$(1-\sum_{j=1}^{p}\beta _{sj})E\lambda _{st}=\omega _{s}+\sum_{i=1}^{q}\alpha _{si}E\left\vert
Y_{t}\right\vert
$$
The condition \eqref{3.8a} is necessary for the existence of a solution $E\lambda_{st}\geq 0$ to this equation. 
$\square $

\subsection{Proof of Proposition \ref{Pro3.1CNS}}
We only need to show that the condition \eqref{3.7} is necessary; the rest is obvious.  For simplicity, we will write down the proof for $p=q=1$.
Letting $\boldsymbol{\lambda}
_{t}=\left( \lambda _{1t},\lambda _{2t}\right)^\top$
we have 
\begin{equation*}
\blambda_{t} = \bomega+ \bA \blambda_{t-1},\quad \bomega=(\omega_1,\omega_2)^\top
\end{equation*}%
If $\blambda:=E\blambda_t$ is finite, we have
$
\bA \blambda< \blambda
$
and  the result follows from Corollary 8.1.29 of \cite{horn2012matrix}.
$\square $

\subsection{Proof of Proposition \ref{prop:parameter}}
 If the sequences $(Y_{t})$ and $(\lambda _{st})$ are
stationary and have finite means, then we have
\begin{equation*}
\skakko{1-\sum\limits_{j=1}^{p}\beta _{sj}}
E\left( \lambda _{st}\right) ={\omega _{s}+\sum\limits_{i=1}^{q}\alpha
_{si}E|Y_{t-i}|},\quad\text{for $s=1,2$}
\end{equation*}%
and 
\begin{equation*}
\left( 1-\pi \frac{\sum\limits_{i=1}^{q}\alpha _{1i}}{1-\sum%
\limits_{j=1}^{p}\beta _{1j}}-\left( 1-\pi \right) \frac{%
\sum\limits_{i=1}^{q}\alpha _{2i}}{1-\sum\limits_{j=1}^{p}\beta _{2j}}%
\right) E|Y_{t}|=
\pi \frac{\omega _{1}}{1-\sum\limits_{j=1}^{p}\beta _{1j}}+(1-\pi )\frac{\omega _{2}}{1-\sum\limits_{j=1}^{p}\beta _{2j}}.
\end{equation*}%
By positivity of $E(\lambda _{st})$  and $E|Y_{t}|$, it is thus necessary that the following conditions hold 
\begin{equation*}
0<1-\sum\limits_{j=1}^{p}\beta _{sj} 
\quad \text{for $s=1,2$}\quad
\text{and}
\quad
\pi \frac{\sum\limits_{i=1}^{q}\alpha _{1i}}{1-\sum\limits_{j=1}^{p}\beta
_{1j}}+\left( 1-\pi \right) \frac{\sum\limits_{i=1}^{q}\alpha _{2i}}{%
1-\sum\limits_{j=1}^{p}\beta _{2j}}<1,
\end{equation*}%
which are exactly \eqref{3.8a} and \eqref{3.8b}, respectively. $\square $

\subsection{Proof of Proposition \ref{prop:NSC}}

First, we derive the explicit form of $\rho \left( \boldsymbol{A}^{(1)} \right)$. 
A simple algebra gives that
\begin{align*}
&|\boldsymbol{A}^{\left( 1\right) }-\lambda I_2|\\
=&
\begin{vmatrix}
\alpha_{11}\pi+\beta_{11} - \lambda & \alpha_{11}(1-\pi)\\ 
\alpha_{21}\pi & \alpha_{21}(1-\pi)+\beta_{21} -\lambda
\end{vmatrix}\\
=&
(\alpha_{11}\pi+\beta_{11} - \lambda)(\alpha_{21}(1-\pi)+\beta_{21} -\lambda)
-
\alpha_{11}(1-\pi)\alpha_{21}\pi\\
=&
\lambda^2
-
\lambda (\alpha_{11}\pi+\beta_{11} + \alpha_{21}(1-\pi)+\beta_{21})
+
(\alpha_{11}\pi+\beta_{11}) (\alpha_{21}(1-\pi)+\beta_{21})
-
\alpha_{11}(1-\pi)\alpha_{21}\pi\\
=&
\lambda^2
-
\lambda (\alpha_{11}\pi+\beta_{11} + \alpha_{21}(1-\pi)+\beta_{21})
+
\alpha_{11}\pi\beta_{21}
+
\alpha_{21}(1-\pi)\beta_{11}
+
\beta_{11}\beta_{21}.
\end{align*}
Since
\begin{align*}
&(\alpha_{11}\pi+\beta_{11} + \alpha_{21}(1-\pi)+\beta_{21})^2
- 4 (\alpha_{11}\pi\beta_{21}
+
\alpha_{21}(1-\pi)\beta_{11}
+
\beta_{11}\beta_{21})\\
=&
(\alpha_{11}\pi+\beta_{11} - \alpha_{21}(1-\pi) -\beta_{21})^2
+ 4 \alpha_{11}\pi \alpha_{21}(1-\pi)\geq 0,
\end{align*}
the equation $|\boldsymbol{A}^{\left( 1\right) }-\lambda I_2|=0$ have real roots as a function of $\lambda$. The solutions are given by
\begin{align*}
\lambda
=& 
\frac{1}{2}
\Big(
\alpha_{11}\pi+\beta_{11} + \alpha_{21}(1-\pi)+\beta_{21}\\
&\pm
\sqrt{
(\alpha_{11}\pi+\beta_{11} + \alpha_{21}(1-\pi)+\beta_{21})^2
- 4 (\alpha_{11}\pi\beta_{21}
+
\alpha_{21}(1-\pi)\beta_{11}
+
\beta_{11}\beta_{21})
}
\Big),
\end{align*}
and, thus,
\begin{align*}
\rho \left( \boldsymbol{A}^{(1)} \right)
=&
\frac{1}{2}
\Big(
 \alpha_{11}\pi+\beta_{11} + \alpha_{21}(1-\pi)+\beta_{21}\\
&+
\sqrt{
(\alpha_{11}\pi+\beta_{11} + \alpha_{21}(1-\pi)+\beta_{21})^2
- 4 (\alpha_{11}\pi\beta_{21}
+
\alpha_{21}(1-\pi)\beta_{11}
+
\beta_{11}\beta_{21})
}
\Big).
\end{align*}
Then, we observe that
\begin{align*}
&
\rho \left( \boldsymbol{A}^{(1)} \right)< 1\\
\Leftrightarrow&
\sqrt{
(\alpha_{11}\pi+\beta_{11} + \alpha_{21}(1-\pi)+\beta_{21})^2
- 4 (\alpha_{11}\pi\beta_{21}+\alpha_{21}(1-\pi)\beta_{11}+\beta_{11}\beta_{21})
}\\
&<
2 - (\alpha_{11}\pi+\beta_{11} + \alpha_{21}(1-\pi)+\beta_{21})\\
\Leftrightarrow&
- 4 (\alpha_{11}\pi\beta_{21}
+
\alpha_{21}(1-\pi)\beta_{11}
+
\beta_{11}\beta_{21})
<
4 - 4(\alpha_{11}\pi+\beta_{11} + \alpha_{21}(1-\pi)+\beta_{21})\\
&\text{and}\quad
\alpha_{11}\pi+\beta_{11} + \alpha_{21}(1-\pi)+\beta_{21} < 2\\
\Leftrightarrow&
\alpha_{11}\pi(1-\beta_{21})
+
\alpha_{21}(1-\pi)(1-\beta_{11})
<
(1-\beta_{11})(1-\beta_{21})\\
&\text{and}\quad
\alpha_{11}\pi+\beta_{11} + \alpha_{21}(1-\pi)+\beta_{21} < 2.
\end{align*}

Suppose that
\eqref{3.7} holds.
Since every element of ${\rm diag}(\beta_{11},\beta_{21})$ is smaller than the corresponding element of  $\rho\left(\bA^{(1)}\right)$, 
it follows from Theorem 8.1.18 of \cite{horn2012matrix} that 
$\beta_{11},\beta_{21}<1$. This is exactly the condition \eqref{3.8a}.
Under \eqref{3.8a}, the condition
$\alpha_{11}\pi(1-\beta_{21})
+
\alpha_{21}(1-\pi)(1-\beta_{11})
<
(1-\beta_{11})(1-\beta_{21})$ is equivalent to \eqref{3.8b}.
Thus, we conclude that 
\eqref{3.7} implies
both \eqref{3.8a} and \eqref{3.8b}.

Conversely, suppose \eqref{3.8a} and \eqref{3.8b} hold.
Then, the inequality
$\alpha_{11}\pi(1-\beta_{21})
+
\alpha_{21}(1-\pi)(1-\beta_{11})
<
(1-\beta_{11})(1-\beta_{21})$ is automatically satisfied.
It remains to check 
the condition $\alpha_{11}\pi+\beta_{11} + \alpha_{21}(1-\pi)+\beta_{21} < 2$
is also implied by \eqref{3.8a} and \eqref{3.8b}.
Indeed, we can see that
\begin{align*}
&
\frac{\alpha_{11}\pi}{1-\beta_{11}}
+
\frac{\alpha_{21}(1-\pi)}{1-\beta_{21}}
<1
\text{ and }
0<\beta_{11},\beta_{21}<1\\
\Rightarrow&
\frac{\alpha_{11}\pi}{1-\beta_{11}}<1,\quad
\frac{\alpha_{21}(1-\pi)}{1-\beta_{21}}<1,
\text{ and }
0<\beta_{11},\beta_{21}<1\\
\Leftrightarrow&
{\alpha_{11}\pi}<1-\beta_{11},\quad
{\alpha_{21}(1-\pi)}<{1-\beta_{21}},
\text{ and }
0<\beta_{11},\beta_{21}<1\\
\Rightarrow&
{\alpha_{11}\pi}+\beta_{11}+
{\alpha_{21}(1-\pi)}+{\beta_{21}}<2,
\text{ and }
0<\beta_{11},\beta_{21}<1.
\end{align*}
Thus, \eqref{3.8a} and \eqref{3.8b} together imply \eqref{3.7}. This completes the proof. $\square $

\subsection{Proof of Proposition~\ref{mixingNL}}
Let $\left\{ U_{t},t\in \mathbb{N
}\right\} $ and $\left\{ V_{t},t\in \mathbb{N
}\right\} $  be two independent sequences of iid random variables uniformly distributed in $\left[ 0,1\right] $.
For $k=1,2$ define 
$\{B_{t}^{[k]}, t\geq 0\}$ and $
\{Y_{t}^{[k]}, t\geq 0\}$  by 
\begin{eqnarray}  \label{rouge}
B_{t}^{[k]}&=&\mathbbm{1}_{V_{t}>1-\pi_t^{[k]}},\quad Y_{t}^{[k]}=B_{t}^{[k]}F_{\lambda_{1t}^{[k]}}^{1,%
\,-}(U_{t})-(1-B_{t}^{[k]})F_{%
\lambda_{2t}^{[k]}}^{2,\,-}(U_{t}), \\
\pi_t^{[k]}&=&c+a B_{t-1}^{[k]} +b\pi_{t-1}^{[k]},\quad \lambda_{st}^{[k]}=\omega _{s}+\sum\limits_{i=1}^{q}\alpha
_{si}|Y_{t-i}^{[k]}|+\sum\limits_{j=1}^{p}\beta
_{sj}\lambda_{s,t-j}^{[k]},
\end{eqnarray}
for $t> 0$, where 
\begin{equation*}
\boldsymbol{Z}_{0}^{[1]}=\left(B^{[1]}_{0},\pi^{[1]}_{0},Y^{[1]}_{0},%
\ldots,Y^{[1]}_{1-q},\lambda^{[1]}_{1,0},
\ldots,\lambda^{[1]}_{1,1-p},\lambda^{[1]}_{2,0},
\ldots,\lambda^{[1]}_{2,1-p}\right)
\end{equation*}
and 
\begin{equation*}
\boldsymbol{Z}_{0}^{[2]}=\left(B^{[2]}_{0},\pi^{[2]}_{0},Y^{[2]}_{0},%
\ldots,Y^{[2]}_{1-q},\lambda^{[2]}_{1,0},
\ldots,\lambda^{[2]}_{1,1-p},\lambda^{[2]}_{2,0},
\ldots,\lambda^{[2]}_{2,1-p}\right)
\end{equation*}
are independent, independent of $(U_t)$ and $(V_t)$, and follow the law of \begin{equation*}
\boldsymbol{Z}_t=\left(B_t,\pi_t,Y_{t},\ldots,Y_{t-q},\lambda_{1t},\ldots, 
\lambda_{1,t-p},\lambda_{2t},\ldots,  \lambda_{2,t-p}\right). 
\end{equation*} The
distribution of $Y_{0:\infty}^{[k]}$ is thus equal to that of $%
Y_{0:\infty},$ and we have $P(Y_{h:\infty}\in A)=P(Y_{h:\infty}^{[2]}\in A\mid \boldsymbol{Z}_{0}^{[1]})$. We also have $P(Y_{h:\infty}\in A\mid Y_{-\infty:0})=P(Y_{h:\infty}\in A\mid Z_{0})=P(Y_{h:\infty}^{[1]}\in A\mid \boldsymbol{Z}_{0}^{[1]})$.

By the coupling arguments used to show (5.9) in Neumann (2011) 
(see also (5.6) in Davis and Liu (2016) or the proof of Theorem 3.3 in 
\cite{af21}), we then have 
\begin{align*}
\beta_{Y}(h)&=E\sup_{A\in \mathcal{B}}
\left|E\left(\mathbbm{1}_{Y^{[1]}_{h:\infty}\in A}\mid 
\boldsymbol{Z}^{[1]}_{0}\right)-E\left(\mathbbm{1}_{Y_{h:\infty}^{[2]}\in A}\mid \boldsymbol{Z}_{0}^{[1]}\right)\right| \\
&=E\sup_{A\in \mathcal{B}} \left|E\left(\left\{\mathbbm{1}_{Y^{[1]}_{h:\infty}\in A}-\mathbbm{1}_{Y_{h:\infty}^{[2]}\in A}\right\}\mid \boldsymbol{Z}%
^{[1]}_{0}\right)\right| \leq E\left(E\left\{\mathbbm{1}_{Y^{[1]}_{h:%
\infty}\neq Y_{h:\infty}^{[2]}}\mid \boldsymbol{Z}%
^{[1]}_{0}\right\}\right) \\
&=E
\mathbbm{1}_{Y_{h:\infty}^{[1]}\neq Y_{h:\infty}^{[2]}}
\leq \sum_{k=0}^{\infty}E%
\left|Y^{[1]}_{h+k}- Y^{[2]}_{h+k}\right|\mathbbm{1}_{\mathbb{B}_h}+P\left(
\overline{\mathbb{B}_h}\right).
\end{align*}
where $\mathbb{B}_h$ is the event $\left\{B_{[{h}/{2}]:
	\infty}^{[1]}= B_{[{h}/{2}]:\infty}^{[2]}\right\}$, $\overline{\mathbb{B}_h}$ the complement of $\mathbb{B}_h$,  and $[{h}/{2}]$ denotes the integer part of ${h}/{2}$.
The last inequality holds because $\left|Y^{[1]}_{h+k}-
Y^{[2]}_{h+k}\right|$ is valued in $\mathbb{N}$. For $t>0$, let ${\cal F}^*_{t-1}$ be the sigma-field generated by $\{U_s,V_s: 1\leq s<t\}$, $\boldsymbol{Z}^{[1]}_{0}$, and $\boldsymbol{Z}^{[2]}_{0}$. 

For $t>0$, we have
$$E\left(\left|B^{[1]}_{t}- B^{[2]}_{t}\right|\mid {\cal F}^*_{t-1}\right)=\left|\pi^{[1]}_{t}- \pi^{[2]}_{t}\right|=\left|a\left(B^{[1]}_{t-1}- B^{[2]}_{t-1}\right)+b\left(\pi^{[1]}_{t-1}- \pi^{[2]}_{t-1}\right)\right|$$ and thus
$$P\left(B^{[1]}_{t}\neq B^{[2]}_{t}\right)=E\left|B^{[1]}_{t}- B^{[2]}_{t}\right|\leq (a+b)E\left|B^{[1]}_{t-1}- B^{[2]}_{t-1}\right|.$$
It follows that
$$P\left(
\overline{\mathbb{B}_h}\right)\leq \sum_{k=0}^{\infty}E
\left|B^{[1]}_{[h/2]+k}- B^{[2]}_{[h/2]+k}\right|\leq K\varrho^h,$$
with $\varrho>\sqrt{a+b}$, $\varrho\in(0,1)$, and $K=1/(1-\varrho^2).$

Let $\mathbb{B}_{[{h}/{2}]:
	t}$ be the event $\left\{B_{[{h}/{2}]:
	t}^{[1]}= B_{[{h}/{2}]:
	t}^{[2]}\right\}$ for $t\geq[{h}/{2}]$.
Now, using \eqref{3.1}, for $j=1,2,$ we have
\begin{eqnarray*}
	\left|Y^{[1]}_{t}- Y^{[2]}_{t}\right|\mathbbm{1}_{B_t^{[1]}=B_t^{[2]}=2-j}&=&\left\{F_{\lambda_{jt}^{[1]}}^{j,
	\,-}(U_{t})-F_{\lambda_{jt}^{[2]}}^{j,
	\,-}(U_{t})\right\}\mathbbm{1}_{B_t^{[1]}=B_t^{[2]}=2-j}\mathbbm{1}_{\lambda_{jt}^{[1]}\geq \lambda_{jt}^{[2]}}\\&&+\left\{F_{\lambda_{jt}^{[2]}}^{j,
	\,-}(U_{t})-F_{\lambda_{jt}^{[1]}}^{j,
	\,-}(U_{t})\right\}\mathbbm{1}_{B_t^{[1]}=B_t^{[2]}=2-j}\mathbbm{1}_{\lambda_{jt}^{[1]}< \lambda_{jt}^{[2]}}.
\end{eqnarray*}
Therefore 
\begin{eqnarray*}
E\skakko{\left|Y^{[1]}_{t}- Y^{[2]}_{t}\right|
\mathbbm{1}_{\mathbb{B}_{[{h}/{2}]:t}}}
&=&
\sum_{j=1,2}
E\skakko{E\left(\left|Y^{[1]}_{t}- Y^{[2]}_{t}\right|
\mathbbm{1}_{B_t^{[1]}=B_t^{[2]}=2-j}
\mid
{\cal F}^*_{t-1}\right)
\mathbbm{1}_{\mathbb{B}_{[{h}/{2}]:t-1}}}
\\
&=&
\sum_{j=1,2}
E\left\{\left|\lambda_{jt}^{[1]}-\lambda_{jt}^{[2]}\right|
\mathbbm{1}_{\mathbb{B}_{[{h}/{2}]:t-1}}
E\left(\mathbbm{1}_{B_t^{[1]}=B_t^{[2]}=2-j}\mid {\cal F}^*_{t-1}\right)\right\}
	\end{eqnarray*}
Since $$P\left(B_t^{[1]}=B_t^{[2]}=1\mid {\cal F}^*_{t-1}\right)\leq E\left(B_t^{[1]}\mid  {\cal F}^*_{t-1}\right) =\pi_t^{[1]}\leq \pi_1^+$$ and $$P\left(B_t^{[1]}=B_t^{[2]}=0\mid {\cal F}^*_{t-1}\right)\leq 1-E\left(B_t^{[1]}\mid  {\cal F}^*_{t-1}\right) =1-\pi_t^{[1]}\leq \pi_0^+,$$ we then have 
\begin{eqnarray}\label{decadix}
E\left|Y^{[1]}_{t}- Y^{[2]}_{t}\right|
\mathbbm{1}_{\mathbb{B}_{[{h}/{2}]:t}}
&\leq&\boldsymbol{\pi}^\top E\skakko{\boldsymbol{d}_{t}
\mathbbm{1}_{\mathbb{B}_{[{h}/{2}]:t-1}}},
\end{eqnarray}
where $\boldsymbol{\pi}=(\pi_1^+,\pi_0^+)^\top$ and 
\begin{equation*}
\boldsymbol{d}_{t}=\left(\left|\lambda_{1t}^{[1]}-
\lambda_{1t}^{[2]}\right|,\left|\lambda_{2t}^{[1]}-
\lambda_{2t}^{[2]}\right|\right)^\top.
\end{equation*}
For $t\geq  [h/2]$, 
\begin{equation*}
E\left|Y^{[1]}_{t}- Y^{[2]}_{t}\right|
\mathbbm{1}_{\mathbb{B}_{[{h}/{2}]:t}}
=E\left||Y^{[1]}_{t}|-
|Y^{[2]}_{t}|\right|
\mathbbm{1}_{\mathbb{B}_{[{h}/{2}]:t}}
\end{equation*}
because $Y^{[1]}_{t}$ and $%
Y^{[2]}_{t}$ have the same sign. In the case $p=q=1$, 
we have 
\begin{eqnarray*}
\lambda_{1,t}^{[1]}-\lambda_{1,t}^{[2]}&=&\alpha_{1}%
\left(|Y_{t-1}^{[1]}|-|Y_{t-1}^{[2]}|\right)+\beta_{1}\left(%
\lambda_{1,t-1}^{[1]}-\lambda_{1,t-1}^{[2]}\right).
\end{eqnarray*}
Therefore, for $t>  [h/2]-1$, using \eqref{decadix} and noting that $\mathbbm{1}_{\mathbb{B}_{[{h}/{2}]:t-1}}\leq \mathbbm{1}_{\mathbb{B}_{[{h}/{2}]:t-2}}$
\begin{align*}
&E\left|\lambda_{1,t}^{[1]}-\lambda_{1,t}^{[2]}\right|
\mathbbm{1}_{\mathbb{B}_{[{h}/{2}]:t-1}}
\leq 
\alpha_{1}
E\left|Y^{[1]}_{t-1}-
Y^{[2]}_{t-1}\right|
\mathbbm{1}_{\mathbb{B}_{[{h}/{2}]:t-1}}
+
\beta_{1}
E\left|\lambda_{1,t-1}^{[1]}-%
\lambda_{1,t-1}^{[2]}\right|
\mathbbm{1}_{\mathbb{B}_{[{h}/{2}]:t-1}}\\
\leq&\;
\alpha_{1}\pi_1^+
E\left|\lambda_{1,t-1}^{[1]}-
\lambda_{1,t-1}^{[2]}\right|
\mathbbm{1}_{\mathbb{B}_{[{h}/{2}]:t-2}}
+
\alpha_{1}\pi_0^+
E\left|\lambda_{2,t-1}^{[1]}-%
\lambda_{2,t-1}^{[2]}\right|
\mathbbm{1}_{\mathbb{B}_{[{h}/{2}]:t-2}}
\\&+\beta_{1}
E\left|\lambda_{1,t-1}^{[1]}-\lambda_{1,t-1}^{[2]}%
\right|
\mathbbm{1}_{\mathbb{B}_{[{h}/{2}]:t-2}}.
\end{align*}
Similarly, for $t> [h/2]-1$, 
\begin{align*}
&E\left|\lambda_{2,t}^{[1]}-\lambda_{2,t}^{[2]}\right|
\mathbbm{1}_{\mathbb{B}_{[{h}/{2}]:t-1}}
\leq
\alpha_{2}\pi_1^+
E\left|\lambda_{1,t-1}^{[1]}-
\lambda_{1,t-1}^{[2]}\right|
\mathbbm{1}_{\mathbb{B}_{[{h}/{2}]:t-2}}
\\&+
\alpha_{2}\pi_0^+
E\left|\lambda_{2,t-1}^{[1]}-%
\lambda_{2,t-1}^{[2]}\right|
\mathbbm{1}_{\mathbb{B}_{[{h}/{2}]:t-2}}
+\beta_{2}E\left|\lambda_{2,t-1}^{[1]}-\lambda_{2,t-1}^{[2]}%
\right|
\mathbbm{1}_{\mathbb{B}_{[{h}/{2}]:t-2}}.
\end{align*}
We thus have, for $t> [h/2]-1$, 
\begin{eqnarray*}
E\boldsymbol{d}_{t}
\mathbbm{1}_{\mathbb{B}_{[{h}/{2}]:t-1}}
\leq 
\boldsymbol{A} E\boldsymbol{d}_{t-1}
\mathbbm{1}_{\mathbb{B}_{[{h}/{2}]:t-2}}
\leq \boldsymbol{A}^{t-[h/2]-1}E\boldsymbol{d}_{[h/2]+1}
\mathbbm{1}_{\mathbb{B}_{[{h}/{2}]:[h/2]}}.
\end{eqnarray*}
By \eqref{moment1}, the vector $E\boldsymbol{d}_{t}$ is finite. It follows that, for  $h> 2$,
\begin{align*}
&\sum_{k=0}^{\infty}E%
\left|Y^{[1]}_{h+k}- Y^{[2]}_{h+k}\right|
\mathbbm{1}_{\mathbb{B}_{h}}
\leq
\sum_{k=0}^{\infty}E%
\left|Y^{[1]}_{h+k}- Y^{[2]}_{h+k}\right|
\mathbbm{1}_{\mathbb{B}_{[{h}/{2}]:h+k}}\\
\leq&\;
\boldsymbol{\pi}^\top
\sum_{k=0}^{\infty}
E%
\bd_{h+k}
\mathbbm{1}_{\mathbb{B}_{[{h}/{2}]:h+k-1}}
\leq
\boldsymbol{\pi}^\top
\boldsymbol A^{h-[h/2]}
\sum_{k=0}^{\infty}
\boldsymbol A^{k-1}
E%
\bd_{[h/2]+1}
\mathbbm{1}_{\mathbb{B}_{[{h}/{2}]:[h/2]}}
\leq
K\varrho^h,
\end{align*}
where $\varrho\in\left(\rho^2(\boldsymbol{A}),1\right)$, and the result follows similarly in the general $(p,q)$ case. $\qquad\square$ %\end{proof}

\subsection{Proof of Theorem \ref{thm:est}}
 Let $L_{n}\left( \btheta \right) $ and $
\ell_{t}\left( \btheta \right) $ as $\widetilde{L}_{n}\left( \btheta
\right) $ and $\widetilde{\ell_{t}}\left( \btheta \right) $ in \eqref{4.7},
replacing $\widetilde{\lambda }_{st}\left( \btheta \right) $ with $\lambda
_{st}\left( \btheta \right) $.
The proof of the consistency result \eqref{4.9} follows from Lemma~\ref{lem4.3} below,  using
standard compactness arguments.

Regarding the asymptotic normality result \eqref{4.10}, note first that
the gradient and Hessian of $\ell_{t}\left( \btheta \right) $ have the form%
\begin{eqnarray*}
\frac{\partial \ell_{t}\left( \btheta \right) }{\partial \bphi } &=&\frac{%
\partial \pi _{t}\left( \bphi \right) }{\partial \bphi }\frac{1}{\pi
_{t}\left( \bphi \right) }\mathbbm{1}_{Y_{t}\geq 0}-\frac{1}{1-\pi
_{t}\left( \bphi \right) }\frac{\partial \pi _{t}\left( \bphi \right) }{%
\partial \bphi }\mathbbm{1}_{Y_{t}<0} \\
\frac{\partial \ell_{t}\left( \btheta \right) }{\partial \bpsi _{1}} &=&\left(
\left( \frac{Y_{t}}{\lambda _{1t}\left( \bpsi _{1}\right) }-1\right) \frac{%
\partial \lambda _{1t}\left( \bpsi _{1}\right) }{\partial \bpsi _{1}}\right)
\mathbbm{1}_{Y_{t}\geq 0} \\
\frac{\partial \ell_{t}\left( \btheta \right) }{\partial \bpsi _{2}} &=&\left(
\left( -\frac{Y_{t}+1}{\lambda _{2t}\left( \bpsi _{2}\right) -1}-1\right) 
\frac{\partial \lambda _{2t}\left( \bpsi _{2}\right) }{\partial \bpsi _{2}}%
\right) \mathbbm{1}_{Y_{t}<0}
\end{eqnarray*}%
and%
\begin{eqnarray*}
\frac{\partial ^{2}\ell_{t}\left( \btheta \right) }{\partial \bphi \partial \bphi
^\top} &=&\left( \frac{1}{\pi _{t}\left( \bphi \right) }\frac{\partial
^{2}\pi _{t}\left( \bphi \right) }{\partial \bphi \partial \bphi ^\top}-%
\frac{1}{\pi _{t}^{2}\left( \bphi \right) }\frac{\partial \pi _{t}\left(
\bphi \right) }{\partial \bphi }\frac{\partial \pi _{t}\left( \bphi \right) }{%
\partial \bphi ^\top}\right) \mathbbm{1}_{Y_{t}\geq 0} \\
&&-\left( \frac{1}{1-\pi _{t}\left( \bphi \right) }\frac{\partial ^{2}\pi
_{t}\left( \bphi \right) }{\partial \bphi \partial \bphi ^\top}+\frac{1}{%
\left( 1-\pi _{t}\left( \bphi \right) \right) ^{2}}\frac{\partial \pi
_{t}\left( \bphi \right) }{\partial \bphi }\frac{\partial \pi _{t}\left( \bphi
\right) }{\partial \bphi ^\top}\right) \mathbbm{1}_{Y_{t}<0} \\
\frac{\partial ^{2}\ell_{t}\left( \btheta \right) }{\partial \bpsi _{1}\partial
\bpsi _{1}^\top} &=&\left( \left( \frac{Y_{t}}{\lambda _{1t}\left( \bpsi
_{1}\right) }-1\right) \frac{\partial ^{2}\lambda _{1t}\left( \bpsi
_{1}\right) }{\partial \bpsi _{1}\partial \bpsi _{1}^\top}-\frac{Y_{t}}{%
\lambda _{1t}^{2}\left( \bpsi _{1}\right) }\frac{\partial \lambda
_{1t}\left( \bpsi _{1}\right) }{\partial \bpsi _{1}}\frac{\partial \lambda
_{1t}\left( \bpsi _{1}\right) }{\partial \bpsi _{1}^\top}\right) \mathbbm{1}_{Y_{t}\geq 0} \\
\frac{\partial ^{2}\ell_{t}\left( \btheta \right) }{\partial \bpsi _{2}\partial
\bpsi _{2}^\top} &=&\left( \left( -\frac{Y_{t}+1}{\lambda _{2t}\left(
\bpsi _{2}\right) -1}-1\right) \frac{\partial^{2} \lambda _{2t}\left( \bpsi
_{2}\right) }{\partial \bpsi _{2}\partial \bpsi _{2}^\top}\right.\\&&\left.-\frac{Y_{t}+1%
}{\left( \lambda _{2t}\left( \bpsi _{2}\right) -1\right) ^{2}}\frac{\partial
\lambda _{2t}\left( \bpsi _{2}\right) }{\partial \bpsi _{2}}\frac{\partial
\lambda _{2t}\left( \bpsi _{2}\right) }{\partial \bpsi _{2}^\top}\right)
\mathbbm{1}_{Y_{t}<0} \\
\frac{\partial ^{2}\ell_{t}\left( \btheta \right) }{\partial \bpsi _{1}\partial
\bpsi _{2}^\top} &=&0\text{, }\quad \frac{\partial ^{2}\ell_{t}\left( \btheta
\right) }{\partial \bphi \partial \bpsi _{s}^\top}=0\text{, }s=1,2.
\end{eqnarray*}
Note that $\frac{\partial \ell_{t}\left( \btheta_0 \right) }{\partial \btheta }=\bDelta_t\bxi_t$.  Taylor's expansion with Lemmas \ref{lem4.4}--\ref{lem4.6} yields the conclusion.

\subsection{Lemmas for Theorem~\ref{thm:est}}

\begin{lem}\label{lem4.1}
\textit{Under \textbf{A1} and \textbf{A4}}%
\begin{equation}
\sup\limits_{\btheta \in \bTheta }\left\vert \widetilde{L}_{n}\left( \btheta
\right) -L_{n}\left( \btheta \right) \right\vert \overset{a.s.}{\underset{%
n\rightarrow \infty }{\rightarrow }}0\text{.}  \label{4.11}
\end{equation}
\end{lem}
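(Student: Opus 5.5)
We have to show that the initial values are asymptotically irrelevant, uniformly on $\bTheta$. The plan is to bound $|\widetilde\ell_t(\btheta)-\ell_t(\btheta)|$ by a geometrically decaying random term and then sum over $t$.

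\textbf{Step 1: the proxies converge geometrically, uniformly in $\btheta$.} Vectorize the recursion \eqref{4.1} using the companion matrix $\bB_s$. For $t\geq 1$, the difference $\widetilde\lambda_{st}(\bpsi_s)-\lambda_{st}(\bpsi_s)$ is obtained by applying $\bB_s^t$ to the vector of initial discrepancies, which involves $\widetilde\lambda_{s,1-j}-\lambda_{s,1-j}$ and the differences $|Y_{1-i}|$ between the fixed and the true presample values. By \eqref{4.2bis}, compactness (\textbf{A4}) and continuity of the spectral radius, there exist $K>0$ and $\rho\in(0,1)$ with $\sup_{\bpsi_s}\|\bB_s^t\|\leq K\rho^t$. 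Hence
$$\sup_{\bpsi_s\in\bPsi_s}|\widetilde\lambda_{st}(\bpsi_s)-\lambda_{st}(\bpsi_s)|\leq K\rho^t\zeta_s,$$
where $\zeta_s$ depends only on the initial values and on $\sup_{\bpsi_s}\lambda_{s,1-j}(\bpsi_s)$. This last supremum is finite a.s.: expanding $\lambda_{st}(\bpsi_s)$ as an $\ell^1$-series in $|Y_{t-k}|$ with geometrically decaying weights, and using $E|Y_t|<\infty$ from \eqref{moment1}, gives it a finite mean. Likewise $|\widetilde\pi_t(\bphi)-\pi_t(\bphi)|\leq b^t|\widetilde\pi_0-\pi_0(\bphi)|\leq \bar b^{\,t}$, with $\bar b=\max_{\bPhi} b<1$, since both quantities lie in $[0,1]$.

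\textbf{Step 2: the log-likelihood terms are Lipschitz in the intensities.} The intensities are bounded below uniformly: $\lambda_{1t},\widetilde\lambda_{1t}\geq\underline\omega_1>0$, and, by the argument preceding Example~\ref{ex:Pois} applied on the compact set, $\lambda_{2t}-1$ and $\widetilde\lambda_{2t}-1$ are at least some $\underline c>0$. Also $\pi_t(\bphi)\in[\underline c_\pi,\bar\pi]\subset(0,1)$, where $\underline c_\pi=\min c>0$ and $\bar\pi\leq \max(a+b+c)<1$, because $\bPhi$ is compact and contained in the stationarity region. The mean value theorem then gives
$$|\log x-\log y|\leq |x-y|/\min(x,y),$$
so that
$$\sup_{\btheta}|\widetilde\ell_t-\ell_t|\leq C\bar b^{\,t}+K\rho^t(\zeta_1+\zeta_2)\bigl(1+|Y_t|\bigr)$$
for constants $C,K$.

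\textbf{Step 3: summation.} It then suffices to show $\sum_t\rho^t|Y_t|<\infty$ a.s., after which Cesàro averaging gives \eqref{4.11}. This holds because $E|Y_t|<\infty$ by \eqref{moment1} and stationarity, so $E\sum_t\rho^t|Y_t|<\infty$; alternatively, Borel--Cantelli gives $\rho^{t/2}|Y_t|\to0$ a.s.

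The main obstacle is the uniformity in Step~1, namely that $\sup_{\bpsi_s}\lambda_{s0}(\bpsi_s)$ is a.s. finite. This should be handled by the explicit series representation, the uniform geometric bound on $\bB_s^k$, and the first-moment bound from Proposition~\ref{Pro3.1}.
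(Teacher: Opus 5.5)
Your proposal is correct and takes essentially the paper's route. Both arguments use uniform geometric decay of $\widetilde\lambda_{st}-\lambda_{st}$ and $\widetilde\pi_t-\pi_t$, then uniform lower bounds on $\pi_t$, $1-\pi_t$, $\lambda_{1t}$ and $\lambda_{2t}-1$ so that the logarithms are Lipschitz, then averaging over $t$. You also supply the details the paper calls ``easily seen'', namely a.s.\ finiteness of $\sup_{\bpsi_s}\lambda_{s,1-j}(\bpsi_s)$ and of $\sum_t\rho^t(1+|Y_t|)$.

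There are two harmless slips. First, the fixed value of $B_0$ may differ from the true one, in which case $\widetilde\pi_1-\pi_1$ also contains $a$ times that discrepancy, so the bound is $\bar b^{\,t-1}$ rather than $\bar b^{\,t}$. Second, the lower bound on $\widetilde\lambda_{2t}-1$ (as opposed to $\lambda_{2t}-1$) cannot come from inverting the filter over the infinite past. It needs either $\omega_2>1$ on $\bPsi_2$, as the paper's $\underline{\kappa}$ implicitly assumes, or sufficiently large initial values $\widetilde\lambda_{2,1-j}$.
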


\begin{proof}
%Following the same device as in \citet[Remark 2.1]{af16} (see also \citealp{fz19}), 
It is easily seen that 
\eqref{4.2bis} and $\sup\limits_{\bphi \in \bPhi }b<1$ imply
\begin{eqnarray*}
\left\vert Y_{t}+1\right\vert ^{r}\sup\limits_{\bpsi _{s}\in \bPsi
_{s}}\left\vert \widetilde{\lambda }_{st}\left( \bpsi _{s}\right) -\lambda
_{st}\left( \bpsi _{s}\right) \right\vert \overset{a.s.}{\underset{%
t\rightarrow \infty }{\rightarrow }}0, 
& &\sup\limits_{\bphi \in \bPhi }\left\vert \widetilde{\pi }_{t}\left( \bphi
\right) -\pi _{t}\left( \bphi \right) \right\vert \overset{a.s.}{\underset{%
t\rightarrow \infty }{\rightarrow }}0
\end{eqnarray*}%
for $s=1,2$ and $r=0,1$. Now note that
\begin{equation}\label{blue}\underline{\kappa}:=\inf_{\btheta \in \bTheta}\min\{1, c, \omega_1, \omega_2-1\} >0.
\end{equation}
Hence using the inequality $\log (x)\leq x-1$,
we have 
\begin{align*}
&\;\sup\limits_{\btheta \in \bTheta }\left\vert \widetilde{L}_{n}\left( \btheta
\right) -L_{n}\left( \btheta \right) \right\vert  \\
=&\;\frac{1}{n}\sup\limits_{\btheta \in \bTheta }\sum_{t=1}^{n}\left\vert \log
(\frac{\widetilde{\pi }_{t}\left( \bphi \right) }{\pi _{t}\left( \bphi
\right) })+\left( \widetilde{\lambda }_{1t}\left( \bpsi _{1}\right) -\lambda
_{1t}\left( \bpsi _{1}\right) +Y_{t}\log (\frac{\widetilde{\lambda }%
_{1t}\left( \bpsi _{1}\right) }{\lambda _{1t}\left( \bpsi _{1}\right) }%
)\right) \mathbbm{1}_{Y_{t}\geq 0}\right.  \\
&\;\left. +\left( \log (\frac{1-\widetilde{\pi }_{t}\left( \bphi \right) }{%
1-\pi _{t}\left( \bphi \right) })+\lambda _{2t}\left( \bpsi _{2}\right) -%
\widetilde{\lambda }_{2t}\left( \bpsi _{2}\right) -\left( Y_{t}+1\right) \log
(\frac{\lambda _{2t}\left( \bpsi _{2}\right)-1 }{\widetilde{\lambda }%
_{2t}\left( \bpsi _{2}\right) -1})\right) \mathbbm{1}_{Y_{t}<0 }\right\vert 
\\
\leq &\;\frac{1}{n\underline{\kappa}}\sum_{t=1}^{n}\sup\limits_{\btheta \in \bTheta }\left\{\left\vert 
\widetilde{\pi }_{t}\left( \bphi \right) -\pi _{t}\left( \bphi \right)
\right\vert +\left(\left\vert \widetilde{\lambda }%
_{1t}\left( \bpsi _{1}\right) -\lambda _{1t}\left( \bpsi _{1}\right)
\right\vert+\left\vert \widetilde{\lambda }%
_{2t}\left( \bpsi _{2}\right) -\lambda _{2t}\left( \bpsi _{2}\right)
\right\vert\right)\left(1+|Y_t|\right) \right\}
\end{align*}%
so the result follows from C\'{e}saro's lemma.
\end{proof}

\begin{lem}\label{rousse}
Under \textbf{A1}-\textbf{A4}, $E\left(
\ell_{t}\left( \btheta _{0}\right) \right) <\infty $\textit{, }$E\left(
\ell_{t}\left( \btheta \right) \right) $\textit{\ is maximized at }$\btheta
=\btheta _{0}$\textit{, and }$E\left( \ell_{t}\left( \btheta _{0}\right) \right)
=E\left( \ell_{t}\left( \btheta \right) \right) \Rightarrow \btheta =\btheta _{0}$.
\end{lem}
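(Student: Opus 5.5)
The plan is to exploit the additive separation of $\ell_t(\btheta)$ into three pieces, $\ell_t(\btheta)=\ell_t^{B}(\bphi)+\ell_t^{1}(\bpsi_1)+\ell_t^{2}(\bpsi_2)$. The Bernoulli piece is $\ell_t^{B}(\bphi)=\mathbbm{1}_{Y_t\geq0}\log\pi_t(\bphi)+\mathbbm{1}_{Y_t<0}\log(1-\pi_t(\bphi))$. The second piece is $\ell_t^{1}(\bpsi_1)=(-\lambda_{1t}(\bpsi_1)+Y_t\log\lambda_{1t}(\bpsi_1))\mathbbm{1}_{Y_t\geq0}$. The third is $\ell_t^{2}(\bpsi_2)=(-\lambda_{2t}(\bpsi_2)-(Y_t+1)\log(\lambda_{2t}(\bpsi_2)-1))\mathbbm{1}_{Y_t<0}$. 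I would handle each piece separately. In every case I would condition on $\mathcal F_{t-1}$, using that $\mathbbm{1}_{Y_t\geq0}=B_t$ and that $B_t$ and $X_{st}$ are conditionally independent.

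\textbf{Finiteness.} Since $c\le\pi_t\le a+b+c<1$, $\ell_t^B(\bphi_0)$ is bounded. For the other pieces I would use $\lambda_{1t}\ge\omega_1>0$ and $\lambda_{2t}-1\ge\underline{\kappa}>0$, from \eqref{blue}, together with $|\log x|\le |x|+|\log\underline\kappa|+\dots$. This gives $|\ell^s_t(\bpsi_{0s})|\le K(1+|Y_t|)(1+\lambda_{st})$, up to a log term. Finiteness then follows from \eqref{moment1} and from $E(X_{st}\lambda_{st})\le E\lambda_{st}^2$-type bounds. More carefully, $E(X_{1t}\log\lambda_{1t}\mid\mathcal F_{t-1})=\lambda_{1t}\log\lambda_{1t}$ is bounded by $\lambda_{1t}^{1+\delta}$. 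This requires a moment slightly above one for $\lambda_{st}$, which I would extract from \textbf{A2} (via the $L^\tau$ version of the iteration in the proof of Proposition~\ref{Pro3.1}). The expectation $E\ell_t(\btheta)$ is well defined in $[-\infty,\infty)$ for every $\btheta$, since the positive parts are integrable by the same bounds.

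\textbf{Maximization.} Conditioning on $\mathcal F_{t-1}$ gives
$E(\ell^B_t(\bphi)\mid\mathcal F_{t-1})=\pi_t\log\pi_t(\bphi)+(1-\pi_t)\log(1-\pi_t(\bphi))$, which is maximized uniquely at $\pi_t(\bphi)=\pi_t$ (Gibbs' inequality). Using $E(X_{1t}\mid\mathcal F_{t-1})=\lambda_{1t}$, we get $E(\ell^1_t(\bpsi_1)\mid\mathcal F_{t-1})=\pi_t\{-x+\lambda_{1t}\log x\}$ with $x=\lambda_{1t}(\bpsi_1)$. The function $x\mapsto -x+\lambda\log x$ is strictly concave with unique maximum at $x=\lambda$. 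Similarly, with $Y_t+1=-(X_{2t}-1)$ on $\{Y_t<0\}$, one has $E(\ell^2_t\mid\mathcal F_{t-1})=(1-\pi_t)\{-y+(\lambda_{2t}-1)\log(y-1)\}$ up to sign conventions, with $y=\lambda_{2t}(\bpsi_2)$. After writing $u=y-1$, this is uniquely maximized at $u=\lambda_{2t}-1$. Taking expectations shows that $E\ell_t(\btheta)\le E\ell_t(\btheta_0)$. Equality forces, almost surely, $\pi_t(\bphi)=\pi_t(\bphi_0)$ and $\lambda_{st}(\bpsi_s)=\lambda_{st}(\bpsi_{0s})$, because $\pi_t$ and $1-\pi_t$ are bounded away from $0$.

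\textbf{Identification (main obstacle).} This is the hardest step. From $\lambda_{st}(\bpsi_s)=\lambda_{st}(\bpsi_{0s})$ a.s., I would follow the standard GARCH/INGARCH argument as in \cite{af16} and \cite{fz19}. Writing $\beta_s(B)\lambda_{st}=\omega_s+\alpha_s(B)|Y_t|$, one obtains the identity $\{\alpha_s(B)/\beta_s(B)-\alpha_{0s}(B)/\beta_{0s}(B)\}|Y_t|=$ constant a.s. If the rational function were nonzero, some nontrivial linear combination of $|Y_{t-1}|,|Y_{t-2}|,\dots$ would be $\mathcal F_{t-2}$-measurable. This would make $|Y_{t-1}|$ degenerate conditionally on $\mathcal F_{t-2}$, which is impossible. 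To rule it out I would use that $B_{t-1}$ is nondegenerate given $\mathcal F_{t-2}$ and that $X_{1,t-1}\ge0$ and $X_{2,t-1}\ge1$ have different supports, or alternatively that $F^s_\lambda$ is nondegenerate. Then \textbf{A3(i)} (coprimeness, $\alpha_{0s}(1)\neq0$, $\alpha_{0sq}+\beta_{0sp}\ne0$) gives $\bpsi_s=\bpsi_{0s}$. For $\bphi$, $\pi_t(\bphi)=\pi_t(\bphi_0)$ gives $(1-bB)^{-1}(c+aB_{t-1})=(1-b_0B)^{-1}(c_0+a_0B_{t-1})$. Because $B_{t-1}$ is nondegenerate given $\mathcal F^B_{t-2}$, the coefficients of $B_{t-1}$ must match, so $a=a_0$. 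Then \textbf{A3(ii)} ($a_0\neq0$) allows matching the coefficient of $B_{t-2}$, which gives $b=b_0$ and then $c=c_0$. The delicate point I expect to spend effort on is justifying the nondegeneracy of $|Y_{t-1}|$ given $\mathcal F_{t-2}$ under only the assumption $F^s\in\mathbb F$.
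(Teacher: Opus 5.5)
Your proposal follows the paper's proof. It uses the same three-way split of $\ell_t$ and conditions on $\mathcal F_{t-1}$ with $\log x\le x-1$ (your Gibbs/strict-concavity step is that inequality). It identifies $\bpsi_s$ through \textbf{A3(i)} by the standard argument of \cite{fz19}, and identifies $\bphi$ by first matching the coefficient of $B_{t-1}$, which is not $\mathcal F_{t-2}$-measurable, and then using $a_0\neq0$.

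The finiteness step needs one repair. Drop the $E\lambda_{st}^2$-type bound, since second moments are not available. Also drop the proposed $L^\tau$ version of the iteration of Proposition~\ref{Pro3.1}, which would need more than $\rho(\bA)<1$ and is unnecessary. Either of two routes suffices:
\begin{enumerate}
\item \textbf{A2} and conditional Jensen give $E\lambda_{st}^\tau\le EX_{st}^\tau<\infty$ at once.
\item As in the paper, H\"older gives $E|X_{1t}\log\lambda_{1t}(\bpsi_1)|\le\|X_{1t}\|_\tau\|\log\lambda_{1t}(\bpsi_1)\|_{\tau/(\tau-1)}<\infty$ for every $\bpsi_1$. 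The logarithm has moments of all orders by \eqref{moment1}, \eqref{4.2} and \eqref{blue}.
\end{enumerate}

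The paper does not address the conditional nondegeneracy of $|Y_{t-1}|$ that you flag; it just cites \cite{fz19}. Your support argument settles it whenever $F^1_\lambda$ charges $0$. In that case $\{|Y_{t-1}|=0\}$ and $\{|Y_{t-1}|\ge1\}$ both have positive probability given $\mathcal F_{t-2}$. Some such condition is genuinely needed: in the degenerate example given after \textbf{A8}, $|Y_t|$ is constant and $\bpsi_s$ is not identified.
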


\begin{proof}
In view of \eqref{moment1}, \eqref{4.2} and \eqref{blue},  $\left\vert \log \left( \lambda
_{1t}\left( \bpsi _{1}\right) \right) \right\vert $ and $\left\vert \log \left( \lambda
_{2t}\left( \bpsi _{2}\right)-1 \right) \right\vert $  admit moments of any
order, for all $\bpsi \in \bPsi$. By {\bf A2} and H\"{o}lder's inequality, we have 
$$E\left|Y_t\log \lambda
_{1t}\left( \bpsi _{1}\right)\mathbbm{1}_{
Y_{t}\geq 0}\right|\leq \left\|X_{1t}\right\|_{\tau}\left\|\log \lambda
_{1t}\left( \bpsi _{1}\right)\right\|_{\tau/(\tau-1)}<\infty$$ and
$$E\left|\left(Y_t+1\right)\log \left(\lambda
_{2t}\left( \bpsi _{2}\right)-1\right)\mathbbm{1}_{
Y_{t}< 0}\right|\leq \left\|X_{2t}\right\|_{\tau}\left\|\log \left(\lambda
_{2t}\left( \bpsi _{2}\right)-1\right)\right\|_{\tau/(\tau-1)}<\infty.$$
Moreover,  $\lambda
_{1t}\left( \bpsi _{1}\right)$, $\lambda
_{2t}\left( \bpsi _{2}\right)$, $\log \left( \pi _{t}\left(
\bphi\right) \right) $ and $\log \left( 1-\pi _{t}\left( \bphi
\right) \right) $ admit a finite moment (the latter two actually admit moments of any order). It follows that 
\begin{eqnarray}
\left\vert E\left( \ell_{t}\left( \btheta\right) \right) \right\vert  &\leq
&E\left( \left\vert \ell_{t}\left( \btheta\right) \right\vert \right)  
 <\infty.  \label{4.12}
\end{eqnarray}%
Now using the inequality $\log \left( x\right) \leq x-1$ and the facts
that 
\begin{equation}
\begin{array}{l}
E\left( \mathbbm{1}_{Y_{t}\geq 0 }\mid \mathcal{F}_{t-1}\right) =\pi
_{t}\left( \bphi _{0}\right) ,\text{ \ \ \ \ \ }E\left( \mathbbm{1}_{Y_{t}<0 }|\mathcal{F}_{t-1}\right) =1-\pi _{t}\left( \bphi _{0}\right)  \\ 
E\left( Y_{t}\mathbbm{1}_{Y_{t}\geq 0}\mid \mathcal{F}_{t-1}\right)
=\pi _{t}\left( \bphi _{0}\right) \lambda _{1t}\left( \bpsi _{01}\right)  \\ 
E\left( \left( Y_{t}+1\right) \mathbbm{1}_{Y_{t}<0}\mid \mathcal{F}%
_{t-1}\right) =-\left( 1-\pi _{t}\left( \bphi _{0}\right) \right) \left(
\lambda _{2t}\left( \bpsi _{02}\right) -1\right) 
\end{array}
\label{4.13}
\end{equation}%
we obtain%
\begin{eqnarray}
&&\left. E\left( \ell_{t}\left( \btheta \right) -\ell_{t}\left( \btheta _{0}\right)
\right) \right.   \notag \\
&&\left. \leq E\left( \left( \frac{\pi _{t}\left( \bphi \right) -\pi
_{t}\left( \bphi _{0}\right) }{\pi _{t}\left( \bphi _{0}\right) }-\lambda
_{1t}\left( \bpsi _{1}\right) +\lambda _{1t}\left( \bpsi _{01}\right) +Y_{t}%
\frac{\lambda _{1t}\left( \bpsi _{1}\right) -\lambda _{1t}\left( \bpsi
_{01}\right) }{\lambda _{1t}\left( \bpsi _{01}\right) }\right) \mathbbm{1}_{
Y_{t}\geq 0}\right) \right.   \notag \\
&&\left. 
+ E\left( \left( \frac{\pi _{t}\left( \bphi _{0}\right) -\pi
_{t}\left( \bphi \right) }{1-\pi _{t}\left( \bphi _{0}\right) }-\lambda
_{2t}\left( \bpsi _{2}\right) +\lambda _{2t}\left( \bpsi _{02}\right) -\left(
Y_{t}+1\right) \frac{\lambda _{2t}\left( \bpsi _{2}\right) -\lambda
_{2t}\left( \bpsi _{02}\right) }{\lambda _{2t}\left( \bpsi _{02}\right) -1}%
\right) \mathbbm{1}_{Y_{t}<0}\right) \right.   \notag \\
&&\left. =E\left( \pi _{t}\left( \bphi _{0}\right) \left( \lambda _{1t}\left(
\bpsi _{01}\right) -\lambda _{1t}\left( \bpsi _{1}\right) \right) \right)
+E\left( \pi _{t}\left( \bphi _{0}\right) \left( \lambda _{1t}\left( \bpsi
_{1}\right) -\lambda _{1t}\left( \bpsi _{01}\right) \right) \right) \right.  
\notag \\
&&\left. +E\left( \left( 1-\pi _{t}\left( \bphi _{0}\right) \right) \left(
\lambda _{2t}\left( \bpsi _{02}\right) -\lambda _{2t}\left( \bpsi _{2}\right)
\right) \right) +E\left( \left( 1-\pi _{t}\left( \bphi _{0}\right) \right)
\left( \lambda _{2t}\left( \bpsi _{2}\right) -\lambda _{2t}\left( \bpsi
_{02}\right) \right) \right) \right.   \notag \\
&&\left. =0\text{.}\right.   \label{4.14}
\end{eqnarray}%
Moreover, the inequality in \eqref{4.14} reduces to an equality iff%
\begin{eqnarray}
&&E\left( \left( \log \frac{\pi _{t}\left( \bphi \right) }{\pi _{t}\left(
\bphi _{0}\right) }-\lambda _{1t}\left( \bpsi _{1}\right) +\lambda _{1t}\left(
\bpsi _{01}\right) +Y_{t}\log \left( \frac{\lambda _{1t}\left( \bpsi
_{1}\right) }{\lambda _{1t}\left( \bpsi _{01}\right) }\right) \right) \mathbbm{1}_{Y_{t}\geq 0}\right) +  \notag \\
&&\left. E\left( \left( \log \frac{1-\pi _{t}\left( \bphi \right) }{1-\pi
_{t}\left( \bphi _{0}\right) }-\lambda _{2t}\left( \bpsi _{2}\right) +\lambda
_{2t}\left( \bpsi _{02}\right) -\left( Y_{t}+1\right) \log \left( \frac{%
\lambda _{2t}\left( \bpsi _{2}\right) -1}{\lambda _{2t}\left( \bpsi
_{02}\right) -1}\right) \right) \mathbbm{1}_{Y_{t}<0}\right) =0\text{,}%
\right.   \notag
\end{eqnarray}%
which holds iff almost surely $\pi _{t}\left( \bphi \right) =\pi _{t}\left( \bphi
_{0}\right) $, $\lambda _{1t}\left( \bpsi _{1}\right) =\lambda _{1t}\left(
\bpsi _{01}\right) $, and $\lambda _{2t}\left( \bpsi _{2}\right) =\lambda
_{2t}\left( \bpsi _{02}\right)$. By \textbf{A3(i)} and standard arguments (see {\em e.g.} (7.32) in \cite{fz19}) the last two equalities entail $\bpsi _{1}=\bpsi _{01}$, and $%
\bpsi _{2}=\bpsi _{02}$.
Now $\pi _{t}\left( \bphi \right) =\pi _{t}\left( \bphi
_{0}\right) $ with probability 1 (and for all $t$, by stationarity) entails
$$c-c_0+(a-a_0)B_{t}+(b-b_0)\pi_{t}(\bphi_0)=0.$$
Since, by \eqref{fille} and  \textbf{A3(ii)},  $B_t$ and $\pi_t(\bphi_0)$ are not degenerated, the previous equality entails $c=c_0$, $a=a_0$ and $b=b_0$.
Therefore  $\pi _{t}\left( \bphi \right) =\pi _{t}\left( \bphi
_{0}\right) $ a.s. iff $\bphi =\bphi _{0}$, which concludes. 
\end{proof}

\begin{lem}\label{lem4.3} Assume \textbf{A1}-\textbf{A4}.
For any $\btheta \neq \btheta _{0}$, \textit{there
is a neighborhood }$\mathcal{V}\left( \btheta \right) $ of $\btheta$\textit{\ such that}%
\begin{equation*}
\underset{n\rightarrow \infty }{\lim \sup }\sup\limits_{\overline{\btheta }%
\in \mathcal{V}\left( \btheta \right) }\widetilde{L}_{n}\left( \overline{%
\btheta }\right) <\text{ }\underset{n\rightarrow \infty }{\lim \sup }\;
\widetilde{L}_{n}\left( \btheta _{0}\right) \text{,\ }a.s.
\end{equation*}
\end{lem}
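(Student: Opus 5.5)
By Lemma~\ref{lem4.1}, it suffices to prove the statement with $\widetilde{L}_{n}$ replaced by $L_{n}$ on both sides. The left-hand side changes by at most $\sup_{\btheta\in\bTheta}|\widetilde L_n-L_n|\to0$ a.s., and the right-hand side likewise. The plan is the standard Wald-type argument, combining the ergodic theorem with the identifiability result of Lemma~\ref{rousse}.

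\textbf{Step 1: local suprema.} For $\btheta\neq\btheta_0$ and $k\ge1$, let $\mathcal V_k(\btheta)$ be the open ball of radius $1/k$ around $\btheta$ intersected with $\bTheta$. Define
\[
S_{t,k}(\btheta)=\sup_{\overline\btheta\in\mathcal V_k(\btheta)}\ell_t(\overline\btheta).
\]
This is a measurable function of $\{Y_u,u\le t\}$ (by continuity, the supremum can be taken over a countable dense set), hence stationary and ergodic by Proposition~\ref{Pro3.1}.

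\textbf{Step 2: integrability.} I need $E S_{t,k}^+(\btheta)<\infty$. I would bound $\ell_t(\overline\btheta)$ from above uniformly over $\bTheta$:
\begin{itemize}
\item $\log\pi_t\le0$ and $\log(1-\pi_t)\le 0$;
\item $-\lambda_{1t}\le0$ and $-\lambda_{2t}\le0$;
\item $Y_t\log\lambda_{1t}\mathbbm 1_{Y_t\ge0}\le |Y_t|\sup_{\bPsi_1}\lambda_{1t}$;
\item $-(Y_t+1)\log(\lambda_{2t}-1)\mathbbm 1_{Y_t<0}\le |Y_t|\sup_{\bPsi_2}|\log(\lambda_{2t}-1)|$, and $\lambda_{2t}-1\ge\underline\kappa$ by \eqref{blue}.
\end{itemize}
Under \eqref{4.2bis}, $\sup_{\bPsi_s}\lambda_{st}(\bpsi_s)\le K\sum_{i\ge0}\varrho^i(1+|Y_{t-i}|)$ for some $\varrho<1$, which has moments of every order of $\log$ and is in $L^1$. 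Hölder's inequality with $\tau$ from \textbf{A2}, exactly as in Lemma~\ref{rousse}, then bounds $E|Y_t|\,\sup_{\bTheta}|\log\lambda_{st}|$. This gives $E\sup_{\bTheta}\ell_t^+<\infty$.

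\textbf{Step 3: the limit.} The ergodic theorem, valid for stationary ergodic sequences with well-defined expectation possibly $-\infty$, gives
\[
\limsup_n \sup_{\mathcal V_k(\btheta)} L_n\le \lim_n \frac1n\sum_t S_{t,k}(\btheta)=E S_{t,k}(\btheta)\quad\text{a.s.}
\]
As $k\to\infty$, $S_{t,k}(\btheta)\downarrow\ell_t(\btheta)$ by continuity of $\btheta\mapsto\ell_t(\btheta)$. Since the $S_{t,k}^+$ are dominated by the integrable $\sup_{\bTheta}\ell_t^+$, Beppo–Levi (monotone convergence for decreasing sequences) gives $ES_{t,k}(\btheta)\downarrow E\ell_t(\btheta)$. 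By Lemma~\ref{rousse}, $E\ell_t(\btheta)<E\ell_t(\btheta_0)$, so some $k$ yields $ES_{t,k}(\btheta)<E\ell_t(\btheta_0)$. Finally, $L_n(\btheta_0)\to E\ell_t(\btheta_0)$ a.s. by the ergodic theorem, because $E|\ell_t(\btheta_0)|<\infty$ by \eqref{4.12}. Taking $\mathcal V(\btheta)=\mathcal V_k(\btheta)$ concludes the proof.

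The main obstacle is Step 2, the uniform integrable upper bound. The term $|Y_t|\sup|\log\lambda_{st}|$ requires the geometric representation of $\sup_{\bPsi_s}\lambda_{st}$ together with the strictly-greater-than-one moment of \textbf{A2}, via Hölder. The lower bound of $\lambda_{2t}-1$ from \eqref{blue} controls the $\log(\lambda_{2t}-1)$ term near zero. The rest is routine.
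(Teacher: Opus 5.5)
Your proof is correct and follows the paper's argument: you reduce to $L_n$ via Lemma~\ref{lem4.1}, bound the local supremum over shrinking balls by $E\sup_{\mathcal V_k(\btheta)}\ell_t$ through the ergodic theorem, let $k\to\infty$ by monotone convergence, and conclude with Lemma~\ref{rousse}. The one difference is that you only need the one-sided bound $E\sup\ell_t^+<\infty$, whereas the paper asserts $E\sup|\ell_t|<\infty$ on the ball by the arguments of \eqref{4.12}. One small slip: your third bullet should bound $Y_t\log\lambda_{1t}\mathbbm{1}_{Y_t\ge0}$ by $|Y_t|\sup_{\bPsi_1}|\log\lambda_{1t}|$ rather than $|Y_t|\sup_{\bPsi_1}\lambda_{1t}$, since the latter need not be integrable with only $\tau>1$ moments, but your H\"older step already uses the correct logarithmic bound.
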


\begin{proof}
Let $V_{k}(\overline{\btheta })$ ($k\in 
\mathbb{N}
^{\ast }$, $\overline{\btheta }\in \bTheta $) be the open ball of center $%
\overline{\btheta }$ and radius $\frac{1}{k}$. Since $\sup_{\btheta \in V_{k}(%
\overline{\btheta })\cap \bTheta }\ell_{t}\left( \btheta \right) $ is a measurable
function of the stationary and ergodic process $\left\{ Y_{t},t\in \mathbb{Z}\right\} $, the process $\left\{
\sup_{\btheta \in V_{k}(\overline{\btheta })\cap \bTheta }\ell_{t}\left( \btheta
\right) ,t\in \mathbb{Z}\right\} $ is also strictly stationary and ergodic
and satisfies $E\sup_{\btheta \in V_{k}(\overline{\btheta })\cap \bTheta
}\left|\ell_{t}\left( \btheta \right) \right|<\infty$ by the arguments used to show \eqref{4.12}. Hence, by \eqref{4.11}
\begin{equation*}
\underset{n\rightarrow \infty }{\lim \sup }\sup_{\btheta \in V_{k}(\overline{%
\btheta })\cap \bTheta }\widetilde{L}_{n}\left( \btheta \right) =\text{ }%
\underset{n\rightarrow \infty }{\lim \sup }\sup_{\btheta \in V_{k}(\overline{%
\btheta })\cap \bTheta }L_{n}\left( \btheta \right) \leq E\left( \sup_{\btheta
\in V_{k}(\overline{\btheta })\cap \bTheta }\ell_{t}\left( \btheta \right) \right) 
\text{.}
\end{equation*}

By the monotone convergence theorem, $E\left( \sup\limits_{\btheta \in V_{k}(%
\overline{\btheta })\cap \bTheta }\ell_{t}\left( \btheta \right) \right) $
decreases to $E\left( \ell_{t}\left( \overline{\btheta }\right) \right) $ as $%
k\rightarrow \infty $ and the results follows from Lemma \ref{rousse}.
\end{proof}

\begin{lem}\label{lem4.4}
\textit{Under} \textit{\textbf{A1} and \textbf{A4}}%
\begin{equation*}
n^{\frac{1}{2}}\sup_{\btheta \in \bTheta }\left\Vert \frac{\partial 
\widetilde{L}_{n}\left( \btheta \right) }{\partial \btheta }-\frac{\partial
L_{n}\left( \btheta \right) }{\partial \btheta }\right\Vert \underset{%
n\rightarrow \infty }{\overset{a.s.}{\rightarrow }}0.
\end{equation*}
\end{lem}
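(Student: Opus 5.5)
The plan is to mimic the proof of Lemma~\ref{lem4.1}, now for first derivatives and with the stronger normalization $n^{1/2}$. The key input is that initial-value effects decay geometrically. Under \eqref{4.2bis} and compactness of $\bPsi_s$, there exist a constant $\varrho\in(0,1)$ and a random variable $K$ with finite (small) moment such that
\[
\sup_{\bpsi_s\in\bPsi_s}\bigl|\widetilde\lambda_{st}(\bpsi_s)-\lambda_{st}(\bpsi_s)\bigr|
+\sup_{\bpsi_s\in\bPsi_s}\Bigl\|\frac{\partial\widetilde\lambda_{st}(\bpsi_s)}{\partial\bpsi_s}-\frac{\partial\lambda_{st}(\bpsi_s)}{\partial\bpsi_s}\Bigr\|\le K\varrho^t .
\]
The analogous bound holds for $\widetilde\pi_t-\pi_t$ and its $\bphi$-derivative, since $\sup_{\bPhi}b<1$. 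This comes from writing the recursions as vector AR(1) equations with companion matrix $\bB_s$ (as in \citealp{fz19}, Chapter 7). The derivative recursions, $\partial\lambda_{st}/\partial\beta_{sj}=\lambda_{s,t-j}+\sum_i\beta_{si}\,\partial\lambda_{s,t-i}/\partial\beta_{sj}$, have the same companion structure, so the difference of derivatives is bounded by $t\varrho^t$ times initial-value terms. That factor can be absorbed by enlarging $\varrho$. The derivatives $\partial\lambda_{st}/\partial\bpsi_s$ themselves are bounded by $\sum_k\varrho^k(1+|Y_{t-k}|)$ uniformly on $\bPsi_s$, and $\partial\pi_t/\partial\bphi$ is bounded by a constant since $B_t\in\{0,1\}$.

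Next, each block of the gradient difference is split by adding and subtracting. For the $\bpsi_1$ block, using \eqref{blue} (so that $\lambda_{1t},\widetilde\lambda_{1t}\ge\underline\kappa$), this gives
\[
\Bigl\|\Bigl(\tfrac{Y_t}{\widetilde\lambda_{1t}}-1\Bigr)\tfrac{\partial\widetilde\lambda_{1t}}{\partial\bpsi_1}-\Bigl(\tfrac{Y_t}{\lambda_{1t}}-1\Bigr)\tfrac{\partial\lambda_{1t}}{\partial\bpsi_1}\Bigr\|
\le \Bigl(1+\tfrac{|Y_t|}{\underline\kappa}\Bigr)\Bigl\|\tfrac{\partial\widetilde\lambda_{1t}}{\partial\bpsi_1}-\tfrac{\partial\lambda_{1t}}{\partial\bpsi_1}\Bigr\|+\tfrac{|Y_t|}{\underline\kappa^2}\bigl|\widetilde\lambda_{1t}-\lambda_{1t}\bigr|\Bigl\|\tfrac{\partial\lambda_{1t}}{\partial\bpsi_1}\Bigr\| .
\]
The $\bpsi_2$ block is handled identically with $\lambda_{2t}-1\ge\underline\kappa$. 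The $\bphi$ block is handled the same way with $\pi_t,1-\pi_t$ bounded away from $0$ uniformly on $\bPhi$. The supremum over $\btheta$ of each summand is therefore at most $K\varrho^t(1+|Y_t|)\bigl(1+\sum_k\varrho^k(1+|Y_{t-k}|)\bigr)$.

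It remains to show that $n^{-1/2}\sum_{t=1}^n\varrho^t(1+|Y_t|)\bigl(1+\sum_k\varrho^k|Y_{t-k}|\bigr)\to0$ a.s. It suffices that $\sum_t\varrho^t|Y_t|\,W_t<\infty$ a.s., where $W_t=\sum_k\varrho^k|Y_{t-k}|$. Since $E|Y_t|<\infty$ by \eqref{moment1}, we have $E(|Y_t|W_t)^{s}<\infty$ for some small $s\in(0,1/2]$, because $E(XY)^{1/2}\le (EX\,EY)^{1/2}$. Then $E\bigl(\sum_t\varrho^t|Y_t|W_t\bigr)^s\le\sum_t\varrho^{ts}E(|Y_t|W_t)^s<\infty$ by the $c_r$-inequality and stationarity. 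So the series is a.s. finite and the $n^{-1/2}$ factor kills it. The random constant $K$ depends only on the initial values and on finitely many $|Y_t|$, $t\le 0$, hence is a.s. finite, which is all that is needed.

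The main obstacle is the first step: obtaining the uniform geometric bound for the derivative differences with only a first moment of $Y_t$. The plan is to handle it with the moment-of-order-$s$ trick rather than Borel–Cantelli, so that no assumption beyond \textbf{A1} and \textbf{A4} is needed.
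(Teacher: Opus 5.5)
Your proposal is correct and is essentially the standard argument the paper invokes, since its proof simply refers to \cite{af16}: exponential forgetting of the initial values for $\lambda_{st}$, $\pi_t$ and their derivatives, a block-wise add-and-subtract bound using $\underline{\kappa}$, and almost-sure summability of $\sum_t\varrho^t(1+|Y_t|)(1+W_t)$ via a fractional moment. One minor correction: $K$ depends on the infinite past through $\sup_{\bpsi_s\in\bPsi_s}\lambda_{s,0}(\bpsi_s),\dots,\sup_{\bpsi_s\in\bPsi_s}\lambda_{s,1-p}(\bpsi_s)$ and their derivatives, not only on finitely many $|Y_t|$; it is nevertheless $\mathcal{F}_0$-measurable and a.s.\ finite, because these quantities are bounded by $C(1+\sum_{k\geq 0}\varrho^k|Y_{-k}|)$, which is integrable.
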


\begin{proof}
The result can be proved in the same way as in \cite{af16}.
\end{proof}

\begin{lem}
Under \textbf{A1}-\textbf{A6} the matrices $\bPi$, $\bJ_1$, $\bJ_2$,  $\bI_{1}$, and $\bI_{2}$ exist and are invertible.
\end{lem}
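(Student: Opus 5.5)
Existence and invertibility are proved separately for each matrix. Existence comes from moment bounds; invertibility comes from the identifiability conditions \textbf{A3(i)}--\textbf{A3(ii)}, through the usual argument that a zero quadratic form forces a linear relation among the derivatives of $\pi_t$ or $\lambda_{st}$.

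\emph{Existence.} Since $\pi_t(\bphi)=c+aB_{t-1}+b\pi_{t-1}(\bphi)$ with $b<1$, the derivatives $\partial\pi_t/\partial\bphi$ are geometric sums of bounded variables, hence bounded uniformly. By \eqref{fille} and $c>0$, $\pi_t(1-\pi_t)$ is bounded away from $0$, so $\bPi$ exists. For $\bJ_s,\bI_s$, I use the standard INGARCH bound
\[
\Bigl\|\frac{1}{\lambda_{st}}\frac{\partial\lambda_{st}}{\partial\bpsi_s}\Bigr\|\le K\sum_{k\ge0}\rho^k\,|Y_{t-k}|^{\gamma}\quad\text{for any }\gamma\in(0,1),
\]
which follows from $\lambda_{st}\ge\omega_s+\alpha_{si}|Y_{t-i}|$ and $x/(1+x)\le x^\gamma$ (see \cite{af16} or \cite{fz19}). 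With \eqref{blue} this gives moments of all orders for $\lambda_{st}^{-1}\partial\lambda_{st}/\partial\bpsi_s$, and likewise for $(\lambda_{2t}-1)^{-1}\partial\lambda_{2t}/\partial\bpsi_2$, since $\lambda_{2t}-1\ge\underline{\kappa}$ and $\lambda_{2t}/(\lambda_{2t}-1)$ is bounded. Conditioning on $\mathcal{F}_{t-1}$ gives
\[
E\bigl(Y_t\mathbbm{1}_{Y_t\ge0}\mid\mathcal{F}_{t-1}\bigr)=\pi_t\lambda_{1t},\qquad
E\bigl((Y_t-\lambda_{1t})^2\mathbbm{1}_{Y_t\ge0}\mid\mathcal{F}_{t-1}\bigr)=\pi_t\sigma_{1t}^2 .
\]
The integrand of $\bJ_1$ then reduces to $\pi_t\lambda_{1t}^{-1}\,\partial\lambda_{1t}\,\partial\lambda_{1t}^\top$, which is integrable. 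The integrand of $\bI_1$ reduces to $\pi_t(\sigma_{1t}^2/\lambda_{1t})\lambda_{1t}^{-2}\,\partial\lambda_{1t}\,\partial\lambda_{1t}^\top\cdot\lambda_{1t}$, and Hölder's inequality with \textbf{A6} (exponent $1+\varepsilon$), together with the all-order moments of the normalized derivatives, makes it finite. The $s=2$ case is identical, using $Y_t+\lambda_{2t}=-(X_{2t}-\lambda_{2t})$ on $\{Y_t<0\}$.

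\emph{Invertibility.} All five matrices are positive semidefinite, so it suffices to show $\bx^\top M\bx=0$ implies $\bx=0$.

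For $\bPi$, a zero quadratic form gives $\bx^\top\partial\pi_t/\partial\bphi=0$ a.s. Differentiating the recursion gives $\partial\pi_t/\partial\bphi=(1,B_{t-1},\pi_{t-1})^\top+b\,\partial\pi_{t-1}/\partial\bphi$. Stationarity then yields $x_1+x_2B_{t-1}+x_3\pi_{t-1}=0$ a.s. Since $B_{t-1}$ is not $\mathcal{F}^B_{t-2}$-measurable (because $0<\pi_{t-1}<1$) while $\pi_{t-1}$ is, we get $x_2=0$. Then $x_1+x_3\pi_{t-1}=0$, and since $\pi_{t-1}$ is nondegenerate because $a_0\neq0$ by \textbf{A3(ii)}, we get $x_1=x_3=0$.

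For $\bJ_1$, the weights $\pi_t>0$ and $\lambda_{1t}^{-1}>0$ give $\bx^\top\partial\lambda_{1t}/\partial\bpsi_1=0$ a.s. The classical argument (\cite{fz19}, Ch.~7) then gives a linear relation $\sum_i x_{\alpha i}|Y_{t-i}|+\dots=0$, which under \textbf{A3(i)} forces $\bx=0$ provided $|Y_{t-1}|$ is not a.s.\ a measurable function of $\mathcal{F}_{t-2}$. That holds because $X_{1,t-1}$ or $X_{2,t-1}$ is nondegenerate given the past. $\bJ_2$ is handled the same way.

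For $\bI_s$, the weight is $\pi_t\sigma_{1t}^2/\lambda_{1t}^2$ (resp.\ $(1-\pi_t)\sigma_{2t}^2$), so the argument goes through on the event $\{\sigma_{st}^2>0\}$. \textbf{This is the main obstacle}: I must show that $\{\sigma_{st}^2>0\}$ has enough probability for the derivative relation to hold a.s. Conditional degeneracy of $X_{st}$ is incompatible with the mean $\lambda_{st}$ being non-integer, and $\lambda_{st}$ takes continuum-like values whenever $\beta_s>0$; the remaining cases need \textbf{A6}/\textbf{A8}-type nondegeneracy. I would first try to show that $\sigma_{st}^2=0$ forces $\lambda_{st}\in\mathbb{N}$, and that $P(\lambda_{st}\in\mathbb{N})<1$ together with stationarity suffices to propagate the linear relation.
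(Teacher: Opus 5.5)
Your handling of $\bPi$ and $\bJ_s$ is the paper's argument: the recursion for $\partial\pi_t/\partial\bphi$, non-measurability of $B_{t-1}$, and non-degeneracy of $\pi_t$ from \textbf{A3(ii)}, followed by the reduction of \cite{fz19} under \textbf{A3(i)}. Your conditioning on $\mathcal{F}_{t-1}$ replaces the weight $Y_t\mathbbm{1}_{\{Y_t\geq 0\}}$, which vanishes on $\{Y_t=0\}$, by $\pi_t/\lambda_{1t}>0$, and this is a genuine gain in rigor. The gap is the invertibility of $\bI_s$. You leave it open, and the route you sketch would not close it. Your heuristic that $\lambda_{st}$ takes continuum-like values when $\beta_s>0$ fails in the degenerate example of the paper's remark on \textbf{A8}. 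There $X_{1t}=X_{2t}=c$, so $\lambda_{st}\equiv c$, with $\beta_s>0$ allowed. In that example $\sigma^2_{st}\equiv 0$, hence $\bI_s=0$; and $\partial\lambda_{st}/\partial\bpsi_s$ is deterministic, hence $\bJ_s$ has rank one. Moreover, even when $0<P(\sigma^2_{st}>0)<1$, a vanishing quadratic form only gives $\boldsymbol{x}^\top\partial\lambda_{st}/\partial\bpsi_s=0$ on $\{\sigma^2_{st}>0\}$. Stationarity does not upgrade this to an almost-sure identity. The recursion $\partial\lambda_{st}/\partial\bpsi_s=Z_{st}(\bpsi_{0s})+\sum_j\beta_{0sj}\,\partial\lambda_{s,t-j}/\partial\bpsi_s$ needs the identity at dates $t,\dots,t-p$ on a common event of full probability.

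The missing ingredient is conditional non-degeneracy, $\sigma^2_{st}>0$ a.s. It has to come from the hypotheses on $F^s_\lambda$, not from the values taken by $\lambda_{st}$. Read the support condition in \textbf{A1} (inherited from Proposition~\ref{Pro3.1}) literally: $F^1_\lambda$ and $F^2_\lambda$ charge every point of $\mathbb{N}_0$ and $\mathbb{N}$. Then every conditional law has at least two atoms, so the weights $\pi_t\sigma^2_{1t}/\lambda_{1t}^2$ and $(1-\pi_t)\sigma^2_{2t}/(\lambda_{2t}-1)^2$ are a.s.\ positive, and $\bI_s$ is handled verbatim like $\bJ_s$. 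The same fact justifies the non-measurability of $|Y_{t-1}|$ with respect to $\mathcal{F}_{t-2}$, which both you and the paper invoke for $\bJ_s$. The paper uses this tacitly when it asserts that a singular $\bI_s$ yields $\lambda^\top Z_{st}(\bpsi_{0s})=0$ a.s. Under a looser reading of ``support'', the example above shows the lemma is false, so no argument from the listed assumptions alone can succeed.

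A smaller point concerns your existence step for $\bI_s$. The bare factor $\lambda_{1t}$ only has the moment of order $\tau$ supplied by \textbf{A2}. Hence H\"older with \textbf{A6} closes only if $1/(1+\varepsilon)+1/\tau<1$. What is really needed is $E(\sigma^2_{st})^{1+\delta}<\infty$ for some $\delta>0$; the paper's ``by \textbf{A6}'' is equally loose here.
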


\begin{proof} Fist note that
$$\frac{\partial \pi_t(\bphi)}{\partial \bphi}=\left(\begin{array}{c}1\\B_{t-1}\\\pi_{t-1}\end{array}\right)+b \frac{\partial \pi_{t-1}(\bphi)}{\partial \bphi}$$ is well defined and is bounded, thus integrable. It follows that $\bPi$ is well defined. Let us argue by contradiction by assuming that $\bPi$ is not invertible. Then there exists $\lambda=(\lambda_1,\lambda_2,\lambda_3)^\top\neq 0$ such that $\lambda^\top\frac{\partial \pi_t(\bphi_0)}{\partial \bphi}=0$ a.s. By stationarity, this implies $\lambda_1+\lambda_2B_t+\lambda_3\pi_t=0$ a.s. Because $\pi_t\in (0,1)$, $B_t$ is not ${\cal F}_{t-1}-$measurable and we necessarily have $\lambda_2=0$. Because $\pi_t$ is not constant under {\bf A3(ii)}, we also have $\lambda_3=0$, and finally $\lambda_1=0$, which is not possible. By contradiction, we thus have shown that $\bPi$ is invertible.  

Now we have
$$\frac{\partial \lambda_{st}(\bpsi_s)}{\partial \bpsi_s}=Z_{st}(\bpsi_s)
+\sum_{j=1}^p\beta_{sj} \frac{\partial \lambda_{s,t-j}(\bpsi_s)}{\partial \bpsi_s},$$ with $$Z_{st}(\bpsi_s)=\left(\begin{array}{ccccccc}1&|Y_{t-1}|&\cdots&|Y_{t-q}|&\lambda_{s,t-1}(\bpsi_s)&\cdots&\lambda_{s,t-p}(\bpsi_s)\end{array}\right)^\top.$$ 
Note that, in the ratio $\frac{\partial \lambda_{st}(\bpsi_s)}{\partial \bpsi_s}/\lambda_{st}(\bpsi_s)$, the random variables that appear in the numerator are also present in the denominator, under {\bf A5}. By the arguments used to show (7.54) in \cite{fz19}, we thus have
\begin{equation}\label{fillette}
E\sup_{\bpsi_{s}\in V(\bpsi_{0s})}\left\|\frac{1}{\lambda_{st}(\bpsi_s)}\frac{\partial \lambda_{st}(\bpsi_s)}{\partial \bpsi_s}\right\|^d+\left\|\frac{1}{\lambda_{st}(\bpsi_s)}\frac{\partial^2 \lambda_{st}(\bpsi_s)}{\partial \bpsi_s\partial \bpsi_s^\top}\right\|^d<\infty
\end{equation}
for any integer $d$ and some neighborhood $V(\bpsi_{0s})$ of $\bpsi_{0s}$.
Using also \eqref{blue}, it follows that $\bJ_1$ and $\bJ_2$ are well defined. By {\bf A6}, $\bI_1$ and $\bI_2$ are also well defined. If $\bJ_s$ or $\bI_s$ is not invertible, then there exists $\lambda=(\lambda_1,\cdots,\lambda_{p+q+1})^\top\neq 0$ such that $\lambda^\top Z_{st}(\bpsi_{0s})=0$ a.s.  Because $|Y_{t-1}|$ is not a measurable function of $\lambda_{s,t-1},\cdots,\lambda_{s,t-p}$, we have $\lambda_2=0$. If $\lambda_{q+1}\neq 0$, then $\lambda_{st}$ follows a relation of the form \eqref{3.2b} with $p$ and $q$ replaced by $p-1$ and $q-1$, which is impossible under {\bf A3(i)}.
Continuing in this way, we show that $\lambda=0$ and conclude by contradiction that $\bJ_s$ and $\bI_s$ are invertible.
\end{proof}
\begin{lem}
\textit{Under} \textit{\textbf{A1}-\textbf{A6}}%
\begin{equation*}
n^{\frac{1}{2}}\frac{\partial L_{n}\left( \btheta _{0}\right) }{\partial
\btheta }\underset{n\rightarrow \infty }{\overset{D}{\rightarrow }}\mathcal{N}%
\left( 0,\bI\right) 
\end{equation*}%
\textit{where} $\bI=diag(\bPi ,\bI_{1},\bI_{2})$.
\end{lem}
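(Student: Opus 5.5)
The plan is to apply a martingale central limit theorem to the score at $\btheta_0$. From the gradient formulas in the proof of Theorem~\ref{thm:est}, $\partial \ell_t(\btheta_0)/\partial\btheta=\bDelta_t\bxi_t$, where $\bDelta_t\in{\cal F}_{t-1}$ and
\[
\bxi_t^\top=\left\{B_t-\pi_t,\;B_t(X_{1t}-\lambda_{1t}),\;(1-B_t)(X_{2t}-\lambda_{2t})\right\}.
\]
For the $\bphi$-block I will rewrite $\mathbbm{1}_{Y_t\geq0}/\pi_t-\mathbbm{1}_{Y_t<0}/(1-\pi_t)=(B_t-\pi_t)/\{\pi_t(1-\pi_t)\}$. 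For the $\bpsi_2$-block I will use $Y_t=-X_{2t}$ on $\{Y_t<0\}$, so that $-(Y_t+1)/(\lambda_{2t}-1)-1=(X_{2t}-\lambda_{2t})/(\lambda_{2t}-1)$.

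\textbf{Step 1: martingale differences.} Conditional independence of $B_t$ and $X_{st}$ given ${\cal F}_{t-1}$, together with $E(X_{st}\mid{\cal F}_{t-1})=\lambda_{st}$, gives $E(\bxi_t\mid{\cal F}_{t-1})=0$ (this is \eqref{4.13}). Hence $\{\bDelta_t\bxi_t,{\cal F}_t\}$ is a martingale difference sequence, and it is stationary and ergodic because it is a measurable function of the stationary ergodic process of Proposition~\ref{Pro3.1}.

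\textbf{Step 2: conditional covariance.} The off-diagonal blocks vanish:
\begin{itemize}
\item $B_t(1-B_t)=0$, so the $\bpsi_1$--$\bpsi_2$ block is zero;
\item $E\{(B_t-\pi_t)B_t(X_{1t}-\lambda_{1t})\mid{\cal F}_{t-1}\}=E\{(B_t-\pi_t)B_t\mid{\cal F}_{t-1}\}\cdot 0=0$ by conditional independence, and similarly for the $\bphi$--$\bpsi_2$ block.
\end{itemize}
The diagonal blocks are computed as follows.
\begin{itemize}
\item The $\bphi$-block: $E\{(B_t-\pi_t)^2\mid{\cal F}_{t-1}\}=\pi_t(1-\pi_t)$, so its expectation equals $\bPi$.
\item The $\bpsi_1$-block: its expectation is $E\{\lambda_{1t}^{-2}B_t(X_{1t}-\lambda_{1t})^2\,\partial\lambda_{1t}/\partial\bpsi_1\,\partial\lambda_{1t}/\partial\bpsi_1^\top\}$. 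On $\{B_t=1\}$ we have $Y_t=X_{1t}$ and $B_t=\mathbbm{1}_{Y_t\geq0}$, so this is exactly $\bI_1$.
\item The $\bpsi_2$-block: on $\{Y_t<0\}$ we have $X_{2t}-\lambda_{2t}=-(Y_t+\lambda_{2t})$, which gives $\bI_2$.
\end{itemize}
Thus $E\{\bDelta_t\bxi_t\bxi_t^\top\bDelta_t^\top\}=\bI$.

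\textbf{Step 3: CLT.} I will verify that this second moment is finite and then invoke the Billingsley--Ibragimov CLT for stationary ergodic square-integrable martingale differences; no Lindeberg condition is needed beyond finiteness. The bound for the $\bphi$-block is immediate, since $\partial\pi_t/\partial\bphi$ is bounded (shown in the previous lemma) and $\pi_t(1-\pi_t)$ is bounded below by \eqref{fille}. The $\bpsi$-blocks are the main obstacle, because \textbf{A2} only gives $\tau>1$ moments, not second moments of $X_{st}$. Here I will condition on ${\cal F}_{t-1}$:
\[
E\{(X_{1t}-\lambda_{1t})^2\mid{\cal F}_{t-1}\}=\sigma_{1t}^2,
\]
so the $\bpsi_1$-block is bounded by
\[
E\left[\frac{\sigma_{1t}^2}{\lambda_{1t}}\cdot\frac{1}{\lambda_{1t}}\left\|\frac{\partial\lambda_{1t}}{\partial\bpsi_1}\right\|^2\right].
\]
I will then apply Hölder with exponents $1+\varepsilon$ and $(1+\varepsilon)/\varepsilon$. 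The first factor is finite by \textbf{A6}. For the second, I will use $\lambda_{1t}\geq\underline\kappa$ from \eqref{blue} and write $\lambda_{1t}^{-1}\|\partial\lambda_{1t}\|^2=\lambda_{1t}\,\|\lambda_{1t}^{-1}\partial\lambda_{1t}\|^2$. Then Cauchy--Schwarz combines \eqref{fillette}, which gives all moments of the normalized derivative, with the moments of $\lambda_{1t}$. I expect this step to be delicate: $\lambda_{1t}$ is only known to have a first moment by \eqref{moment1}. If that is insufficient, I will instead bound $\|\partial\lambda_{1t}/\partial\bpsi_1\|^2/\lambda_{1t}\le\|\partial\lambda_{1t}/\partial\bpsi_1\|\cdot\|\lambda_{1t}^{-1}\partial\lambda_{1t}/\partial\bpsi_1\|$, where the first factor is a geometric sum of lagged $|Y|$'s and $\lambda$'s. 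That bound requires a moment slightly above one for those terms, which I would obtain from \textbf{A2} and the linear recursion under \textbf{A1}. The $\bpsi_2$-block is handled identically, using $\lambda_{2t}-1\geq\underline\kappa$.

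Combining Steps 1--3 with the Cramér--Wold device yields $n^{1/2}\partial L_n(\btheta_0)/\partial\btheta\to\mathcal N(0,\bI)$.
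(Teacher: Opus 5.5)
Your Steps 1--2 and the appeal to Billingsley's CLT for stationary ergodic square-integrable martingale differences are the paper's proof. The paper simply notes that $\partial\ell_t(\btheta_0)/\partial\btheta=\bDelta_t\bxi_t$ is a square-integrable martingale difference with second-moment matrix $\mathrm{diag}(\bPi,\bI_1,\bI_2)$ and invokes Billingsley (1961). Your rewriting of the $\bphi$- and $\bpsi_2$-scores, the vanishing cross blocks, and the identification of the diagonal blocks with $\bPi$, $\bI_1$, $\bI_2$ are all correct, and they make explicit what the paper leaves implicit.

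The problem is Step 3. The paper does not carry this step out; it relies on the preceding lemma's assertion that $\bI_1,\bI_2$ exist ``by \textbf{A6}''. Your moment count is off. With H\"older exponents $1+\varepsilon$ and $(1+\varepsilon)/\varepsilon$ you need $\lambda_{1t}^{-1}\|\partial\lambda_{1t}/\partial\bpsi_1\|^2\in L^{1+1/\varepsilon}$. After your factorization, $\|\lambda_{1t}^{-1}\partial\lambda_{1t}/\partial\bpsi_1\|$ only enters through a further H\"older step. So you need $E\|\partial\lambda_{1t}/\partial\bpsi_1\|^{r}<\infty$ for some $r>1+1/\varepsilon$. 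Since $\partial\lambda_{1t}/\partial\alpha_{11}\geq|Y_{t-1}|$, that means moments of $|Y_t|$ of order above $2$ whenever $\varepsilon<1$, not ``slightly above one''.

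The most \textbf{A2} and \textbf{A6} give, via the three-factor bound $(\sigma_{1t}^2/\lambda_{1t})\cdot\|\partial\lambda_{1t}/\partial\bpsi_1\|\cdot\|\lambda_{1t}^{-1}\partial\lambda_{1t}/\partial\bpsi_1\|$, is finiteness when $1/(1+\varepsilon)+1/\tau<1$. That condition is not assumed.

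No sharper argument closes this under \textbf{A1}--\textbf{A6} alone. Take negative binomial components with dispersion $r$, so that $\sigma_{1t}^2\geq\lambda_{1t}^2/r$. Since $\pi_t\geq c_0$, the $\alpha_{11}$ diagonal entry of $\bI_1$ is at least $(c_0/r)EY_{t-1}^2$. But \textbf{A1}--\textbf{A6} only force moments of some order above one and do not exclude $EY_t^2=\infty$.

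To make Step 3 valid you must either add a hypothesis or take finiteness of $\bI$ as given, as the paper tacitly does. Possible hypotheses are $EX_{st}^{2+\delta}<\infty$ for some $\delta>0$, or $1/\tau+1/(1+\varepsilon)<1$. Under the first, $\sigma_{st}^2\in L^{1+\delta/2}$ combined with the all-order moments of $\lambda_{1t}^{-1}\partial\lambda_{1t}/\partial\bpsi_1$ suffices. With such a hypothesis, your argument is complete and coincides with the paper's.
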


\begin{proof}
Note that the sequence $\left\{ n^{\frac{1}{2}}\frac{%
\partial L_{n}\left( \btheta _{0}\right) }{\partial \btheta },t\in \mathbb{Z}%
\right\} $ is a square integrable $\mathcal{F}_{t}$-martingale with 
\begin{equation*}
n^{\frac{1}{2}}\frac{\partial L_{n}\left( \btheta _{0}\right) }{\partial
\btheta }=n^{-\frac{1}{2}}\sum_{t=1}^{n}\frac{\partial \ell_{t}\left( \btheta
_{0}\right) }{\partial \btheta }
\end{equation*}%
and $E\left( \frac{\partial \ell_{t}\left( \btheta _{0}\right) }{\partial
\btheta }\frac{\partial \ell_{t}\left( \btheta _{0}\right) }{\partial \btheta
^\top}\right) =diag(\bPi ,\bI_{1},\bI_{2})$. Hence, the result follows from
the central limit theorem of \cite{billingsley1961lindeberg} for square-integrable martingales.
\end{proof}

\begin{lem}\label{lem4.6}
\textit{Under} \textit{\textbf{A1}-\textbf{A5}}, if $\btheta_n\underset{n\rightarrow \infty }{\overset{a.s.}{%
\rightarrow }}\btheta_0$ then
\begin{equation*}
\frac{\partial ^{2}L_{n}\left( \btheta_n \right) }{\partial \btheta
\partial \btheta ^\top}\underset{n\rightarrow \infty }{\overset{a.s.}{%
\rightarrow }}-\bJ.
\end{equation*}%
\end{lem}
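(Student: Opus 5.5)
Write $H_t(\btheta):=\partial^2\ell_t(\btheta)/\partial\btheta\partial\btheta^\top$. The approach is a uniform law of large numbers on a shrinking neighbourhood of $\btheta_0$, followed by identification of the limit at $\btheta_0$.

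\textbf{Step 1 (domination).} Fix a small neighbourhood $V(\btheta_0)=V(\bphi_0)\times V(\bpsi_{01})\times V(\bpsi_{02})$ with closure inside the interior of $\bTheta$, which is possible by \textbf{A5}. I will show that $E\sup_{\btheta\in V(\btheta_0)}\|H_t(\btheta)\|<\infty$, treating the three diagonal blocks of the Hessian computed in the proof of Theorem~\ref{thm:est} separately; the off-diagonal blocks vanish.
\begin{itemize}
\item \emph{$\bphi$-block.} The first two derivatives of $\pi_t(\bphi)$ satisfy linear recursions driven by $(1,B_{t-1},\pi_{t-1})$ with contraction factor $b<1$. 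Hence they are bounded uniformly in $\bphi$. Together with $c\le\pi_t(\bphi)\le 1-\delta$ on $\bPhi$, which follows from \eqref{blue} and compactness, the whole block is bounded.
\item \emph{$\bpsi_s$-blocks.} Write $|Y_t|/\lambda_{st}(\bpsi_s)\le |Y_t|/\underline{\kappa}$ (using $\lambda_{2t}-1\ge\underline{\kappa}$), and bound the derivative ratios by \eqref{fillette}. The terms then look like $(1+|Y_t|)\,\|\lambda_{st}^{-1}\partial\lambda_{st}\|^2$ and $(1+|Y_t|)\,\|\lambda_{st}^{-1}\partial^2\lambda_{st}\|$.
\end{itemize}
By \textbf{A2} we have $Y_t\in L^\tau$ with $\tau>1$, and \eqref{fillette} holds for every $d$. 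Hölder's inequality with exponents $\tau$ and $\tau/(\tau-1)$ therefore gives integrability of the supremum.

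\textbf{Step 2 (ergodic ULLN).} $\{\sup_{\btheta\in V}\|H_t(\btheta)-H_t(\btheta_0)\|\}$ is stationary and ergodic, being a measurable function of $\{Y_u,\,u\le t\}$ and of $\{B_u\}$. Take balls $V_k$ of radius $1/k$ around $\btheta_0$. The ergodic theorem gives
\[
\limsup_n \sup_{\btheta\in V_k}\left\|\frac{1}{n}\sum_{t=1}^n \left\{H_t(\btheta)-H_t(\btheta_0)\right\}\right\|\le E\sup_{\btheta\in V_k}\|H_t(\btheta)-H_t(\btheta_0)\|.
\]
The right-hand side tends to $0$ as $k\to\infty$ by dominated convergence (Step 1) and continuity of $\btheta\mapsto H_t(\btheta)$. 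Since $\btheta_n\in V_k$ eventually, almost surely, it follows that $\partial^2 L_n(\btheta_n)/\partial\btheta\partial\btheta^\top-\partial^2L_n(\btheta_0)/\partial\btheta\partial\btheta^\top\to0$ a.s. Another application of the ergodic theorem gives $\partial^2L_n(\btheta_0)/\partial\btheta\partial\btheta^\top\to E H_t(\btheta_0)$ a.s.

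\textbf{Step 3 (identifying the limit).} Compute $EH_t(\btheta_0)$ by conditioning on $\mathcal F_{t-1}$ and using \eqref{4.13}.
\begin{itemize}
\item In the $\bphi$-block, the $\partial^2\pi_t$ terms cancel because $E(\mathbbm{1}_{Y_t\ge0}/\pi_t-\mathbbm{1}_{Y_t<0}/(1-\pi_t)\mid\mathcal F_{t-1})=0$. The outer-product terms give $-\{1/\pi_t+1/(1-\pi_t)\}\,\partial\pi_t\,\partial\pi_t^\top$, which equals $-\partial\pi_t\,\partial\pi_t^\top/\{\pi_t(1-\pi_t)\}$, with expectation $-\bPi$.
\item In the $\bpsi_1$-block, $E\{(Y_t/\lambda_{1t}-1)\mathbbm{1}_{Y_t\ge0}\mid\mathcal F_{t-1}\}=0$ kills the second-derivative term, leaving $-\bJ_1$ directly from the definition.
\item In the $\bpsi_2$-block, $E\{(Y_t+1+\lambda_{2t}-1)\mathbbm{1}_{Y_t<0}\mid\mathcal F_{t-1}\}=0$ does the same, leaving $-\bJ_2$.
\end{itemize}
Hence $EH_t(\btheta_0)=-\bJ$.

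\textbf{Main obstacle.} The delicate step is the domination in Step 1, specifically the cross term between $|Y_t|$ and the derivative ratios when only $\tau>1$ moments are available. I will rely on \eqref{fillette} holding for all orders $d$ on a neighbourhood; this is why \textbf{A5} is needed, since it keeps $\omega_s$ and the $\beta_{sj}$ bounded away from the boundary. With that in hand, Hölder's inequality closes the argument.
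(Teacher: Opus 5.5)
Your proof follows the paper's route. You apply the ergodic theorem on the shrinking balls $V_k(\btheta_0)$, build an integrable envelope for the Hessian from \textbf{A2}, \eqref{fillette} and H\"older, and finish with dominated convergence as $k\to\infty$. You also make explicit the identification $E\,\partial^2\ell_t(\btheta_0)/\partial\btheta\partial\btheta^\top=-\bJ$, which the paper leaves implicit. You correctly do without \textbf{A6}: the paper's proof cites it even though the lemma assumes only \textbf{A1}--\textbf{A5}, and conditional variances never enter the Hessian.

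Three details need repair.

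(i) In the $\bpsi_s$-blocks, the ``$-1$'' in $Y_t/\lambda_{st}-1$ contributes $\|\partial^2\lambda_{st}\|=\lambda_{st}\|\lambda_{st}^{-1}\partial^2\lambda_{st}\|$. So the correct envelope is $(|Y_t|+\lambda_{st})\|\lambda_{st}^{-1}\partial^2\lambda_{st}\|$, not $(1+|Y_t|)\|\lambda_{st}^{-1}\partial^2\lambda_{st}\|$. This is still integrable, because $\sup_V\lambda_{st}(\bpsi_s)\in L^\tau$ by \textbf{A2} and \eqref{4.2bis}. Keep $\lambda_{st}$ paired with $\partial^2\lambda_{st}$, though. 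The cruder bound $|Y_t|/\lambda_{st}\le|Y_t|/\underline{\kappa}$ leaves $|Y_t|\,\|\partial^2\lambda_{st}\|$, a product of two variables known only to lie in $L^\tau$, and H\"older does not control this when $\tau<2$.

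(ii) The bound $\pi_t(\bphi)\le1-\delta$ does not come from \eqref{blue}, which only bounds $c$ from below. On a neighbourhood of $\bphi_0$ it follows from $\pi_t(\bphi)\le(a+c)/(1-b)$ together with $a+b+c<1$ near $\bphi_0$.

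(iii) In Step 3 you reuse the $\bpsi_2$-block Hessian from the proof of Theorem~\ref{thm:est}, and that formula has a sign slip. Differentiating $-(Y_t+1)\log(\lambda_{2t}-1)$ twice gives $+\frac{Y_t+1}{(\lambda_{2t}-1)^2}\frac{\partial\lambda_{2t}}{\partial\bpsi_2}\frac{\partial\lambda_{2t}}{\partial\bpsi_2^\top}\mathbbm{1}_{Y_t<0}$. Since $E\{(Y_t+1)\mathbbm{1}_{Y_t<0}\mid\mathcal{F}_{t-1}\}=-(1-\pi_t)(\lambda_{2t}-1)$, the limit of that block is the negative definite matrix $-E\{(1-\pi_t)(\lambda_{2t}-1)^{-1}\frac{\partial\lambda_{2t}}{\partial\bpsi_2}\frac{\partial\lambda_{2t}}{\partial\bpsi_2^\top}\}$. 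This equals $-\bJ_2$ only if $\bJ_2$ is read with $X_{2t}-1=-(Y_t+1)$ in place of $Y_t+1$. As printed, $\bJ_2$ is negative semidefinite, and the same corrected reading is needed for the paper's remark that $\bI_2=\bJ_2$ in the shifted-Poisson case. The lemma holds with the intended $\bJ_2$, but you should recompute your Step 3 rather than copy it.
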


\begin{proof}
With notations of the proof of Lemma \ref{lem4.3}, in view of the stationarity and ergodicity of the
sequences 
\begin{equation*}
\left\{ \frac{\partial^{2}\ell_t\left( \btheta _{0}\right) }{%
\partial \theta _{i}\partial \theta _{j}} \right\}_t 
\quad\mbox{ and }\quad
\left\{ \sup_{\btheta \in V_{k}(\btheta _{0})}\left\vert \frac{%
\partial^{2} \ell_t\left( \btheta \right) }{\partial \theta _{i}\partial
\theta _{j}}-\frac{\partial^{2}\ell_t\left( \btheta _{0}\right) }{%
\partial \theta_{i}\partial \theta_{j}} \right\vert \right\}_t, 
\end{equation*}%
and the consistency of $\btheta_n$,  we have almost surely
\begin{equation*}
\lim_{n\to\infty}\left\vert \bJ\left( i,j\right) -\frac{\partial ^{2}L_{n}\left( \btheta_n\right) }{\partial \theta _{i}\partial \theta _{j}}\right\vert\leq E\sup_{\btheta \in 
V_{k}(\btheta _{0})}\left\vert \frac{\partial ^{2}\ell_{t}\left(
\btheta \right) }{\partial \theta _{i}\partial \theta _{j}}-\frac{%
\partial ^{2}\ell_{t}\left( \btheta _{0}\right) }{\partial \theta _{i}\partial
\theta _{j}}\right\vert \text{,}
\end{equation*}%
for all $k$. Using {\bf A2}, {\bf A6} and \eqref{fillette}, by the Hölder inequality we have $$E\sup_{\btheta \in 
V_{k}(\btheta _{0})}\left|\frac{\partial ^{2}\ell_{t}\left(
\btheta \right) }{\partial \theta _{i}\partial \theta _{j}}\right|<\infty.$$
The dominated convergence theorem then entails 
\begin{eqnarray*}
\lim_{k\rightarrow \infty }E\left( \sup_{\btheta \in V_{k}(\btheta _{0})}\left\vert \frac{\partial ^{2}\ell_{t}\left( \btheta \right) }{\partial
\theta _{i}\partial \theta _{j}}- \frac{\partial ^{2}\ell_{t}\left(
\btheta _{0}\right) }{\partial \theta _{i}\partial \theta _{j}}
\right\vert \right)  &=&E\left( \lim_{k\rightarrow \infty }\sup_{\btheta \in 
V_{k}(\btheta _{0})}\left\vert \frac{\partial ^{2}\ell_{t}\left(
\btheta \right) }{\partial \theta _{i}\partial \theta _{j}}- \frac{%
\partial ^{2}\ell_{t}\left( \btheta _{0}\right) }{\partial \theta _{i}\partial
\theta _{j}} \right\vert \right)  =0,
\end{eqnarray*}%
establishing the result.
\end{proof}

\begin{lem}\label{lemchaud}
If  $B_{t}$ and $X_{st}$ ($s=1,2$) are non degenerated and are conditionally
independent given $\mathcal{F}_{t-1}$,
the random variables $$\epsilon_t=\left\{X_{1t}-\lambda _{1t}\right\}B_t-\left\{X_{2t} - \lambda _{2t}\right\}(1-B_t)$$ and  $\bxi_t=(\xi_{1t},\xi_{2t},\xi_{3t})^\top$, defined in Theorem \ref{thm:est}, satisfy: 
\begin{equation}\label{demoncoeur}c_1\epsilon_t+\sum_{i=1}^3c_{i+1} \xi_{it}=0\;\mbox{ a.s.}\quad\Rightarrow\quad c_1=-c_3=c_4 \mbox{ and }c_2=0.
\end{equation} 
\end{lem}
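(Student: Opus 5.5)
The plan is to rewrite $\epsilon_t$ in terms of the components of $\bxi_t$, so that the hypothesis becomes a single linear relation among three centred terms. Then I will split on the value of $B_t$ and use conditional second moments given $\mathcal F_{t-1}$.

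First, by definition,
$$\epsilon_t=B_t(X_{1t}-\lambda_{1t})-(1-B_t)(X_{2t}-\lambda_{2t})=\xi_{2t}-\xi_{3t}.$$
Hence the relation $c_1\epsilon_t+\sum_{i=1}^3c_{i+1}\xi_{it}=0$ a.s.\ is equivalent to
$$c_2(B_t-\pi_t)+\gamma_1 B_t(X_{1t}-\lambda_{1t})+\gamma_2(1-B_t)(X_{2t}-\lambda_{2t})=0\quad\mbox{a.s.},$$
where $\gamma_1=c_1+c_3$ and $\gamma_2=c_4-c_1$. The claim $c_1=-c_3=c_4$, $c_2=0$ is exactly $\gamma_1=\gamma_2=c_2=0$.

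Next, multiply the relation by $B_t$, square it, and take the conditional expectation given $\mathcal F_{t-1}$. On $\{B_t=1\}$ the expression reduces to $c_2(1-\pi_t)+\gamma_1(X_{1t}-\lambda_{1t})$. Since $B_t$ and $X_{1t}$ are conditionally independent given $\mathcal F_{t-1}$ and $E(X_{1t}-\lambda_{1t}\mid\mathcal F_{t-1})=0$, the cross term vanishes. This gives
$$0=\pi_t\left\{c_2^2(1-\pi_t)^2+\gamma_1^2\sigma_{1t}^2\right\}\quad\mbox{a.s.},$$
where $\sigma_{1t}^2={\rm Var}(X_{1t}\mid\mathcal F_{t-1})$.

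Symmetrically, multiplying by $(1-B_t)$ gives
$$0=(1-\pi_t)\left\{c_2^2\pi_t^2+\gamma_2^2\sigma_{2t}^2\right\}\quad\mbox{a.s.}$$

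By \eqref{fille}, or simply because $B_t$ is non-degenerate in the Bernoulli INGARCH setting with $c>0$ and $a+b+c<1$, we have $\pi_t\in(0,1)$ a.s. The first identity then forces $c_2(1-\pi_t)=0$, hence $c_2=0$. It also forces $\gamma_1^2\sigma_{1t}^2=0$ a.s., and non-degeneracy of $X_{1t}$ gives $\sigma_{1t}^2>0$ on a set of positive probability, so $\gamma_1=0$. The second identity gives $\gamma_2=0$ in the same way.

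The main point needing care is the interpretation of ``non-degenerated''. It must mean that the conditional law of $X_{st}$ given $\mathcal F_{t-1}$ is not a point mass with positive probability, i.e.\ $P(\sigma_{st}^2>0)>0$. Unconditional non-degeneracy alone would not suffice, since $X_{st}$ could equal an $\mathcal F_{t-1}$-measurable quantity.

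I will also need to justify that the squared conditional expectations are well defined. This follows from $E X_{st}^2<\infty$ under {\bf A7}, or alternatively one can avoid second moments. In that alternative, conditional on $\mathcal F_{t-1}$ and $B_t=1$, if $\gamma_1\neq0$ then $X_{1t}=\lambda_{1t}-c_2(1-\pi_t)/\gamma_1$ would be $\mathcal F_{t-1}$-measurable, contradicting conditional non-degeneracy. I would present the variance argument and mention this alternative if moments are an issue.
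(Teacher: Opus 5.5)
Your proof is correct and is essentially the paper's argument. You write $\epsilon_t=\xi_{2t}-\xi_{3t}$, split on the value of $B_t$, and then use $\pi_t\in(0,1)$ together with the conditional non-degeneracy of $X_{1t}$ and $X_{2t}$; the paper phrases the latter as ``$X_{st}$ is not $\mathcal{F}_{t-1}$-measurable'', which is exactly your alternative argument. Your conditional second-moment computation makes explicit, via conditional independence, the passage from the events $\{B_t=1\}$ and $\{B_t=0\}$ to almost-sure identities, which the paper leaves implicit, and it gives $c_2=0$ directly from $\pi_t<1$.
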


\begin{proof}Since $B_t$ is not ${\cal F}_{t-1}$-measurable, the left-hand side of \eqref{demoncoeur} entails
$$c_1 X_{1t}-c_1\lambda _{1t}+c_2-c_2\pi_t+c_3X_{1t}-c_3\lambda_{1t}=0\;\mbox{ a.s.}$$ and
$$-c_1 X_{2t} + c_1\lambda _{2t}-c_2\pi_t+c_4X_{2t} -c_4\lambda_{2t}=0\;\mbox{ a.s.}$$
Since $X_{1t}$ and  $X_{2t}$ are not ${\cal F}_{t-1}$-measurable, it follows that $c_1=-c_3=c_4$, and
$$(c_1+c_3)\lambda _{1t}=c_2(1-\pi_t),\qquad (c_1-c_4)\lambda _{2t}=c_2\pi_t\;\mbox{ a.s.}$$
Since $\lambda_{st}$ is not $\mathcal{F}_{t-1}^{B}$-measurable this entails 
$c_1=-c_3$, $c_2=0$ and $c_1=c_4$. 
\end{proof}

\subsection{Proof of Theorem~\ref{thm:gof}}
Assume {\bf A1}-{\bf A7}.
First, we show the negligibility of initial values. Let
$$\widetilde\gamma_{h}(\btheta)=n^{-1}\sum_{t=1}^n\widetilde\epsilon_t(\btheta)\widetilde\epsilon_{t-h}(\btheta)\quad\mbox{ and }\quad\gamma_{h}(\btheta)=n^{-1}\sum_{t=1}^n\epsilon_t(\btheta)\epsilon_{t-h}(\btheta)$$ 
for $0\leq h<n$. 
By the arguments used to show (7.30) in \cite{fz19}, \eqref{4.2bis} entails the existence of a ${\cal F}_0$-measurable positive variable $K$ and a constant $\varrho\in [0,1)$ such that
\begin{equation}\label{etcruelle}
\sup_{\bpsi_{s}\in \bPsi_s}\left|\widetilde\lambda_{st}(\bpsi_s)-\lambda_{st}(\bpsi_s)\right|<K\varrho^t,\qquad \sup_{\btheta\in \bTheta}\left|\widetilde\epsilon_t(\btheta)-\epsilon_t(\btheta)\right|<K\varrho^t.
\end{equation}
Note also that {\bf A1}, {\bf A4} and {\bf A7} entail 
\begin{equation}\label{grace}
E \sup_{\btheta\in \bTheta}\left|\epsilon_t(\btheta)\right|^4<\infty.
\end{equation}
Similarly to Lemma~\ref{lem4.4}
we then have
\begin{equation}\label{paire}
\sqrt{n}\sup_{\btheta \in \bTheta }\left| \widetilde\gamma_{h}(\btheta)-\gamma_{h}(\btheta)\right| \underset{%
n\rightarrow \infty }{\overset{a.s.}{\rightarrow }}0.
\end{equation}

Using also the ergodic theorem, it follows that under $H_0$
$$\widehat\gamma_h \underset{%
n\rightarrow \infty }{\overset{a.s.}{\rightarrow }}\left\{\begin{array}{lll}E\epsilon_t^2&\mbox{if}&h=0\\ 0&\mbox{if}&h\neq 0.\end{array}\right.$$
To establish the asymptotic distribution of the test under $H_0$, it thus remains to  show  that
\begin{align}
\sqrt n 
\widehat\bgamma_{1:k}^\top
\underset{n\to\infty}{\overset{D}{\to}}{\cal N}\left(\bzero,(E\epsilon_t^2)^2\bV_0\right),\qquad 
\widehat{\bV}\overset{a.s.}{\underset{n\rightarrow \infty }{%
\rightarrow }} \bV_0\label{deCadix}
\end{align}
where $\widehat\bgamma_{1:k}=(\widehat\gamma_{1},\dots, \widehat\gamma_{k})^\top$.
By \eqref{paire} and a Taylor expansion,  we have
\begin{align*}
\sqrt n \widehat\gamma_{h}+o(1)=\sqrt{n}\gamma_{h}(\widehat{\btheta}_n)
=
\sqrt n \gamma_{h}(\btheta_0)
+
\frac{\partial}{\partial {\btheta}^\top} \gamma_{h}(\btheta)\Big|_{{\btheta}={\btheta}^*}
\sqrt n \skakko{\widehat{\btheta}_n-{\btheta}_0},
\end{align*}
where  ${\btheta}^*$ is between $\widehat{\btheta}_n$ and ${\btheta}_0$. 
It holds that
\begin{align*}
\frac{\partial}{\partial {\btheta}}\gamma_{h}(\btheta^*)
\to
\bm d_h
\quad \mbox{a.s.\ as $n\to\infty$},
\end{align*}
where 
$$\bd_h:=E\left\{\epsilon_t(\btheta_0)\frac{\partial}{\partial \btheta}\epsilon_{t + h}(\btheta_0)\right\}
\mbox{ is the }h\mbox{-th line of } \bD,
$$
by 
the strong consistency of $\widehat{\btheta}_n$, the ergodic theorem, Beppo-Levi's theorem, and the fact that 
$E\epsilon_{t}\frac{\partial}{\partial \btheta}
\epsilon_{t-h}(\btheta_0)=0$. 
The existence  of  $\bD$ and $\bE$ is guaranteed by {\bf A7}.
Let $\bgamma_{1:k}=\left(\gamma_{1}(\btheta_0),\dots,\gamma_{k}(\btheta_0)\right)^\top$.
Now, note that the central limit theorem for square integrable martingale differences and the derivations of the proof of Theorem~\ref{thm:est} entail
$$\sqrt{n}\left(\begin{array}{c}\bgamma_{1:k}\\\widehat\btheta_n-\btheta_0\end{array}\right)=\frac{1}{\sqrt n}\sum_{t=1}^n \left(\begin{array}{c}\epsilon_t\bepsilon_{t-1:t-k}\\\bJ^{-1}\frac{\partial}{\partial \btheta}\ell_t(\btheta_0)\end{array}\right)\underset{n\rightarrow \infty }{\overset{D}{\rightarrow }}\mathcal{N}%
\left\{ \bzero,\left(\begin{array}{cc}\bE&\bC\bJ^{-1}\\
\bJ^{-1}\bC^\top&\bSigma\end{array}\right)\right\} .$$
The first convergence in \eqref{deCadix} follows.
By the arguments used to show \eqref{paire} and Lemma~\ref{lem4.6}, we have $\widehat \bE\underset{n\rightarrow \infty }{\overset{a.s.}{%
\rightarrow }}\bE$ if $E\sup_{\btheta\in V(\btheta_0)}|\epsilon_t(\btheta)|^4<\infty$. This is entailed by {\bf A7} and the fact that, under \eqref{4.2bis}, there exist constants $K>0$ and $\varrho\in (0,1)$ such that $\lambda_{st}(\bpsi_s)\leq K\left(1+\sum_{i=1}^{\infty}\varrho^i|Y_{t-i}|\right)$ uniformly in $\bTheta$. The consistency of the other empirical estimators involved in $\widehat\bV$ is shown similarly. The second convergence in \eqref{deCadix} follows.

Now we show the invertibility of $\bV_0$.
Note that $\bV_0$  is the variance of 
$$\bv_t=\epsilon_t\bepsilon_{t-1:t-k}+\bD\bJ^{-1}\bDelta_t\bxi_t.$$
Let us argue by contradiction by assuming that $\bV_0$ is not invertible. Then there exists $\bmu=(\mu_1,\cdots,\mu_k)^\top\neq \bzero$ such that $\bmu^\top\bv_t=0$ a.s.
By Lemma \ref{lemchaud}, this entails that 
\begin{equation}\label{grosse}
\bmu^\top\bepsilon_{t-1:t-k}=-\bmu^\top\bD\bJ^{-1}\bP_2 \frac{1}{\lambda_{1t}}\frac{\partial \lambda_{1t}}{\partial\bpsi_1}=\bmu^\top\bD\bJ^{-1}\bP_3 \frac{1}{\lambda_{2t}-1}\frac{\partial \lambda_{2t}}{\partial\bpsi_1},\;\mbox{ a.s.}
\end{equation} and
$$\bmu^\top\bD\bJ^{-1}\bP_1 \frac{1}{\pi_t(1-\pi_t)}\frac{\partial \pi_{1}}{\partial\bphi}=\bzero,\;\mbox{ a.s.}$$
where $\bP_1$ is the $d\times 3$ matrix obtained by stacking the $3\times 3$ identity matrix and the $(d-3)\times 3$ zero matrix,  $\bP_2$ is the $d\times(p+q+1)$ matrix obtained by stacking the $3\times (p+q+1)$ zero matrix, the $(p+q+1)\times (p+q+1)$ identity matrix the $(p+q+1)\times (p+q+1)$ zero matrix, and $\bP_3$ is the $d\times(p+q+1)$ matrix obtained by stacking the $(d-p-q-1)\times (p+q+1)$ zero matrix  and the $(p+q+1)\times (p+q+1)$ identity matrix.

Note that $\epsilon_{t-1}=\left\{X_{1,t-1}-\lambda_{1,t-1}+X_{2,t-1}-\lambda_{2,t-1}\right\}B_{t-1}-X_{2,t-1}+\lambda_{2,t-1}$ and that
$\lambda_{st}$ and $\lambda_{st}^{-1}{\partial \lambda_{st}}/{\partial\bpsi_1}$  are measurable functions of (the sigma-field generated by) $\{|Y_i|, i<t\}$.
If $\mu_1\neq 0$, the first equality of \eqref{grosse} and {\bf A8} entail that $B_{t-1}$, {\em i.e.} the sign of $Y_{t-1}$, is a measurable function of $\{|Y_i|, i<t\}$ and 
$\{X_{s,t-1},\lambda_{s,t-1}, s=1,2\}$, which is not the case. Thus $\mu_1=0$. Similarly, we show that all the $\mu_i$'s are zero, hence the contradiction. 

The consistency is due to:
$n
\widehat\brho_{1:k}^\top
\widehat\bV^{-1}
\widehat\brho_{1:k}\to\infty$ a.s. under $H_1$.

\subsection{Proof of Theorem~\ref{thm:gof2}}

Let $E^*$, $\mbox{Var}^*$, $o_{a.s.}^*(1)$, $o_p^*(1)$, $O_p^*(1)$ be the expectation, variance, convergence to zero almost surely, convergence to zero in probability and bounded in probability conditional on $\{Y_t\}$.
For example, we have
$$E^*\widetilde\gamma_{h}^*\skakko{{\btheta}}:=E\left(\widetilde\gamma_{h}^*\skakko{{\btheta}}\mid \{Y_t\}\right)=\widetilde\gamma_{h}\skakko{{\btheta}},\qquad \mbox{Var}^*\left\{\widetilde\gamma_{h}^*\skakko{{\btheta}}\right\}=\frac{1}{n^2}\sum_{t=h+1}^n\widetilde\epsilon^2_t(\btheta)\widetilde\epsilon^2_{t-h}(\btheta).$$
We first state two elementary lemmas, whose proofs are provided for completeness.
\begin{lem}\label{lembrulant}
Let $(\bd_{t,n})$ and   $(\bs_{t,n})$ be two triangular arrays of real vectors and let $\bd$ be a vector,  such that  $$\lim_{n\to\infty}n^{-1}\sum_{t=1}^n\bd_{t,n}=\bd,\quad \limsup_{n\to\infty}n^{-1}\sum_{t=1}^n\bd_{t,n}^\top\bd_{t,n}<\infty\quad \mbox{ and }\quad \lim_{n\to\infty}\left\|n^{-1}\sum_{t=1}^n\bs_{t,n}\bs_{t,n}^\top\right\|=0.$$ We have
$$\frac{1}{n}\sum_{t=1}^nw_t^*\bd_{t,n}=\bd+o^*_P(1), \qquad \frac{1}{\sqrt{n}}\sum_{t=1}^n(w_t^*-1)\bs_{t,n}=o^*_P(1).$$
\end{lem}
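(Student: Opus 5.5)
The plan is to compute conditional first and second moments given the data, and then apply Chebyshev's inequality. Throughout, the arrays $(\bd_{t,n})$ and $(\bs_{t,n})$ are deterministic, or, in the intended use, measurable with respect to $\{Y_t\}$. The weights $w_t^*$ are i.i.d., independent of the data, and satisfy $E w_t^*=1$ and ${\rm Var}(w_t^*)=1$. Hence conditioning on $\{Y_t\}$ leaves the law of $(w_t^*)$ unchanged, and every conditional moment below is a simple deterministic sum.

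For the first claim, write
\[
\frac1n\sum_{t=1}^n w_t^*\bd_{t,n}=\frac1n\sum_{t=1}^n\bd_{t,n}+\frac1n\sum_{t=1}^n(w_t^*-1)\bd_{t,n}.
\]
The first term converges to $\bd$ by assumption. The second term has conditional mean zero. By independence of the $w_t^*$, its conditional covariance matrix equals $n^{-2}\sum_{t=1}^n\bd_{t,n}\bd_{t,n}^\top$. The trace of this matrix is $n^{-1}\cdot n^{-1}\sum_{t}\bd_{t,n}^\top\bd_{t,n}$. This is $O(1/n)$ by the limsup hypothesis, so it tends to zero. Chebyshev's inequality, applied componentwise under $P^*$, then shows that the second term is $o_P^*(1)$.

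For the second claim, the conditional mean is again zero. The conditional covariance matrix is
\[
{\rm Var}^*\Bigl(\frac{1}{\sqrt n}\sum_{t=1}^n(w_t^*-1)\bs_{t,n}\Bigr)=\frac1n\sum_{t=1}^n\bs_{t,n}\bs_{t,n}^\top,
\]
whose norm tends to zero by hypothesis. Since the diagonal entries are bounded by the norm, each component has vanishing conditional variance, and Chebyshev's inequality gives $o_P^*(1)$.

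There is no real obstacle here. The only point requiring care is that the cross terms vanish, which follows from independence and ${\rm Var}(w_t^*)=1$. When the arrays are data-dependent, the hypotheses must hold for almost every realization of $\{Y_t\}$, and then the conclusions hold for those realizations as well.
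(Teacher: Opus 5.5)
Your proof is correct and follows the paper's route: center the weights, note that $n^{-1}\sum_t (w_t^*-1)\bd_{t,n}$ has conditional mean zero and conditional variance $n^{-2}\sum_t \bd_{t,n}\bd_{t,n}^\top$ (and, for the second claim, variance $n^{-1}\sum_t \bs_{t,n}\bs_{t,n}^\top$), then conclude with Chebyshev's inequality. One small precision: the cross terms vanish because of independence together with $E w_t^*=1$, whereas ${\rm Var}(w_t^*)=1$ is what gives the diagonal terms.
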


\begin{proof}
The first result follows from
$$E^*\frac{1}{n}\sum_{t=1}^n(w_t^*-1)\bd_{t,n}=0,\qquad \mbox{Var}^*\frac{1}{n}\sum_{t=1}^n(w_t^*-1)\bd_{t,n}=\frac{1}{n^2}\sum_{t=1}^n\bd_{t,n}\bd_{t,n}^\top$$ and the second is obtained similarly.
\end{proof}

\begin{lem}\label{lemfroid}
Let $(\bs_{t})$ be a sequence of real vectors  such that $\lim_{n\to\infty}n^{-1}\sum_{t=1}^n\bs_t\bs_t^\top=\bS$ for some non-singular matrix $\bS$. We have
$$\frac{1}{\sqrt{n}}\sum_{t=1}^n(w_t^*-1)\bs_t\underset{n\rightarrow \infty }{\overset{D}{\rightarrow }}\mathcal{N}%
\left( 0,\bS\right). $$
\end{lem}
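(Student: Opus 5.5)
The natural route is the Cramér--Wold device combined with a Lindeberg central limit theorem for triangular arrays of independent (conditionally on the data) summands. Fix a vector $\bmu$ of the same dimension as $\bs_t$ with $\bmu\neq\bzero$, and set
$$T_n:=\frac{1}{\sqrt n}\sum_{t=1}^n(w_t^*-1)\bmu^\top\bs_t=\sum_{t=1}^n\zeta_{t,n},\qquad \zeta_{t,n}:=\frac{1}{\sqrt n}(w_t^*-1)\bmu^\top\bs_t .$$
The $\bs_t$ are deterministic, or are treated as fixed since we condition on $\{Y_t\}$. The $w_t^*$ are i.i.d.\ with mean one and variance one. Hence the $\zeta_{t,n}$, $1\le t\le n$, are independent and centered, and
$$s_n^2:=\sum_{t=1}^n E\zeta_{t,n}^2=\bmu^\top\Big(\frac1n\sum_{t=1}^n\bs_t\bs_t^\top\Big)\bmu\ \longrightarrow\ \bmu^\top\bS\bmu>0 .$$
Positivity here uses the non-singularity of $\bS$, which is symmetric and nonnegative as a limit of Gram matrices and therefore positive definite. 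It then suffices to show $T_n\to\mathcal N(0,\bmu^\top\bS\bmu)$ for every such $\bmu$; the case $\bmu=\bzero$ is trivial.

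The key step is the Lindeberg condition. For $\varepsilon>0$ we need
$$\sum_{t=1}^n E\big[\zeta_{t,n}^2\mathbbm{1}_{\{|\zeta_{t,n}|>\varepsilon\}}\big]=\frac1n\sum_{t=1}^n(\bmu^\top\bs_t)^2\,g\!\left(\frac{\varepsilon\sqrt n}{|\bmu^\top\bs_t|}\right)\to0,\qquad g(x):=E\big[(w_1^*-1)^2\mathbbm{1}_{\{|w_1^*-1|>x\}}\big].$$
The function $g$ is nonincreasing and $g(x)\to0$ as $x\to\infty$, because $w_1^*$ has a finite variance. It therefore suffices to show
$$m_n:=\max_{1\le t\le n}\frac{|\bs_t|}{\sqrt n}\to 0 .$$
Once that holds, every argument of $g$ is at least $\varepsilon/(\|\bmu\| m_n)\to\infty$. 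The whole sum is then bounded by $\frac1n\sum_t(\bmu^\top\bs_t)^2$, which is $O(1)$, times a quantity tending to zero.

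Proving $m_n\to0$ is the main (though elementary) obstacle, and I would deduce it from the convergence of the Cesàro means alone. Write $M_n:=\sum_{t=1}^n\|\bs_t\|^2$, so that $M_n/n\to\mathrm{tr}(\bS)$. Then
$$\frac{\|\bs_n\|^2}{n}=\frac{M_n}{n}-\frac{n-1}{n}\,\frac{M_{n-1}}{n-1}\to0 .$$
Given $\delta>0$, choose $N$ such that $\|\bs_t\|^2\le\delta t$ for all $t\ge N$. Then
$$\max_{t\le n}\frac{\|\bs_t\|^2}{n}\le\frac{\max_{t<N}\|\bs_t\|^2}{n}+\delta,$$
so $m_n^2\le 2\delta$ for $n$ large, which proves $m_n\to0$.

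The Lindeberg--Feller theorem then gives $T_n/s_n\to\mathcal N(0,1)$, hence $T_n\to\mathcal N(0,\bmu^\top\bS\bmu)$ by Slutsky's lemma. The Cramér--Wold device yields the stated vector convergence. In the bootstrap application the $\bs_t$ depend on the data; the argument above is applied pathwise, for almost every realization of $\{Y_t\}$ for which the Cesàro limit holds.
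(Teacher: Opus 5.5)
Your proof is correct and follows the same route as the paper: Cram\'er--Wold, then the Lindeberg CLT for the conditionally independent, centered array $(w_t^*-1)\bmu^\top\bs_t/\sqrt{n}$, with the Lindeberg condition coming from the uniform integrability of $(w_t^*-1)^2$. Your explicit derivation of $\max_{t\le n}\|\bs_t\|/\sqrt{n}\to 0$ from the convergence of the Ces\`aro means supplies the uniformity in $t\le n$ that the paper leaves implicit when it asserts the existence of $n_A$ and invokes the Ces\`aro mean theorem.
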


\begin{proof} Let $\blambda$ be an arbitrary non-zero vector of same size as $\bs_t$. 
By the Cramér-Wold device and Lindeberg's CLT for triangular arrays of independent and centered variables, the result follows by noting that 
\begin{equation*}
	\frac{1}{n}\sum_{t=1}^n\mbox{Var}^*\left\{(w_t^*-1)\blambda^{\top}\bs_t\right\}\rightarrow  \blambda^{\top}\bS\blambda>0 \quad\mbox{ as $n\to\infty$,}
\end{equation*}
and by showing that for all $\varepsilon>0$
\begin{equation}
	\label{CLTLindeberdCondition}
	\frac{1}{n}\sum_{t=1}^n E^*\left((w_t^*-1)^2\left\{\blambda^{\top}\bs_t\right\}^2 \mathbbm{1}_{\{|w_t^*-1||\blambda^{\top}\bs_t|\geq \sqrt{n}\varepsilon\}}\right)\rightarrow  0\quad\mbox{ as $n\to\infty$.}
\end{equation}
When $\blambda^{\top}\bs_t\neq 0$ we have
\begin{align*}
E^*\left((w_t^*-1)^2\left\{\blambda^{\top}\bs_t\right\}^2 \mathbbm{1}_{\{|w_t^*-1||\blambda^{\top}\bs_t|\geq \sqrt{n}\varepsilon\}}\right)
	=& \left\{\blambda^{\top}\bs_t\right\}^2
	E^*\left(\left|w^*_t-1\right|^2\mathbbm{1}_{\left\{\left|w^*_t-1\right|\geq \frac{\sqrt{n}\varepsilon}{\left|\blambda^{\top}\bs_t\right|}\right\}}\right).
\end{align*}
For any $A>0$, there exists $n_A$ such that if $n>n_A$, then the expectation on the right-hand side of the previous equality is bounded by
$$\int_{\left|\omega-1\right|\geq A} \left|\omega-1\right|^2dP_{w_t^*}(\omega),$$
which is arbitrarily small when $A$ is sufficiently large.
We then obtain  \eqref{CLTLindeberdCondition} by the Ces\`aro mean theorem. 
\end{proof}

We now come back to the proof of Theorem~\ref{thm:gof2}.
Let
$$\widetilde\gamma_{h}^*(\btheta)=n^{-1}\sum_{t=1}^n w_t^*\widetilde\epsilon_t(\btheta)\widetilde\epsilon_{t-h}(\btheta)\quad\mbox{ and }\quad\gamma_{h}^*(\btheta)=n^{-1}\sum_{t=1}^n w_t^*\epsilon_t(\btheta)\epsilon_{t-h}(\btheta).$$ 
Using \eqref{etcruelle}  we have
$$\sup_{\btheta \in \bTheta }\left| \widetilde\gamma_{h}^*(\btheta)-\gamma_{h}^*(\btheta)\right|\leq \frac{1}{n}\sum_{t=1}^n w_t^*K_t\varrho^t,\quad K_t=K\sup_{\btheta \in \bTheta }\left(|\epsilon_t(\btheta)|+|\epsilon_{t-h}(\btheta)|\right)$$
Using also \eqref{grace} we have
\begin{equation*}
\sqrt{n}\sup_{\btheta \in \bTheta }\left| \widetilde\gamma_{h}^*(\btheta)-\gamma_{h}^*(\btheta)\right|=o_{a.s.}^*(1)\quad \mbox{ as }
n\rightarrow \infty.
\end{equation*}
A Taylor expansion of $\gamma_{h}^*(\cdot)$ about $\widehat{\btheta}^*_n=\widehat{\btheta}_n$ then gives
\begin{align*}
\sqrt n\left( \widehat\gamma^*_{h}-\widehat\gamma_{h}\right)=&\sqrt{n}\left\{\gamma^*_{h}(\widehat{\btheta}^*_n)-\gamma_{h}(\widehat{\btheta}_n)\right\}+o_{a.s.}^*(1)
\\=&
\sqrt n \left\{\gamma^*_{h}(\widehat\btheta_n)-\gamma_{h}(\widehat\btheta_n)\right\}
+
\frac{\partial}{\partial {\btheta}^\top} \gamma^*_{h}(\btheta)\Big|_{{\btheta}={\btheta}^*}
\sqrt n \skakko{\widehat{\btheta}^*_n-\widehat\btheta_n}+o_{a.s.}^*(1),
\end{align*}
where  ${\btheta}^*$ is between $\widehat\btheta_n$ and $\widehat{\btheta}^*_n$. 
In view of \eqref{grace}, for all $\varepsilon>0$, there exists a neighborhood $V(\btheta_0)$ of $\btheta_0$ such that 
$$\lim_{n\to\infty}\frac{1}{n}\sum_{t=1}^n\sup_{\btheta\in V(\btheta_0)}\left|\epsilon_t(\btheta)\epsilon_{t-h}(\btheta)-\epsilon_t\epsilon_{t-h}\right|^2= E\sup_{\btheta\in V(\btheta_0)}\left|\epsilon_t(\btheta)\epsilon_{t-h}(\btheta)-\epsilon_t\epsilon_{t-h}\right|^2<\varepsilon.$$
In view of the consistency of $\widehat\btheta_n$, we thus have
$$\lim_{n\to\infty}\frac{1}{n}\sum_{t=1}^n\left|\epsilon_t(\widehat\btheta_n)\epsilon_{t-h}(\widehat\btheta_n)-\epsilon_t\epsilon_{t-h}\right|^2= 0,$$
and similarly
$$\lim_{n\to\infty}\frac{1}{n}\sum_{t=1}^n\bd_{t,n}^\top\bd_{t,n}=0,\quad  \bd_{t,n}:=\frac{\partial}{\partial {\btheta}}\left\{\epsilon_t(\btheta^*)\epsilon_{t-h}(\btheta^*)\right\}-\frac{\partial}{\partial {\btheta}}\left\{\epsilon_t(\btheta_0)\epsilon_{t-h}(\btheta_0)\right\}.$$
By Lemma \ref{lembrulant}, it follows
$$\sqrt n \left\{\gamma^*_{h}(\widehat\btheta_n)-\gamma_{h}(\widehat\btheta_n)\right\}=\frac{1}{\sqrt{n}}\sum_{t=1}^n( w_t^*-1)\epsilon_t\epsilon_{t-h}+o^*_P(1),\quad 
\frac{\partial}{\partial {\btheta}}\gamma^*_{h}(\btheta^*)
= \bd_h+o^*_P(1).$$
Now we conclude as in the proof of Theorem~\ref{thm:gof}, noting that 
$$\sqrt{n}\left(\widehat{\btheta}_n^*-\widehat{\btheta}_n\right)
= 
\bJ^{-1} 
\frac{1}{\sqrt{n}}\sum_{t=1}^n
\skakko{w_t^*-1}\frac{\partial}{\partial \btheta} {\ell}_t\skakko{\btheta_0}+o_P^*(1)$$
and, by Lemma \ref{lemfroid}, conditional on $\{Y_t\}$,
$$\frac{1}{\sqrt n}\sum_{t=1}^n (w_t^*-1) \left(\begin{array}{c}\epsilon_t\bepsilon_{t-1:t-k}\\\bJ^{-1}\frac{\partial}{\partial \btheta}\ell_t(\btheta_0)\end{array}\right)\underset{n\rightarrow \infty }{\overset{D}{\rightarrow }}\mathcal{N}%
\left\{ \bzero,\left(\begin{array}{cc}\bE&\bC\bJ^{-1}\\
\bJ^{-1}\bC^\top&\bSigma\end{array}\right)\right\}.$$

\end{document}